\documentclass[10pt,reqno]{amsart}
\usepackage{amsmath,amssymb,amsfonts,latexsym,amsthm}

\usepackage{cite} 

\usepackage{mathrsfs}

\usepackage{stmaryrd}

\numberwithin{equation}{section}

\usepackage{graphicx,subfigure}
\usepackage{xcolor}
\usepackage{bm}

\usepackage[colorlinks=true,urlcolor=blue,
citecolor=red,linkcolor=blue,linktocpage,pdfpagelabels,
bookmarksnumbered,bookmarksopen]{hyperref}


\usepackage{ifthen} 

%
%
%


%
%





\theoremstyle{plain}

\newtheorem{lemma}{Lemma}[section]
\newtheorem{theorem}[lemma]{Theorem}

\newtheorem{proposition}[lemma]{Proposition}

\theoremstyle{definition}

\newtheorem{remark}[lemma]{Remark}
\newtheorem{definition}[lemma]{Definition}
\newtheorem{assumption}[lemma]{Assumption}

\theoremstyle{remark}




\newcommand{\Id}{\mathbb{I}}
\newcommand{\Idd}{\mathrm{Id}}

\newcommand{\bv}{\mathbf{v}}

\newcommand{\A}{\mathbf{A}}

\newcommand{\bw}{\mathbf{w}}

\newcommand{\R}{\mathbb{R}}
\newcommand{\C}{\mathbb{C}}
\newcommand{\Z}{\mathbb{Z}}
\newcommand{\N}{\mathbb{N}}

\newcommand{\cT}{{\mathcal{T}}}

\newcommand{\cL}{{\mathcal{L}}}
\newcommand{\cD}{{\mathcal{D}}}
\newcommand{\cR}{{\mathcal{R}}}
\newcommand{\cS}{{\mathcal{S}}}

\newcommand{\cA}{{\mathcal{A}}}

\newcommand{\cJ}{{\mathcal{J}}}

\newcommand{\ccC}{\mathscr{C}}
\newcommand{\ccB}{\mathscr{B}}
\newcommand{\ccM}{\mathscr{M}}

\renewcommand{\Re}{\mathrm{Re}\,} 
\renewcommand{\Im}{\mathrm{Im}\,}

\newcommand{\ind}{\mathrm{ind}\,}

\newcommand{\nul}{\mathrm{nul}\,}
\newcommand{\codim}{\mathrm{codim}\,}

%

\newcommand{\bbc}{\overline{c}}

\newcommand{\ess}{\sigma_\mathrm{\tiny{ess}}}
\newcommand{\ptsp}{\sigma_\mathrm{\tiny{pt}}}

\newcommand{\iptsp}{\widetilde{\sigma}_\mathrm{\tiny{pt}}}

\newcommand{\sppi}{\sigma_\pi}
\newcommand{\spd}{\sigma_\delta}

\newcommand{\<}{\langle}
\renewcommand{\>}{\rangle}



\begin{document}

\title[Spectral stability of diffusion-degenerate Nagumo fronts]{Spectral stability of monotone traveling 
fronts for reaction diffusion-degenerate Nagumo equations}

\author[J. F. Leyva]{J. Francisco Leyva}
 
\address{{\rm (J. F. Leyva)} Facultad de Ciencias de la Computaci\'{o}n\\Benem\'{e}rita Universidad Aut\'{o}\-no\-ma de Puebla\\Av. San Claudio y 14 Sur, Ciudad Universitaria, C.P. 72570, Puebla, Puebla (Mexico)}

\email{jfleyva.84@gmail.com}

\author[L. F. L\'{o}pez R\'{\i}os]{Luis F. L\'{o}pez R\'{\i}os}
 
\address{{\rm (L. F. L\'{o}pez R\'{\i}os)} Instituto de 
Investigaciones en Matem\'aticas Aplicadas y en Sistemas\\Universidad Nacional Aut\'onoma de 
M\'exico\\Circuito Escolar s/n, Ciudad Universitaria, C.P. 04510, Cd. de M\'{e}xico (Mexico)}

\email{luis.lopez@iimas.unam.mx}

\author[R. G. Plaza]{Ram\'on G. Plaza}

\address{{\rm (R. G. Plaza)} Instituto de 
Investigaciones en Matem\'{a}ticas Aplicadas y en Sistemas\\Universidad Nacional Aut\'onoma de 
M\'exico\\Circuito Escolar s/n, Ciudad Universitaria, C.P. 04510, Cd. de M\'{e}xico (Mexico)}

\email{plaza@mym.iimas.unam.mx}

\begin{abstract}
This paper establishes the spectral stability of monotone traveling front solutions for reaction-diffusion equations where the reaction function is of Nagumo (or bistable) type and with diffusivities which are density dependent and degenerate at zero (one of the equilibrium points of the reaction). Spectral stability is understood as the property that the spectrum of the linearized operator around the wave, acting on an exponentially weighted space, is contained in the complex half plane with non-positive real part. The degenerate fronts studied in this paper travel with positive speed above a threshold value and connect the (diffusion-degenerate) zero state with the unstable equilibrium point of the reaction function. In this case, the degeneracy of the diffusion coefficient is responsible of the loss of hyperbolicity of the asymptotic coefficient matrices of the spectral problem at one of the end points, precluding the application of standard techniques to locate the essential spectrum. This difficulty is overcome with a suitable partition of the spectrum, a generalized convergence of operators technique, the analysis of singular (or Weyl) sequences and the use of energy estimates. The monotonicity of the fronts, as well as detailed descriptions of the decay structure of eigenfunctions on a case by case basis, are key ingredients to show that all traveling fronts under consideration are spectrally stable in a suitably chosen exponentially weighted $L^2$ energy space.
\end{abstract}

\keywords{Nonlinear degenerate diffusion, monotone traveling fronts, Nagumo reaction-diffusion equations, spectral stability, bistable reaction function.}

\subjclass[2010]{35K57, 35B40, 35K65, 35B35}

\maketitle

\setcounter{tocdepth}{1}

\tableofcontents

\section{Introduction}

This paper studies the stability of monotone traveling fronts for diffusion-dege\-ne\-rate Nagumo equations of the form
\begin{equation}
\label{degRD}
 u_t = (D(u)u_x)_x + f(u),
\end{equation}
where $u = u(x,t) \in \R$, $x \in \R$, $t > 0$, the reaction function is of \emph{Nagumo (or bistable) type} \cite{NAY62,McKe70} and the diffusion coefficient is \emph{degenerate} at $u = 0$, that is, $D(0) = 0$. More precisely, we assume that 
\begin{equation}
\label{hypD}
\begin{aligned}
& D(0) = 0, \;\; \, D(u) > 0 \; \; \text{for all} \, u \in (0,1],\\
& D \in C^4([0,1];\R) \;\; \text{with} \; D'(u) > 0 \; \text{for all} \; u \in [0,1].
\end{aligned}
\end{equation}
As an example we have the quadratic function 
\begin{equation}
\label{Dbeta}
 D(u) = u^2 + b u,
\end{equation}
for some constant $b > 0$, proposed by Shigesada \cite{Shig80,SKT79} to model dispersive forces due to mutual interferences between individuals of an animal population.

The reaction function $f : \R \to \R$ is supposed to be smooth enough and to have two stable 
equilibria at $u=0, u=1$, and one unstable equilibrium point at $u = \alpha \in (0,1)$, that is,
\begin{equation}
\label{bistablef}
	\begin{aligned}
	&f \in C^3([0,1];\R), &\qquad &f(0)=f(\alpha)=f(1)=0,\\
	&f'(0), f'(1)<0,
		&\qquad  &f'(\alpha)>0,\\
	&f(u)>0 \, \textrm{ for all } \, u \in(\alpha,1),
		&\qquad &f(u)<0 \, \textrm{ for all } \, u \in (0,\alpha),\\
	\end{aligned}
\end{equation}
for a certain $\alpha \in (0,1)$. A well-known example is the widely used cubic polynomial
\begin{equation}\label{cubicf}
	f(u)= u(1-u)(u-\alpha),
\end{equation}
with $\alpha \in (0,1)$.

Scalar reaction-diffusion equations with a bistable reaction function appear in different contexts (see, e.g., \cite{FiM77,MeSc04a,AlCa79,McKe70,NAY62} and the many references therein). Following McKean \cite{McKe70} and for simplicity in the nomeclature, in this work we call it the \emph{Nagumo reaction diffusion equation}. In terms of continuous models of the spread of biological populations, for example, reaction functions of Nagumo type
often describe kinetics exhibiting positive growth rate for population densities over a threshold 
value ($u > \alpha$), and decay for densities below such value ($u < \alpha$). The former is often 
described as the \textit{Allee effect} \cite{MurI3ed}. The function $f$, which underlies two competing stable states, $u=0$ and $u=1$, can also be interpreted as the derivative of a double-well
potential, $F(u) = - \int^u f(s) \, ds$, $F(0) = 0$, with wells centered at those states.

In all these contexts, traveling front solutions play a prominent role. There is a vast literature on the analysis of reaction-diffusion fronts which we will not review. In early works the diffusion coefficient (or motility) was considered as a positive constant. It is now clear that, in many situations in physics and biology, the diffusion coefficient must be a function of the density. In particular, a density-dependent nonlinear diffusion coefficient is called \emph{degenerate} if it approaches zero when the density tends to certain equilibrium points of the reaction (typically zero). In terms of population dynamics, zero diffusion for zero densities is tantamount to requiring no motility in regions of space where the population is very scarce or near absent. In this paper, under hypotheses \eqref{hypD}, we assume that the nonlinear diffusion function $D = D(u)$ vanishes only at $u = 0$; in particular, the condition that $D > 0$ for $u > 0$ means that populations tend to ``avoid crowds" (cf. \cite{Aron85}). Models with degenerate diffusivities are also endowed with interesting mathematical properties. Among the new features one finds the emergence of traveling waves of ``sharp" type (see, for example, \cite{SaMa94a,SaMa97,Sh10}) and, notably, that solutions may exhibit finite speed of propagation of initial disturbances, in contrast with the strictly parabolic case (see, e.g., \cite{GiKe96}). In chemical engineering, the very well-known choice $D(u) = m u^{m-1}$, $m \geq 1$, is often used to model diffusion in porous media (see \cite{Muskat37,Vaz07} and the references therein).

The existence of fronts for reaction-diffu\-sion equations with degenerate diffusion was first studied for very specific forms of the nonlinear diffusion function (for an abridged list of references see \cite{Aron85,ARR81,New80,New83}), particularly in the case of a ``porous medium'' type of diffusion function. In the case of generic degenerate diffusion functions satisfying hypotheses \eqref{hypD}, Sanchez-Gardu\~no and Maini proved the existence of traveling fronts for kinetics of Fisher-KPP (also known as monostable) type \cite{SaMa94a,SaMa95} and of Nagumo (bistable) type \cite{SaMa97}. The authors apply a dynamical systems approach to establish the existence of heteroclinic connections. In the Fisher-KPP model with degenerate diffusion, for example, the degeneracy is responsible for the emergence of sharp traveling solutions for a particular value of the speed and a whole family of smooth traveling fronts for any larger speed (for details, see \cite{SaMa95}). These orbits connect the only two equilibrium points of the Fisher-KPP reaction (namely, $u = 0$ and $u = 1$). In the Nagumo case, the dynamics is much richer, due to both the degeneracy of the diffusion and the presence of a third (unstable) equilibrium point $u = \alpha$. This leads to a wider range of possible homoclinic and heteroclinic connections compared to those of the Fisher-KPP case. The authors in \cite{SaMa97} prove the existence of a unique positive wave speed, $c_* \in (0, \bbc(\alpha))$, where $\bbc(\alpha) := 2 \sqrt{D(\alpha)f'(\alpha)}$, associated to fronts of sharp type. Depending on conditions relating $D$ with $f$, there exists a continuum of monotone fronts, pulses and oscillatory waves. We are interested in  \emph{monotone and smooth} (non-sharp) traveling fronts which can be classified into three types (see Theorem 1 in \cite{SaMa97} or Proposition \ref{propstructure} below):
\begin{itemize}
\item[(I)] \emph{diffusion-degenerate traveling fronts}: for each speed value above a threshold speed, $c > \bbc(\alpha) > 0$, there exists a diffusion-degenerate, monotone increasing front connecting $u = 0$ with $u = \alpha$, traveling with that speed $c$;
\item[(II)] \emph{non-degenerate traveling fronts}: for each value $c > \bbc(\alpha)$ there exists a non-degenerate, monotone decreasing front connecting $u = 1$ with $u = \alpha$; and,
\item[(III)] \emph{stationary diffusion-degenerate fronts}: if $\int_0^1 D(u) f(u) \, du = 0$ then there exists a unique stationary monotone increasing front (with speed $c = 0$) connecting $u = 0$ with $u = 1$ (the two stable equilibria of the reaction), and a unique stationary, monotone decreasing front connecting $u = 1$ with $u = 0$.
\end{itemize}

This paper addresses the stability properties of the first type of waves described above. Our analysis establishes the first step of a general stability program: the property of \emph{spectral stability}, which is based on the analysis of the spectrum of the linearized differential operator around the wave. Spectral stability can be formally defined as the ``well-behavior" of the linearized operator around the traveling front, in the sense that there are no eigenvalues with positive real part which could render exponentially growing-in-time solutions to the linear problem (for the precise statement, see Definition \ref{defspecstab} below). The degeneracy of the diffusion coefficient, however, is responsible of some technical difficulties even at the spectral level. It is to be observed that the spectral stability of diffusion-degenerate Fisher-KPP fronts was already studied in a companion paper \cite{LeP20}. There are new aspects, however, which are particular to the Nagumo case. Here we summarize the contributions of this work:
\begin{itemize}
\item Due to the degeneracy of the diffusion coefficient at $u=0$, one of the end points of the front, the corresponding asymptotic coefficient matrix (when the spectral problem is written as a first order system) ceases to be hyperbolic. This prevents us to apply standard results (basically, the relation between hyperbolicity, exponential dichotomies and the Fredholm borders location \cite{AGJ90,KaPro13,San02}) to control the essential spectrum. For that purpose, we use the particular partition of spectrum introduced in \cite{LeP20} to deal with degenerate problems: instead of the standard  Weyl partition into essential and point spectra, we write the spectrum as the union of three disjoint components, $\sigma = \ptsp \cup \sppi \cup \spd$ (see Defintion \ref{defspecd} below). 
\item We generalize the method in \cite{LeP20} in order to show that there is convergence in the generalized sense of a family of operators (parabolic regularizations of the linearized operator around the front) when the regularization parameter tends to zero. The new partition of spectrum sorts out the complex values $\lambda$ for which the operator $\cL - \lambda$ has closed range or not. Thus, this technique only allows to control a subset of the essential spectrum characterized by this closed range property, namely $\spd$.
\item To locate points in a subset, $\sppi$, of the compressed spectrum (the range is no longer closed) we analyze the behavior of singular (or Weyl) sequences. Appropriately chosen exponential weights allow to control both the ``closed range" points of the essential spectrum and the singular sequences associated to complex values in $\sppi$.
\item These exponential weights allow us, in turn, to perform energy estimates on spectral equations that help to locate the \emph{isolated} point spectrum, $\ptsp$. The estimates are possible thanks to the monotonicity of the fronts and to the fact that the exponential weights imply that a certain transformation remains in the space domain of the operator.
\end{itemize}

It is important to remark that we restrict ourselves to the analysis of degenerate fronts of type I only because, (a), the spectral stability study of non-degenerate fronts is quite standard from the technical viewpoint, inasmuch as it reduces to a well-known spectral problem of Sturm-Liouville type (see, e.g., \cite{BCS20}); and, (b), the spectral stability of monotone stationary fronts can be easily addressed with the techniques introduced here: since $c=0$ there is no need of exponentially weighted spaces and the energy estimates considerably reduce to a particular case of the analysis presented here. The \emph{nonlinear} stability of such stationary fronts, however, will be addressed in a companion paper \cite{FLP}.

\subsection{Main result}

Let us now state the main result of this paper, which guarantees the spectral stability in a suitably chosen exponentially weighted energy $L^2$-space of all fronts in the family of degenerate fronts connecting $u = 0$ with the unstable equilibrium $u = \alpha$.

\begin{theorem}[spectral stability of degenerate non-stationary monotone Nagumo fronts]
\label{mainthmNd}
Un\-der hypotheses \eqref{hypD} and \eqref{bistablef}, the family of all monotone diffusion-degenerate Nagumo fronts connecting the equilibrium states $u = \alpha$ with $u = 0$ and traveling with speed $c > \bbc(\alpha)$ are spectrally stable in an exponentially weighted energy space $L^2_a = \{ e^{ax} u \in L^2\}$. More precisely, there exists $a > 0$ such that
\[
\sigma(\cL)_{|L^2_a} \subset \{ \lambda \in \C \, : \, \Re \lambda \leq 0\},
\]
where $\cL : L^2_a \to L^2_a$ denotes the linearized operator around the traveling front and $\sigma(\cL)_{|L^2_a}$ denotes its spectrum computed with respect to the energy space $L_a^2$.
\end{theorem}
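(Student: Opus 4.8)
The plan is to adapt to case \textbf{[Nd]} the same three-stage strategy---essential spectrum, compressed spectrum, point spectrum---used for the previous two theorems, the new features being that the front $\varphi$ now limits on the \emph{unstable} equilibrium $u=\alpha$ at one end and on the \emph{degenerate} equilibrium $u=0$ at the other. First I would pass to the traveling coordinate $\xi=x-ct$ and write the linearization of \eqref{degRD} about the front in non-divergence form,
\[
\cL v = D(\varphi)v'' + \big(2D'(\varphi)\varphi'+c\big)v' + \big(D''(\varphi)(\varphi')^2 + D'(\varphi)\varphi'' + f'(\varphi)\big)v ,
\]
recalling from the existence theory \cite{SaMa97} that $\varphi$ is smooth, strictly monotone ($\varphi'>0$), with $\varphi\to\alpha$ as $\xi\to+\infty$ and $\varphi\to0$ (hence $D(\varphi)\to D(0)=0$ by \eqref{hypD}) as $\xi\to-\infty$. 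Since $f'(\alpha)>0$, the essential spectrum generated by the non-degenerate end protrudes into $\Re\lambda>0$ in plain $L^2$, so the first task is to fix the exponential weight. Conjugating $\cL$ by $e^{a\xi}$, the asymptotic symbol at $+\infty$ becomes $\lambda = D(\alpha)(ik-a)^2 + c(ik-a)+f'(\alpha)$, whose real part is maximal at $k=0$ and equals $D(\alpha)a^2 - ca + f'(\alpha)$; this is strictly negative for a suitable $a>0$ exactly when $c>2\sqrt{D(\alpha)f'(\alpha)}$, i.e. when $c>\bbc(\alpha)$. Fixing such an $a$ drives the non-degenerate Fredholm borders strictly into the open left half-plane.

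The second stage is to locate the essential spectrum, where the degeneracy at $\xi\to-\infty$ is the obstruction: since $D(\varphi)\to0$ the asymptotic coefficient matrix at that end is not hyperbolic, so no exponential dichotomy is available and the Fredholm borders cannot be read off in the usual way. I would therefore use the partition $\sigma=\sppi\cup\spd$ of \cite{LeP19}, separating those $\lambda$ for which $\cL-\lambda$ has closed range from the compressed set where it does not. For the closed-range part $\sppi$ I would introduce the parabolic regularizations $\cL^\epsilon$, obtained by replacing $D(\varphi)$ with $D(\varphi)+\epsilon$: these are uniformly parabolic, their essential spectra are governed by the standard hyperbolic theory and lie in $\{\Re\lambda\le0\}$ thanks to the weight, and I would prove convergence in the generalized sense as $\epsilon\to0^+$ in order to transfer this bound to $\cL$. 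For the compressed part $\spd$ I would instead construct and estimate singular (Weyl) sequences directly, showing that in $L^2_a$ none exists when $\Re\lambda>0$; together these steps confine $\ess$ to the closed left half-plane.

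The third stage handles the point spectrum by an energy estimate exploiting monotonicity. Translation invariance gives $\cL\varphi'=0$, and since $\varphi'>0$ nowhere vanishes it serves as a ground state: the substitution $v=\varphi' w$ annihilates the zeroth-order coefficient and reduces $\cL v=\lambda v$ to Sturm--Liouville form $(\rho D(\varphi)w')'=\lambda\,\rho\,w$ with a strictly positive factor $\rho$ absorbing both the integrating factor and the exponential weight. Pairing against $\bar w$ and integrating by parts then yields
\[
\lambda\int_\R \rho\,|w|^2\,d\xi = -\int_\R \rho\,D(\varphi)\,|w'|^2\,d\xi \le 0 ,
\]
so that every point eigenvalue is real and non-positive. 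Combined with the essential-spectrum bound, this yields $\sigma(\cL)_{|L^2_a}\subset\{\Re\lambda\le0\}$.

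I expect the genuine difficulties to be concentrated at the degenerate end $\xi\to-\infty$ and to be twofold. First, the analysis of the compressed spectrum $\spd$ via Weyl sequences: because hyperbolicity fails there one cannot simply compute Fredholm borders, and the weight $a$ must be tuned so that candidate singular sequences with $\Re\lambda>0$ are excluded without spoiling the bound at $+\infty$. Second, and most delicate, is justifying the energy estimate itself, namely that the transformation $v\mapsto w=v/\varphi'$ maps eigenfunctions of $\cL$ in $L^2_a$ into the domain on which the integration by parts is legitimate and all boundary terms at $\pm\infty$ vanish. Since $D(\varphi)$ and $\varphi'$ both tend to zero at the degenerate end, this rests on a precise matching of the decay rates of $\varphi$, $\varphi'$ and the weight $e^{a\xi}$---exactly the case-by-case decay analysis emphasized in the introduction---and it is also the reason one obtains only $\Re\lambda\le0$ rather than a spectral gap: the degeneracy allows part of the spectrum to reach the imaginary axis and forbids pushing it strictly to the left as in case \textbf{[Nn]}.
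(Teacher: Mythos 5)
Your proposal follows essentially the same route as the paper's proof: stabilize the Fredholm borders coming from the non-degenerate end $u=\alpha$ via the exponential weight (possible precisely because $c > \bbc(\alpha)$), split what remains of the spectrum according to the closed-range property of $\cL_a-\lambda$, treating the closed-range part by parabolic regularization plus convergence in the generalized sense (Kato's stability theorem) and the non-closed-range part by singular (Weyl) sequences, and finish with the ground-state energy estimate based on $\varphi_x>0$, whose Sturm--Liouville form $(\rho D(\varphi) w')' = \lambda\rho w$ with $\rho = D(\varphi)\psi^2$ is exactly the paper's basic estimate; the delicate point you flag---matching the decay of $\varphi$, $\varphi_x$ and the weight at the degenerate end so the integration by parts is legitimate---is indeed where the paper works hardest, resolving it by the Gap Lemma at $+\infty$ and direct estimates at $-\infty$, which force the additional constraint $a < c/(2D(\alpha))$ on the weight. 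One notational slip worth correcting: in the paper's (and \cite{LeP19}'s) convention $\spd$ is the injective, \emph{closed}-range part (handled by regularization) and $\sppi$ is the non-closed-range part (handled by Weyl sequences), so your symbols are swapped, although your assignment of method to each part is the correct one.
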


A few remarks are in order. First, notice that Theorem \ref{mainthmNd} provides no information about a ``spectral gap" (a positive distance between the spectrum and the imaginary axis, except for the eigenvalue zero associated to translations of the wave, of course, which is usually isolated and simple). This is a consequence of the method of proof. Such information would play a key role in the extension of the present spectral analysis to the nonlinear stability problem. Second, it is to be observed that exponentially weighted spaces are needed because degenerate-diffusion equations of Nagumo type underlie a greater variety of traveling waves and there is a whole family of fronts parametrized by their speeds. There exist heteroclinic connections involving the (unstable) middle equilibrium state, $u =\alpha$, in contrast with the standard (strictly parabolic) reaction-diffusion Nagumo equation for which no exponential weights are needed to study the stability of traveling fronts. Finally, we mention at this point an early work by Hosono \cite{Hos86}, which establishes the stability of the sharp traveling front with speed $c = c_*$ for reaction diffusion equations in the ``porous medium'' form (that is, for $D(u) = mu^{m-1}$, with $m \geq 1$) and reaction function $f$ of Nagumo (or bistable type). Hosono's analysis is based on comparison principles and on the construction of super- and sub-solutions to the degenerate parabolic problem. (Notice, however, that this diffusion coefficient does not satisfy assumptions \eqref{hypD} except in the case $m = 2$; in addition, the method of proof is based on particular properties of solutions to the porous medium equation, more precisely, on the maximum principle.) Up to our knowledge, Hosono's paper is the only rigorous work on the stability of diffusion-degenerate Nagumo fronts in the literature. Our contribution lies primarily on the establishment of a new spectral technique in the case of fronts of any speed $c > \bbc(\alpha) > c_*$ with degenerate diffusion, and these ideas may prove useful in the case of systems with some degeneracy attached to them, where the standard techniques for scalar equations could be difficult to apply. 
 
\subsection*{Plan of the paper} This article is organized as follows. In Section \ref{secstructure} we review the existence results due to S\'{a}nchez-Gardu\~{n}o and Maini  \cite{SaMa97}. As a by-product of their analysis we establish the asymptotic decay of the waves and some structural properties that are needed in the stability study. In Section \ref{secspecprob} we pose the spectral stability problem. In particular, we recall Weyl's partition of spectrum and the new partition introduced in \cite{LeP20} to deal with degenerate diffusions. Section \ref{secparreg} contains a generalization to conjugated operators of the parabolic regularization technique and the convergence of operators in the generalized sense that is needed to locate a subset of the essential spectrum. In Section \ref{secenergyest}, we establish energy estimates on the solutions to the spectral equation for conjugated operators under particular hypotheses on the decaying properties of such solutions (see Assumption \ref{assumcrucial} below). The basic energy estimate (see Lemma \ref{lembee}) is the main tool to locate the point spectrum. The final section \S \ref{secNd} is devoted to prove the main Theorem \ref{mainthmNd}. 

\subsection*{On notation} We denote the real and imaginary parts of a complex number $\lambda$ by $\Re\lambda$ and $\Im\lambda$, respectively, as well as complex conjugation by ${\lambda}^*$. We use lowercase boldface roman font to indicate column vectors (e.g., $\bw$), and with the exception of the identity matrix $\Id$, we use upper case boldface roman font to indicate square matrices (e.g., $\A$). Linear operators acting on infinite-dimensional spaces are indicated with calligraphic letters (e.g., $\cL$ and $\cT$), except for the identity operator which is indicated by $\Idd$. In the sequel, $L^2$ and $H^m$, $m \in \Z$, will denote the standard Sobolev spaces of complex-valued functions on the real line, $L^2(\R;\C)$ and $H^m(\R;\C)$, respectively, except when it is explicitly stated otherwise. They are endowed with the standard inner products,
\[
\langle u,v \rangle_{L^2} = \int_\R {u} v^* \, dx, \qquad \langle u,v \rangle_{H^m} = \sum_{k=1}^m \langle \partial_x^k u, \partial_x^k v \rangle_{L^2}.
\]
Let $\ccM(\R;\C)$ denote the set of  Lebesgue measurable complex-valued functions in $\R$. For any $a \in \R$, the exponentially weighted Sobolev spaces, defined as
\[
H^m_a (\R;\C) = \{ v \in \ccM(\R ;\C) \, : \, e^{ax} v \in H^m(\R;\C)\},
\]
for $m \in \Z$, $m\geq 0$, are Hilbert spaces endowed with the inner product and norm,
\[
\langle u,v \rangle_{H^m_a} := \langle e^{ax} u, e^{ax} v \rangle_{H^m}, \qquad \| v \|^2_{H^m_a} := \| e^{ax} v 
\|^2_{H^m} = \langle v,v \rangle_{H^m_a}.
\]
According to custom we denote $L_a^2 = H^0_a$. We use the standard notation in asymptotic analysis \cite{Erde56}, in which the symbol ``$\sim$" means  ``behaves asymptotically like" as $x \to x_*$, more precisely, $f \sim g$ as $x \to x_*$ if $f - g = o(|g|)$ as $x \to x_*$ (or equivalently, $f/g \to 1$ as $x \to x_*$ if both functions are positive).

\section{Structure of monotone diffusion-degenerate Nagumo fronts}
\label{secstructure}

In this section we recall the existence theory of diffusion-degenerate Nagumo fronts and prove some structural properties which will be useful later on.

\subsection{Monotone diffusion-degenerate Nagumo fronts}

The existence of diffu\-sion-degenerate Nagumo fronts for equations of the form \eqref{degRD}, under assumptions \eqref{hypD} and \eqref{bistablef}, was proved by S\'{a}nchez-Gardu\~{n}o and Maini 
\cite{SaMa97}. The authors analyze the local and global phase portraits of the associated ODE system and prove the emergence of heteroclinic connections. Since the Nagumo reaction function has another equilibrium point at $u=\alpha \in (0,1)$, the structure of the fronts is richer than that of the Fisher-KPP case \cite{SaMa94a,SaMa95}. S\'{a}nchez-Gardu\~{n}o and Maini  \cite{SaMa97} show, for instance, the existence of monotone, oscillating and sharp fronts. We specialize our observations, however, to monotone fronts only. 

If we suppose that $u(x,t) = \varphi(x-ct)$ is a traveling wave solution to \eqref{degRD} with speed $c \in \R$, then the profile function $\varphi : \R \to \R$ is a solution to 
\begin{equation}
\label{fronteq}
(D(\varphi) \varphi_\xi)_\xi + c \varphi_\xi + f(\varphi) = 0,
\end{equation}
where $\xi = x - ct$ denotes the translation variable. We write the asymptotic limits of the traveling wave as
\[
u_\pm := \varphi(\pm \infty) = \lim_{\xi \to \pm \infty} \varphi(\xi).
\]
It is assumed that $u_+$ and $u_-$ are equilibrium points of the reaction function, that is, $u_\pm \in \{1,0,\alpha\}$ in the Nagumo case. Written as a first order system, equation \eqref{fronteq} is recast as
\begin{equation}
\label{origsys}
\begin{aligned}
\frac{d \varphi}{d\xi} &= v \\
D(\varphi) \frac{dv}{d\xi} &= -cv -D'(\varphi)v^2 - f(\varphi).
\end{aligned}
\end{equation}
Aronson \cite{Aron80} (see also \cite{SaMa97}) overcomes the singularity at $\varphi = 0$ by introducing the parameter $\tau = \tau(\xi)$, such that
\[
\frac{d\tau}{d \xi} = \frac{1}{D(\varphi(\xi))},
\]
and the system is transformed into
\begin{equation}\label{odeSys}
\begin{aligned}
\frac{d\varphi}{d\tau} &= D(\varphi)v \\
 \frac{dv}{d\tau} &= -cv -D'(\varphi)v^2 - f(\varphi).
\end{aligned}
\end{equation}

Heteroclinic trajectories of both systems are equivalent, so the analysis focuses on the study of the topological properties of equilibria for system \eqref{odeSys}, which depend upon the reaction function $f$. 

Let us suppose that $f = f(u)$ is of Nagumo type, satisfying \eqref{bistablef}. In \cite{SaMa97}, the authors define the function $\mathscr{D}: [0,1] \to \mathbb{R}$ as
\begin{equation}
\label{defcalD}
\mathscr{D}(\varphi) := \int_{0}^{\varphi} D(u) f(u) du.
\end{equation}
Likewise, let us define the following threshold speed
\begin{equation}
\label{defcstar}
\bbc (\alpha) := 2 \sqrt{D(\alpha) f'(\alpha)} > 0.
\end{equation}
The existence of monotone fronts can be summarized as follows.
\begin{proposition}[monotone Nagumo fronts \cite{SaMa97}]
\label{propstructure}
If the function $D = D(u)$ satisfies \eqref{hypD} and $f = f(u)$ is of Nagumo (or bistable) type satisfying 
\eqref{bistablef}, then there exist monotone traveling wave solutions to equation \eqref{degRD} which can be 
classified as follows:
\begin{itemize}
\item[(I)] \emph{Degenerate, monotone increasing fronts:} for each speed value $c > \bbc(\alpha)$, there exists a traveling wave solution $\varphi = \varphi(x-ct)$ with
\[
u_+ = \varphi(+\infty) = \alpha, \qquad u_- = \varphi(-\infty) = 0,
\]
and $\varphi_\xi > 0$ for all $\xi \in \R$. These fronts are diffusion degenerate at $u_- = 0$ as 
$\xi \to -\infty$.
\item[(II)] \emph{Non-degenerate, monotone decreasing fronts:} for each speed value $c > \bbc(\alpha)$, there exists a traveling wave solution $\varphi = \varphi(x-ct)$ with
\[
u_+ = \varphi(+\infty) = \alpha, \qquad u_- = \varphi(-\infty) = 1,
\]
and $\varphi_\xi < 0$ for all $\xi \in \R$. These fronts are non-degenerate in the sense that $D(u_\pm) > 0$ and 
$D(\varphi) \geq \delta_0 > 0$ for all values of $\varphi$.
\item[(III)] \emph{Stationary degenerate fronts:} when $c = 0$ 
and $\mathscr{D}(1) = 0$ there exists a unique monotone increasing front,$\varphi = \varphi(x)$, with 
\[
u_+ = \varphi(+\infty) = 1, \qquad u_- = \varphi(-\infty) = 0,
\]
as well as a unique monotone decreasing front, $\varphi = \varphi(x)$, with 
\[ 
u_+ = \varphi(+\infty) = 0, \qquad u_- = \varphi(-\infty) = 1.
\]
These fronts are diffusion-degenerate in the sense that $D$ vanishes at one of the end points.
\end{itemize} 
\end{proposition}

\begin{remark}
The main Theorem 1 in \cite{SaMa97} also establishes the existence of sharp traveling waves, as well as that of pulses and oscillatory (non-monotone) fronts, which are not considered in the present analysis. 
\end{remark}

As it was mentioned in the introduction, in this paper we focus on the stability properties of monotone degenerate fronts of type I, as described in Proposition \ref{propstructure}. An illustration of the degenerate monotone fronts of type I can be observed in Figure \ref{figfronts}. These fronts connect $u = 0$ with $u = \alpha$ in their limits as $\xi \to -\infty$ and $\xi \to +\infty$, respectively. This behavior is due to the fact that, for each speed value $c > \overline{c}(\alpha)$, the point $P_0 = (0,0)$ is a saddle-node and $P_\alpha = (\alpha, 0)$ is a locally stable node as equilibria of the first order system \eqref{origsys}; moreover, each front is monotone increasing, with $\varphi_\xi > 0$ for all $\xi \in \R$ (see S\'{a}nchez-Gardu\~{n}o and Maini \cite{SaMa97} for more details).

\begin{figure}[t]
\begin{center}
\includegraphics[scale=.55, clip=true]{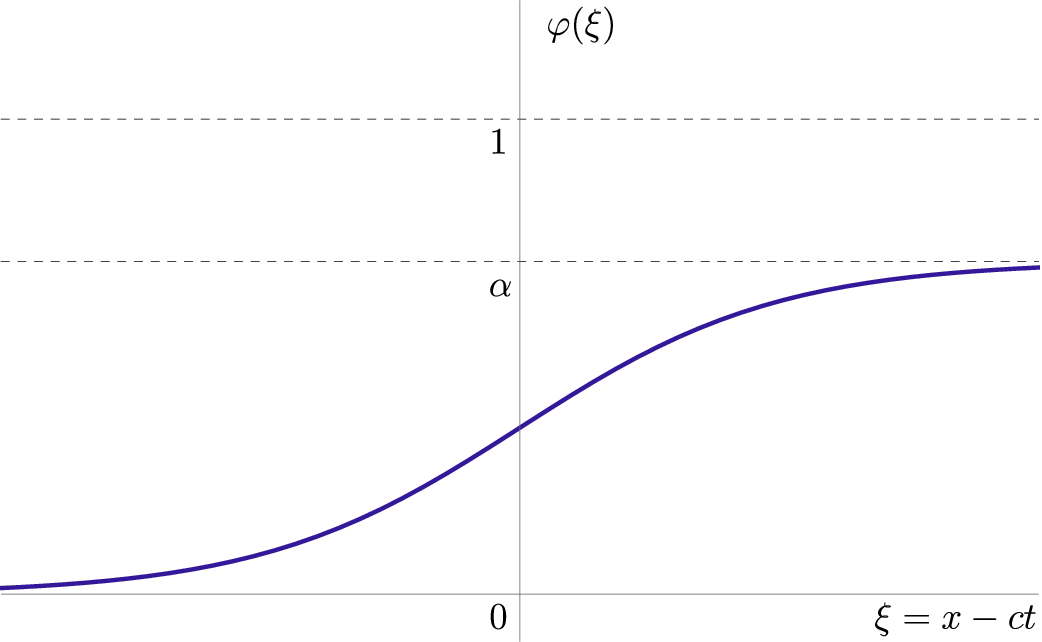}
\end{center}
\caption{Illustration of the family of monotone Nagumo traveling fronts of type I from Proposition \ref{propstructure}. For each value of $c > \bbc(\alpha)$, the front (in blue color) is monotone increasing and connects $u=0$ with $u=\alpha$; it is degenerate in the sense that $D$ vanishes at one of the end points, namely, at $u = 0$ (color online).}\label{figfronts}
\end{figure}

\subsection{Asymptotic decay}

In this section we examine the asymptotic behavior of the traveling fronts as $\xi \to \pm \infty$, which can be deduced from the existence analysis of \cite{SaMa97}. We shall use this information in the course of the stability study.

\begin{lemma}[asymptotic decay]\label{lemdecayNd}
Let $\varphi = \varphi(\xi)$ be a monotone increasing degenerate Nagumo front, for which $u_+ = \alpha$, $u_- = 0$, $\varphi_\xi > 0$, traveling with speed $c > \bbc(\alpha)$ (front of type I in Proposition \ref{propstructure}). Then $\varphi$ behaves asymptotically as 
\begin{equation}
\label{decayNdn}
|\partial_\xi^j(\varphi - u_+)| = |\partial_\xi^j(\varphi - \alpha)| = O(e^{-\eta \xi}), \quad \text{as } \; \xi \to +\infty, \; j=0,1,
\end{equation}
for $\eta = (2D(\alpha))^{-1} (c + \sqrt{c^2 - 4D(\alpha)f'(\alpha)}) > 0$; and as,
\begin{equation}
\label{decayNdd}
|\partial_\xi^j(\varphi - u_-)| = |\partial_\xi^j \varphi | = O(e^{|f'(0)|\xi/c}), \quad \text{as } \; \xi \to -\infty, \; j=0,1,
\end{equation}
on the degenerate side. Moreover, $\varphi$ and $\varphi_\xi$ behave asymptotically like
\begin{equation}
\label{asintotaNd}
\varphi \sim C_0 e^{-\eta \xi}, \quad \varphi_\xi \sim C_1 e^{-\eta \xi},
\end{equation}
when $\xi \to +\infty$, and like
\begin{equation}
\label{asintotaNd2}
\varphi \sim C_2 e^{|f'(0)| \xi/c}, \quad \varphi_\xi \sim C_3 e^{|f'(0)| \xi/c},
\end{equation}
when $\xi \to -\infty$, for appropriate positive uniform constants $C_j > 0$.
\end{lemma}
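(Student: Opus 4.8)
The plan is to treat the two ends of the front by entirely different methods, mirroring the dichotomy already exploited in Lemma~\ref{lemdecaysN}: the non-degenerate end $\xi \to +\infty$ (where $\varphi \to \alpha$ and $D(\alpha)>0$) is governed by a hyperbolic equilibrium and yields to phase-plane linearization, whereas the degenerate end $\xi \to -\infty$ (where $\varphi \to 0$ and $D(0)=0$) requires a dominant-balance analysis of the profile equation \eqref{fronteq}. I would establish \eqref{decayNdn} and \eqref{asintotaNd} first, and then \eqref{decayNdd} and \eqref{asintotaNd2}.

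For the non-degenerate end I would linearize the first order system \eqref{origsys} about the equilibrium $(\alpha,0)$, exactly as in the proof of Lemma~\ref{lemdecaysN}. Writing the system as $\tfrac{d}{d\xi}(\varphi,v)^\top = \mathbf{g}(\varphi,v)$ with $\mathbf{g} = (v,\, -D(\varphi)^{-1}(cv + D'(\varphi)v^2 + f(\varphi)))^\top$, the Jacobian at $(\alpha,0)$ is, using $f(\alpha)=0$,
\[
(D_{(\varphi,v)}\mathbf{g})_{|(\alpha,0)} = \begin{pmatrix} 0 & 1 \\ -f'(\alpha)/D(\alpha) & -c/D(\alpha) \end{pmatrix},
\]
with eigenvalues
\[
\mu_\pm = \frac{-c \pm \sqrt{c^2 - 4 D(\alpha) f'(\alpha)}}{2 D(\alpha)},
\]
which are real, distinct and negative precisely because $c > \bbc(\alpha) = 2\sqrt{D(\alpha)f'(\alpha)}$ and $f'(\alpha)>0$; hence $(\alpha,0)$ is a stable node. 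Since $\eta = -\mu_- = (2D(\alpha))^{-1}(c + \sqrt{c^2 - 4D(\alpha)f'(\alpha)})$, it then remains to argue that the connecting orbit enters $(\alpha,0)$ tangentially to the strong stable eigenvector (the one associated with $\mu_-$); this is where the global phase-plane structure of \cite{SaMa97} enters, and it is what pins the rate to $\eta$ rather than to the slow rate $|\mu_+|$. Standard exponential estimates on the local stable manifold would then give $|\partial_\xi^j(\varphi-\alpha)| = O(e^{-\eta\xi})$ and the sharp asymptotics \eqref{asintotaNd}, with $\varphi_\xi \sim \mu_-(\varphi - \alpha)$ supplying the $j=1$ statement.

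For the degenerate end I would work directly with \eqref{fronteq} and the dominant-balance ansatz $\varphi \sim C_2 e^{\lambda\xi}$ as $\xi\to-\infty$, $\lambda>0$. Expanding $D(\varphi) = D'(0)\varphi + O(\varphi^2)$ and $f(\varphi) = f'(0)\varphi + O(\varphi^2)$ near $\varphi = 0$, the diffusion term $(D(\varphi)\varphi_\xi)_\xi = D'(\varphi)\varphi_\xi^2 + D(\varphi)\varphi_{\xi\xi}$ is $O(\varphi^2)$, hence subleading, while the advection and reaction terms are $O(\varphi)$. Balancing the latter forces the reduced linear relation $c\varphi_\xi + f'(0)\varphi \approx 0$, whose decaying solution has rate $\lambda = -f'(0)/c = |f'(0)|/c$, giving \eqref{asintotaNd2} and, via $\varphi_\xi \sim \lambda\varphi$, the $j=1$ bound in \eqref{decayNdd}. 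To make this rigorous I would follow the asymptotic-ODE route of Lemma~\ref{lemdecaysN} (cf. Bender and Orszag \cite{BeOr78}); alternatively, pass to the desingularized system \eqref{odeSys}, whose Jacobian at $(0,0)$ is $\left(\begin{smallmatrix} 0 & 0 \\ -f'(0) & -c \end{smallmatrix}\right)$ with eigenvalues $0$ and $-c$, and carry out a center-manifold reduction: the front approaches $(0,0)$ along the center direction, and the reduced flow reproduces the rate $|f'(0)|/c$ after undoing the reparametrization $d\tau/d\xi = D(\varphi)^{-1}$.

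The main obstacle is the degenerate end. Unlike the saddle handled in Lemma~\ref{lemdecaysN}, the equilibrium $(0,0)$ is non-hyperbolic, so no stable-manifold estimate applies off the shelf; one must control the higher-order corrections to the dominant balance (equivalently, the center-manifold expansion) and verify that the subleading $O(\varphi^2)$ diffusion terms perturb only the prefactor, not the exponential rate. A secondary but genuine difficulty lies at the non-degenerate end: since both eigenvalues $\mu_\pm$ are stable, the selection of the strong stable direction (needed to obtain the sharp rate $\eta = |\mu_-|$ rather than $|\mu_+|$) is not local, and must be extracted from the connection analysis in \cite{SaMa97}.
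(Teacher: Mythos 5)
Your treatment of the degenerate end $\xi \to -\infty$ is essentially the paper's own argument: the paper also passes to the desingularized system \eqref{odeSys}, notes that $(0,0)$ is a saddle-node, computes the center manifold $v = h(\varphi) = -\tfrac{f'(0)}{c}\varphi + O(\varphi^2)$, and integrates $\varphi_\xi = h(\varphi) \sim \tfrac{|f'(0)|}{c}\varphi$ to obtain \eqref{decayNdd} and \eqref{asintotaNd2}. Your dominant-balance computation is the informal version of this, and your center-manifold alternative is exactly what the paper does, so that half is fine.

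At the non-degenerate end, the ``secondary difficulty'' you flag is in fact the crux, and it is a gap in the paper itself, not just in your sketch. You correctly observe that for $c > \bbc(\alpha)$ and $f'(\alpha) > 0$ both roots $\mu_\pm = \bigl(-c \pm \sqrt{c^2 - 4D(\alpha)f'(\alpha)}\,\bigr)/(2D(\alpha))$ are real and negative, so $(\alpha,0)$ is a stable node and a purely local (stable-manifold) argument can only yield the weak rate $|\mu_+|$ unless one proves the heteroclinic enters tangent to the strong stable direction. The paper does not prove this: its proof asserts that the Jacobian at $(\alpha,0)$ has ``only one negative eigenvalue'', namely $-\eta$, and then invokes standard estimates at a hyperbolic rest point; that assertion is false (it describes the saddle at $(1,0)$ occurring in Lemma~\ref{lemdecaysN}, not the node at $(\alpha,0)$), and no tangency argument is supplied. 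Worse, it is doubtful the gap can be closed in the direction you propose: lying on the strong stable manifold of a planar node is a codimension-one condition on $c$, whereas \textbf{[Nd]} connections exist for every $c > \bbc(\alpha)$, so one expects entry tangent to the weak direction and decay rate $\bigl(c - \sqrt{c^2 - 4D(\alpha)f'(\alpha)}\,\bigr)/(2D(\alpha))$ for generic (if not all) admissible speeds --- this is precisely what happens for super-critical Fisher--KPP fronts, e.g. the explicit Ablowitz--Zeppetella solution, which decays at the weak rate. So deferring the strong-direction selection to the ``connection analysis of \cite{SaMa97}'' is not a proof, and absent such a selection principle both your argument and the paper's leave \eqref{decayNdn}--\eqref{asintotaNd} (and their later use in the choice of the weight $a < a_2(\alpha)$ in Section~\ref{secNd}) unjustified.
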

\begin{proof}
Let us first examine the asymptotic behavior on the non-degenerate side as $\xi \to +\infty$. The linearization of system \eqref{origsys} around the equilibrium point $P_\alpha = (\alpha,0)$ is given by
\[
(D_{(\varphi,v)}\mathbf{g})_{|(\alpha,0)} = \begin{pmatrix}
0 & 1 \\ -f'(\alpha)/D(\alpha) & -c/D(\alpha)
\end{pmatrix},
\]
having only one negative eigenvalue,
\[
\mu = - \frac{1}{2D(\alpha)} \big( c + \sqrt{c^2 - \bbc(\alpha)^2}\big) =: - \eta,
\]
with $\eta > 0$. By standard ODE estimates around a hyperbolic end point the trajectory behaves asymptotically as
\[
\varphi \sim C_0 e^{-\eta \xi}, \quad \varphi_\xi \sim C_1 e^{-\eta \xi},
\]
when $\xi \to +\infty$ for some uniform constants $C_0 , C_1 > 0$.

To verify the exponential decay on the degenerate side, notice that $P_0 = (0,0)$ is a non-hyperbolic point for system \eqref{odeSys} for all admissible values of the speed $c \geq \bbc(\alpha) = \sqrt{4D(\alpha)f'(\alpha)} > 0$, and we need higher order terms to approximate the trajectory along a center manifold. Let us denote the former as $v = h(\varphi)$. After an application of the center manifold theorem, we find that $P_0$ is locally a saddle-node, and the center manifold has the form
\[
h(\varphi) = - \frac{f'(0)}{c} \varphi - \frac{1}{2c^3}\big( f''(0) c^2 + 4 D'(0) f'(0)^2 \big) \varphi^2 + O(\varphi^3),
\]
as $\varphi \to 0^+$ (see S\'{a}nchez-Gardu\~{n}o and Maini \cite{SaMa97} for details). The trajectory leaves the saddle-node along the center manifold for $\varphi \to 0^+$. Therefore, for $\xi \to -\infty$, the trajectory behaves as
\[
\varphi_\xi = h(\varphi) \sim - \frac{f'(0)}{c} \varphi \geq 0,
\]
as $\varphi \to 0^+$, yielding
\[
\varphi = O(e^{|f'(0)|\xi/c}), \quad \text{as } \; \xi \to -\infty.
\]
This proves the result.
\end{proof}

\subsection{Further properties}

We finish this section by proving that the fronts under consideration have the required regularity for the spectral analysis that follows. For later use, we also prove an auxiliary lemma that guarantees the boundedness of a particular coefficient. 

\begin{lemma}[$C^4$-regularity]
\label{lemC4}
Under assumptions \eqref{hypD} and \eqref{bistablef}, all  the monotone Nagumo fronts of Proposition \ref{propstructure} are of class $C^4(\R)$.
\end{lemma}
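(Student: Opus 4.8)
The plan is to exploit that at every \emph{finite} $\xi$ the profile takes values where the diffusion is strictly positive, so that the degenerate profile equation reduces to a regular second order ODE on which classical bootstrapping applies; the degeneracy is then only felt in the limits $\xi\to\pm\infty$, and there I would fall back on the asymptotic decay established above.

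First I would rewrite \eqref{fronteq} on the set $\{D(\varphi)>0\}$ in non-divergence form as
\[
\varphi_{\xi\xi} = G(\varphi,\varphi_\xi), \qquad G(\varphi,p) := -\frac{1}{D(\varphi)}\big(D'(\varphi)p^2 + c\,p + f(\varphi)\big).
\]
By Proposition \ref{propstructure} each of the monotone fronts is strictly monotone and has end states $u_\pm$ that are equilibria of $f$, with the degeneracy occurring only as $\xi\to\pm\infty$; hence $\varphi(\xi)$ lies strictly between its limiting values for every finite $\xi$, so that $\varphi(\xi)>0$ and, crucially, $D(\varphi(\xi))>0$ there (for the non-degenerate fronts \textbf{[Nn]} one even has $D(\varphi)\geq\delta_0>0$ uniformly). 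Thus the singularity $D(0)=0$ is never encountered at a finite point.

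Next I would run the standard bootstrap. On $\{\varphi>0\}$ the coefficients $1/D$, $D'$ and $f$ are of class $C^3$ in $\varphi$ — here $D\in C^4$ makes $1/D$ and $D'$ of class $C^3$ where $D>0$, while $f\in C^3$ from \eqref{bistablef} is the limiting regularity — so $G$ is $C^3$ jointly in $(\varphi,p)$. Since the trajectory of \eqref{origsys} is $C^1$ where $D(\varphi)>0$ and solves $\varphi_{\xi\xi}=G(\varphi,\varphi_\xi)$, the usual iteration (each pass through $G$ raising the order of $\varphi$ until it saturates, the cap being $f\in C^3$) yields $\varphi\in C^4$ — in fact $C^5$ — on a neighborhood of every finite $\xi$. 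This already gives $C^4$ regularity at all finite points, which is the content of the lemma for $C^4(\R)$.

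The step I expect to be the main obstacle, and where the asymptotic decay enters in a key way, is controlling the derivatives as $\xi\to\pm\infty$ toward the degenerate state $u=0$ (cases \textbf{[sN}$\mathbb{\gtrless}$\textbf{]} and \textbf{[Nd]}). There the prefactor $1/D(\varphi)$ in $G$ blows up, so boundedness of $\varphi_{\xi\xi}$ hinges on the numerator $D'(\varphi)\varphi_\xi^2 + c\,\varphi_\xi + f(\varphi)$ decaying at least as fast as $D(\varphi)$. Its leading contributions must cancel — this reflects that the relevant end point is a rest point of \eqref{origsys} — and to make this quantitative I would insert the sharp decay rates for $\varphi$ and $\varphi_\xi$ from Lemmas \ref{lemdecaysN}, \ref{lemdecayNd} and \ref{lemdecayNn}. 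These show that $G(\varphi,\varphi_\xi)$, and then the higher derivatives obtained by successively differentiating $\varphi_{\xi\xi}=G(\varphi,\varphi_\xi)$, stay bounded (indeed decay) as $\xi\to\pm\infty$. Consequently $\varphi,\varphi_\xi,\dots,\partial_\xi^4\varphi$ are continuous and bounded on all of $\R$, completing the proof that $\varphi\in C^4(\R)$ for each front type.
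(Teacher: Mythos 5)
Your first two paragraphs are, in more detail, exactly the paper's proof: at every finite $\xi$ one has $D(\varphi(\xi))>0$, so $(\varphi,\varphi_\xi)$ solves an autonomous system whose right-hand side is $C^3$ in a neighborhood of the orbit, and classical ODE regularity (your bootstrap) gives $\varphi\in C^4$ near every point, i.e.\ $\varphi\in C^4(\R)$. Your third paragraph is not needed: $C^4(\R)$ is a purely local statement, as you yourself observe at the end of your second paragraph, so boundedness or decay of $\partial_\xi^j\varphi$ as $\xi\to\pm\infty$ is not part of this lemma --- that is the subject of the later Lemmas \ref{lemH2} and \ref{lemauxi}, which indeed invoke the asymptotic decay results.
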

\begin{proof}
Since $D \in C^4$, $f \in C^3$ and $(\varphi,v) = (\varphi, \varphi_\xi)$ is a solution to a nonlinear autonomous system of the form
\[
\frac{d}{d\xi} \begin{pmatrix} \varphi \\ \varphi_\xi \end{pmatrix} = \mathbf{g}(\varphi, \varphi_\xi),
\]
with $\mathbf{g} = \mathbf{g}(\varphi,v)$ of class $C^3$ in $(\varphi,v)$, then $\varphi$ is at least of class $C^4$ for $\xi \in \R$. 
\end{proof}

\begin{lemma}[$H^2$-regularity]
\label{lemH2}
All the monotone degenerate Nagumo fronts of type I in Proposition \ref{propstructure} satisfy $\varphi_\xi \in H^2$.
\end{lemma}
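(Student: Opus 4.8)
The plan is to verify directly the three conditions $\varphi_\xi,\varphi_{\xi\xi},\varphi_{\xi\xi\xi}\in L^2(\R)$ that define membership in $H^2$. By Lemma \ref{lemC4} every front in Table \ref{tabla} is of class $C^4(\R)$, so these three functions are continuous, hence bounded and square-integrable on every compact interval. The whole question therefore reduces to the decay of $\varphi_\xi$ and of its first two derivatives as $\xi\to\pm\infty$, which I would control case by case using the asymptotic estimates of Lemmas \ref{lemdecaysN}, \ref{lemdecayNd} and \ref{lemdecayNn}.

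At every \emph{non-degenerate} end point --- both ends of case \textbf{[Nn]}, the non-degenerate side of the stationary fronts \textbf{[sN}$\mathbb{\gtrless}$\textbf{]}, and \emph{both} sides of case \textbf{[Nd]} (recall that its degenerate side still decays exponentially, by \eqref{decayNdd}) --- the cited lemmas give $\varphi-u_\pm=O(e^{-\eta|\xi|})$ and $\varphi_\xi=O(e^{-\eta|\xi|})$ for some $\eta>0$. I would upgrade this to the higher derivatives by differentiating the profile equation \eqref{fronteq}: solving for $\varphi_{\xi\xi}$ after dividing by $D(\varphi)$ (which is bounded below by $\delta_0>0$ at a non-degenerate end, and which on the degenerate side of \textbf{[Nd]} is replaced by the center-manifold relation $\varphi_\xi=h(\varphi)$ with $h(\varphi)\sim -f'(0)\varphi/c$) one expresses $\varphi_{\xi\xi}$, and after one more differentiation $\varphi_{\xi\xi\xi}$, as smooth functions of $(\varphi,\varphi_\xi)$ that vanish at the rest point $(u_\pm,0)$. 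Both therefore inherit the bound $O(e^{-\eta|\xi|})$, and since the square of an exponentially decaying function is integrable on a half-line, $\varphi_\xi,\varphi_{\xi\xi},\varphi_{\xi\xi\xi}$ are square-integrable on each such tail.

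The only tail requiring extra care is the degenerate side of the stationary fronts \textbf{[sN}$\mathbb{\gtrless}$\textbf{]}, where the decay is merely algebraic. There Lemma \ref{lemdecaysN}, specifically \eqref{decaysN}, supplies $|\partial_x^j\varphi|=O(|x|^{-(2+j)})$ only for $j=0,1$, so I must extend it to $j=2,3$. I would do this by differentiating the asymptotic relations \eqref{asintotasN} --- equivalently, by differentiating the first-order Hamiltonian relation $\varphi_x=-\sqrt2\,D(\varphi)^{-1}(-\mathscr{D}(\varphi))^{1/2}$ once and twice more and inserting the expansions of $D$ and $\mathscr{D}$ near $\varphi=0$ --- to obtain $\varphi_{xx}=O(|x|^{-4})$ and $\varphi_{xxx}=O(|x|^{-5})$. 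Squaring produces tails of order $|x|^{-6}$, $|x|^{-8}$ and $|x|^{-10}$, all integrable at infinity. Collecting the compact-set bound with the tail estimates yields $\varphi_\xi,\varphi_{\xi\xi},\varphi_{\xi\xi\xi}\in L^2(\R)$, that is, $\varphi_\xi\in H^2$.

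I expect the main obstacle to be precisely this last, algebraically decaying degenerate tail: unlike the hyperbolic and center-manifold ends, it admits no linearization shortcut, so controlling $\varphi_{xx}$ and $\varphi_{xxx}$ forces one to differentiate the fractional-power asymptotics with care and to check that the resulting decay exponents are large enough for the squares to be integrable. Everything else amounts to routine bookkeeping on exponentials.
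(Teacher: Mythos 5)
Your overall strategy is the same as the paper's: Lemma \ref{lemC4} handles compact sets, and everything reduces to tail decay of $\varphi_\xi,\varphi_{\xi\xi},\varphi_{\xi\xi\xi}$, treated case by case. Your treatment of all the \emph{exponentially} decaying tails is correct, and in fact more explicit than the paper's (which just invokes ``the same arguments''): at a non-degenerate end one solves the profile equation for $\varphi_{\xi\xi}$, and on the degenerate side of \textbf{[Nd]} the center-manifold relation $\varphi_\xi=h(\varphi)$ plays the same role, giving exponential bounds on the higher derivatives.

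The genuine gap is in the step you yourself flagged as the main obstacle: the algebraic tail of the stationary fronts \textbf{[sN}$\mathbb{\gtrless}$\textbf{]}. The route you propose would fail. Along the orbit $\varphi_x=F(\varphi)$ with $F(\varphi):=-\sqrt2\,D(\varphi)^{-1}(-\mathscr{D}(\varphi))^{1/2}$, and inserting the expansions $D(\varphi)=D'(0)\varphi+O(\varphi^2)$, $\mathscr{D}(\varphi)=\tfrac13 D'(0)f'(0)\varphi^3+O(\varphi^4)$ (exactly as in the paper's proof of Lemma \ref{lemdecaysN}) gives $F(\varphi)=-a_0\varphi^{1/2}+O(\varphi^{3/2})$ with $a_0>0$. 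Differentiating this relation therefore yields
\[
\varphi_{xx}=F'(\varphi)\,F(\varphi)=\Big(-\tfrac{a_0}{2}\varphi^{-1/2}+O(\varphi^{1/2})\Big)\Big(-a_0\varphi^{1/2}+O(\varphi^{3/2})\Big)=\tfrac{a_0^2}{2}+O(\varphi),
\]
so this computation produces a $\varphi_{xx}$ tending to the \emph{positive constant} $a_0^2/2$, not $O(|x|^{-4})$: the square-root leading term of $F$ destroys the decay, and with it the $L^2$ bound on this tail. Your alternative phrasing, ``differentiate the asymptotic relations \eqref{asintotasN}'', is not a legitimate operation either; asymptotic equivalences cannot be differentiated without justification. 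That justification is precisely the ingredient your proposal is missing and the one the paper supplies: starting from $-x^3\varphi_x\to C_0$ (which is \eqref{asintotasN}), the paper applies L'H\^opital's rule to get $C_0=\lim x^4\varphi_{xx}/3=-\lim x^5\varphi_{xxx}/12$, hence $|\varphi_{xx}|\leq C x^{-4}$, $|\varphi_{xxx}|\leq C x^{-5}$, and only then squares and integrates. (As a side remark, the incompatibility between your Hamiltonian route and the stated asymptotics \eqref{asintotasN} is a real tension in the setup, but the paper's proof of Lemma \ref{lemH2} never returns to the orbit equation; it rests entirely on the L'H\^opital bootstrap from \eqref{asintotasN}, which is what your argument needs to adopt to go through.)
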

\begin{proof}
Since each profile function $\varphi$ is at least of class $C^4$ as a function of $\xi \in \R$, in order to show that $\varphi_\xi \in H^2(\R)$ it suffices to verify that $\varphi_\xi$ and its derivatives decay sufficiently fast as $\xi \to \pm \infty$. For the degenerate fronts of type I under consideration, this behaviour follows directly from exponential decay (see \eqref{decayNdn} and \eqref{decayNdd}), which yields $\varphi_\xi \in L^2(\R)$. The same conclusion can be obtained for $\varphi_{\xi \xi}$ and $\varphi_{\xi \xi \xi}$ by a bootstrapping argument. We omit the details.
\end{proof}

\begin{lemma}
\label{lemauxi}
For all monotone degenerate Nagumo fronts of type I, there exists a uniform constant $C > 0$ such that
\[
\sup_{\xi \in \R} \left| D(\varphi) \frac{\varphi_{\xi \xi}}{\varphi_{\xi}}\right| \leq C.
\]
\end{lemma}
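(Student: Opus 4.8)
The plan is to read the identity directly off the traveling-wave equation \eqref{fronteq} and thereby reduce the boundedness of $D(\varphi)\varphi_{\xi\xi}/\varphi_\xi$ to that of a single ratio. Expanding the derivative in \eqref{fronteq} gives
\[
D(\varphi)\varphi_{\xi\xi} + D'(\varphi)\varphi_\xi^2 + c\,\varphi_\xi + f(\varphi) = 0,
\]
and, since each front is strictly monotone by Proposition~\ref{propstructure}, the derivative $\varphi_\xi$ never vanishes on $\R$, so we may divide by it to obtain
\[
D(\varphi)\frac{\varphi_{\xi\xi}}{\varphi_\xi} = -D'(\varphi)\varphi_\xi - c - \frac{f(\varphi)}{\varphi_\xi}.
\]
The first term is uniformly bounded because $D'$ is continuous on the compact range $[0,1]$ of $\varphi$ and $\varphi_\xi \in H^2 \hookrightarrow L^\infty$ by Lemma~\ref{lemH2}, while $c$ is a constant. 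Thus everything reduces to proving that the map $\xi \mapsto f(\varphi(\xi))/\varphi_\xi(\xi)$ is bounded on $\R$.

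I would establish this by showing that the ratio is continuous on $\R$ — again because $\varphi_\xi \neq 0$ everywhere and $\varphi$, $\varphi_\xi$ are at least $C^3$ by Lemma~\ref{lemC4} — and that it admits finite limits as $\xi \to \pm\infty$, whence it is bounded. At any non-degenerate (hyperbolic) end point $u_\ast \in \{\alpha,1\}$ the decay Lemmata~\ref{lemdecaysN}, \ref{lemdecayNd} and \ref{lemdecayNn} give $\varphi - u_\ast \sim C\,e^{-\eta|\xi|}$ together with $\varphi_\xi \sim \mp\eta\,(\varphi - u_\ast)$; combined with the Taylor expansion $f(\varphi) = f'(u_\ast)(\varphi - u_\ast) + O((\varphi - u_\ast)^2)$ this forces the ratio to the finite value $\mp f'(u_\ast)/\eta$. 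This disposes of both limits for the fully non-degenerate case \textbf{[Nn]} and of the non-degenerate end points of \textbf{[sN}$\mathbb{\gtrless}$\textbf{]} and \textbf{[Nd]}.

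The main obstacle is the degenerate end point $u = 0$, where $f(\varphi)$, $\varphi_\xi$ and $D(\varphi)$ all vanish simultaneously, so the limit cannot be read off by bounding numerator and denominator separately but must be extracted from the precise leading-order proportionality between $\varphi_\xi$ and $\varphi$. Here I would invoke the refined asymptotics obtained inside the proofs of the decay lemmata. For the traveling degenerate front \textbf{[Nd]}, the center-manifold expansion of Lemma~\ref{lemdecayNd} gives $\varphi_\xi \sim -f'(0)\varphi/c$ as $\xi \to -\infty$, so with $f(\varphi) \sim f'(0)\varphi$ one obtains the finite limit $f(\varphi)/\varphi_\xi \to -c$. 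For the stationary degenerate fronts \textbf{[sN}$\mathbb{\gtrless}$\textbf{]}, the Hamiltonian identity $\varphi_\xi = -\sqrt{2}\,(-\mathscr{D}(\varphi))^{1/2}/D(\varphi)$ from Lemma~\ref{lemdecaysN} yields $\varphi_\xi \sim -a_0\varphi^{1/2}$ near the degenerate side, and hence $f(\varphi)/\varphi_\xi \sim -f'(0)\varphi^{1/2}/a_0 \to 0$. In every case the ratio $f(\varphi)/\varphi_\xi$ is continuous on $\R$ with finite limits at $\pm\infty$, and therefore bounded; combining this with the bounds on the two remaining terms produces the uniform constant $C$. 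The delicate point throughout is precisely this degenerate limit, where the argument relies essentially on the leading-order relation between $\varphi_\xi$ and $\varphi$ (respectively $\varphi^{1/2}$) supplied by the existence analysis of \cite{SaMa97}.
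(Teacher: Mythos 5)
Your proof is correct, and it takes a genuinely different route from the paper's. The paper argues directly on the product: it feeds the asymptotics of $\varphi$, $\varphi_\xi$, $\varphi_{\xi\xi}$ and $D(\varphi)$ into $D(\varphi)\varphi_{\xi\xi}/\varphi_\xi$ (for instance $\varphi \sim C_0 x^{-2}$, $\varphi_x \sim -C_1 x^{-3}$, $\varphi_{xx} \sim C_2 x^{-4}$, $D(\varphi) \sim D'(0)\varphi$ on the degenerate side of the stationary fronts, exponential rates elsewhere), obtains finite limits at $\pm\infty$, and concludes by continuity on compact sets; this requires second-derivative asymptotics, which the paper only obtains through the L'H\^{o}pital-type arguments in the proof of Lemma \ref{lemH2}. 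You instead use the profile equation \eqref{fronteq} to rewrite the quantity as $-D'(\varphi)\varphi_\xi - c - f(\varphi)/\varphi_\xi$, which reduces everything to first-order information: boundedness of $\varphi_\xi$ and the leading-order proportionality between $\varphi_\xi$ and $\varphi$ (center manifold for \textbf{[Nd]}, Hamiltonian level set for the stationary cases \textbf{[sN}$\mathbb{\gtrless}$\textbf{]}, hyperbolic linearization at the non-degenerate ends), exactly what the existence analysis and Lemmata \ref{lemdecaysN}--\ref{lemdecayNn} supply. Your decomposition avoids $\varphi_{\xi\xi}$ asymptotics altogether and makes the cancellation at the degenerate point transparent; it even yields sharper information, since the identity shows that at a non-degenerate end the coefficient tends to a finite, in general \emph{nonzero}, limit (for the stationary front it equals $-f'(1)/\eta = D(1)\eta$), quietly correcting the paper's harmless overstatement that the coefficient tends to zero at both ends --- only boundedness matters, so the lemma is unaffected. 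The one cosmetic imprecision in your write-up is the relation $\varphi_\xi \sim \mp\eta\,(\varphi - u_*)$, which is slightly stronger than what the decay lemmata literally assert (each of $\varphi - u_*$ and $\varphi_\xi$ is asymptotic to a nonzero constant times $e^{-\eta|\xi|}$); but the weaker statement already gives a finite limit for $f(\varphi)/\varphi_\xi$, so nothing in your argument is lost.
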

\begin{proof}
On the degenerate side, as $\xi \to -\infty$, one has $\varphi \to 0^+$ and $D(\varphi) = D'(0) \varphi + O(\varphi^2)$. Therefore, from exponential decay of $\varphi_\xi$ and $\varphi_{\xi \xi}$ it is easy to verify the asymptotic behaviour
\[
D(\varphi) \frac{\varphi_{\xi\xi}}{\varphi_\xi} \sim C D'(0) e^{|f'(0)|\xi/c} \to 0,
\]
as $\xi \to -\infty$, for some uniform constant $C > 0$. On the non-degenerate side, since $D(\varphi) \geq \delta > 0$ for $\varphi \to \alpha$ as $\xi \to +\infty$, we conclude directly that $D(\varphi) \varphi_{\xi\xi}/\varphi_\xi \to 0$ as $\xi \to +\infty$. Finally, since the coefficient $D(\varphi) \varphi_{\xi\xi}/\varphi_\xi$ is continuous in $\xi \in [-R,R]$ for any $R> 0$ and vanishes as $\xi \to \pm \infty$ the conclusion follows.
\end{proof}

\section{The spectral stability problem}
\label{secspecprob}

\subsection{Perturbation equations}

Suppose that $u(x,t) = \varphi(x-ct)$ is one of the monotone diffusion-degenerate Nagumo traveling fronts of Proposition \ref{propstructure}, traveling with speed $c \in \R$. In all the cases under consideration, the speed is non-negative, so we assume from this point on that $c \geq 0$. With a slight abuse of notation we make the change of variables 
$x \to x-ct$, where now $x$ denotes the translation variable. We shall keep this notation for 
the rest of the paper. In the new coordinates, equation \eqref{degRD} is recast as
\begin{equation}
\label{newRD}
u_t = ( D(u) u_x)_x + cu_x + f(u),
\end{equation}
for which traveling fronts are stationary solutions, $u(x,t) = \varphi(x)$, satisfying the profile equation,
\begin{equation}\label{profileq}
(D(\varphi)\varphi_x)_x+c\varphi_x+f(\varphi)=0.
\end{equation}
The front connects asymptotic equilibrium points of the reaction: $\varphi(x) \to u_{\pm}$ as $x 
\to \pm \infty$, where $u_{\pm} \in \{ 0,1,\alpha\}$, depending on the case (see Proposition \ref{propstructure}). The wave can be either monotone increasing, $\varphi_x>0$, or monotone decreasing, $\varphi_x<0$.

Let us consider solutions to \eqref{newRD} of the form $\varphi(x)+u(x,t)$, where, from now on, $u$ denotes a 
perturbation. Substituting we obtain the nonlinear perturbation equation
\[  
u_t = (D(\varphi+u)(\varphi+u)_x)_x + cu_x+ c\varphi_x +f(u+\varphi). 
\]
Linearizing around the front and using the profile equation \eqref{profileq} we get
\begin{equation}\label{lineareq}
u_t = (D(\varphi)u)_{xx} +cu_x+f'(\varphi)u.
\end{equation}
The right hand side of equation \eqref{lineareq}, regarded as a linear operator acting on an appropriate 
Banach space $X$, naturally defines the following spectral problem 
\begin{equation}\label{spectralp}
\lambda u = \mathcal{L}u,
\end{equation}
where $\lambda \in \mathbb{C}$ is the spectral parameter, and
\begin{equation}\label{opL}
\begin{aligned}
\mathcal{L}&: \mathcal{D}(\mathcal{L}) \subset X \to X, \\
\mathcal{L}u &=(D(\varphi)u)_{xx} +cu_x+f'(\varphi)u.
\end{aligned}
\end{equation}

Loosely speaking, by spectral stability we understand the absence of solutions $u \in X$ to equation \eqref{spectralp} for $\Re \lambda > 0$, that is, the situation in which the spectrum of the operator $\cL$ belongs to the stable half plane with $\Re \lambda \leq 0$. Clearly, the spectrum of the operator depends upon the choice of the Banach space $X$. In the stability analysis of nonlinear waves, it is customary to consider energy spaces $X = L^{2}$ and $\mathcal{D}(\mathcal{L})= H^{2}$, 
so that $\mathcal{L}$ can be regarded as a closed, densely defined operator acting on $L^{2}$ (see Remark \ref{remclosed} below). In this fashion, the stability analysis of the operator $\cL$ pertains to \textit{localized 
perturbations}.

\subsection{Partitions of spectra}

In this section we define the resolvent and spectra of a linearized operator and what we understand as spectral stability. More importantly, we introduce two different partitions of the spectrum, which will be used in the stability analysis.

Let $X$ and $Y$ be Banach spaces, 
and let $\ccC(X,Y)$ and $\ccB(X,Y)$ denote the sets of all closed and bounded linear operators from $X$ to 
$Y$, respectively. For any $\cL \in \ccC(X,Y)$ we denote its domain as $\cD(\cL) \subseteq X$ and its range 
as $\cR(\cL) := \cL (\cD(\cL)) \subseteq Y$. We say $\cL$ is densely defined if $\overline{\cD(\cL)} = X$.

\begin{definition}[resolvent and spectrum]
 \label{defspec}
Let $\cL \in \ccC(X,Y)$ be a closed, densely defined operator. Its \textit{resolvent} 
$\rho(\cL)$ is defined as the set of all complex numbers $\lambda \in \C$ such that $\cL - \lambda$ is 
injective, $\cR(\cL - \lambda) = Y$ and $(\cL - \lambda)^{-1}$ is bounded. Its \textit{spectrum} is defined 
as $\sigma(\cL) := \C \backslash \rho(\cL)$. 
\end{definition}

\begin{definition}[$X$-spectral stability]
\label{defspecstab}
 We say the traveling front $\varphi$ is $X$-\textit{spectrally stable} if 
 \[ 
 \sigma(\mathcal{L}) \subset \{ \lambda \in \C \, : \, \Re{\lambda} \leq 0\}.
  \]
\end{definition}

In the analysis of stability of nonlinear waves (cf. \cite{KaPro13, San02}) the spectrum is often partitioned into essential
and isolated point spectrum. This definition is originally due to Weyl \cite{We10} (see also \cite{EE87,Kat80}).

\begin{definition}[Weyl's partition of spectrum]
\label{defsigmaone}
Let $\cL \in \ccC(X,Y)$ be a closed, dense\-ly defined linear operator.
We define its \textit{isolated point spectrum} and its \textit{essential spectrum}, as the sets
\[
\begin{aligned}
\iptsp(\cL) &:= \{ \lambda \in \C \, : \, \cL - \lambda \,\text{ is Fredholm with index zero and
non-trivial kernel} \}, \, \text{and,} &\\
\ess(\cL) &:= \{ \lambda \in \C \,: \, \cL - \lambda \,\text{ is either not Fredholm, or has index different 
from zero} \},
\end{aligned}
\]
respectively. 
\end{definition}

We remind the reader that an operator $\mathcal{L} \in \ccC(X,Y)$ is said to be Fredholm if its 
range $\mathcal{R(L)}$ is closed, and both its nullity, $\nul\mathcal{L} = \dim \ker \mathcal{L}$, and 
its deficiency, $\mathrm{def} \,\mathcal{L} = \mathrm{codim} \, \mathcal{R(L)}$, are finite. $\cL$ is said to 
be semi-Fredholm if $\cR(\cL)$ is closed and at least one of $\nul \cL$ and $\mathrm{def} \, \cL$ is finite. 
In both cases the index of $\cL$ is defined as
$\ind \mathcal{L} = \nul \mathcal{L} - \mathrm{def} \, \mathcal{L}$ (cf. \cite{Kat80}).

\begin{remark} 
Notice that since $\cL$ is a closed operator, then $\sigma(\cL) = \iptsp(\cL) \cup \ess(\cL)$ (cf. Kato \cite{Kat80}, p. 167). There are many definitions of essential spectrum in the literature. Weyl's definition makes it easy to compute and has the advantage that the remaining point spectrum, $\iptsp$, is a discrete set of isolated eigenvalues (see Remark 2.2.4 in \cite{KaPro13}). 
\end{remark}

In the present context with degenerate diffusion, however, this partition is not particularly useful when analyzing fronts that connect a degenerate point for the diffusion due to the loss of hyperbolicity of the asymptotic coefficients of the operator when the spectral problem is written as a first order system. Consequently, in this paper we shall also employ the following partition of the spectrum (see also \cite{LeP20}).

\begin{definition}
 \label{defspecd}
Let $\cL \in \ccC(X,Y)$ be a closed, densely defined operator. We define the following subsets of the complex 
plane:
\[
 \begin{aligned}
 \ptsp(\cL) := &\{ \lambda \in \C \, : \, \cL - \lambda \; \text{is not injective}\},\\ 
 \spd(\cL) := &\{ \lambda \in \C \, : \, \cL - \lambda \; \text{ is injective, } \cR(\cL - \lambda) \text{ is 
closed and } \cR(\cL- \lambda) \neq Y\},\\
\sppi(\cL) := &\{ \lambda \in \C \, : \, \cL - \lambda \; \text{ is injective and } \cR(\cL - \lambda) 
\text{ is not closed}\}.
 \end{aligned}
\]
\end{definition}

\begin{remark}
Observe that the sets $\ptsp(\cL)$, $\sppi(\cL)$ and $\spd(\cL)$ are clearly disjoint and, since $\cL$ 
is closed, we know that if $\cL$ is invertible then $\cL^{-1} \in \ccB(Y,X)$. Consequently,
\[
 \sigma(\cL) = \ptsp(\cL) \cup \sppi(\cL) \cup \spd(\cL).
\] 
Also note that the set of isolated eigenvalues with finite multiplicity (see Definition \ref{defsigmaone}) is contained in $\ptsp$, that is, $\iptsp(\cL) \subset \ptsp(\cL)$, and that $\lambda \in \ptsp(\cL)$ if and only if there exists $u \in \cD(\cL)$, $u \neq 0$ such that $\cL u = \lambda u$.
\end{remark}

\begin{remark}\label{remponla}
Note that the set $\spd(\cL)$ is clearly contained in what is known as the \textit{compression spectrum} 
(see, e.g., \cite{Jerib15}, p. 196):
\[
 \spd(\cL) \subset {\sigma_\mathrm{\tiny{com}}}(\cL) := \{ \lambda \in \C \, : \, \cL - \lambda \; \text{ is 
injective, and } \overline{\cR(\cL-\lambda)} \neq Y\},
\]
whereas the set $\sppi(\cL)$ is contained in the \textit{approximate spectrum} \cite{EE87}, defined as
\[
\begin{aligned}
 \sppi(\cL) \subset {\sigma_\mathrm{\tiny{app}}}(\cL) := &\{ \lambda \in \C \, : \, \text{there exists $u_n \in \cD(\cL)$ with $\|u_n\| = 1$} \\ & \; \text{ such that $(\cL - \lambda)u_n \to 0$ in 
$Y$ as $n \to +\infty$}\}.
\end{aligned}
\]
The last inclusion follows from the fact that, for any $\lambda \in \sppi(\cL)$, the range of $\cL - \lambda$ is 
not closed and, therefore, there exists a \textit{singular sequence}, $u_n \in \cD(\cL)$, $\|u_n\| =1$ such 
that $(\cL - \lambda)u_n \to 0$, which contains no convergent subsequence (see Theorems 5.10 and 5.11 in Kato \cite{Kat80}, p. 233). Moreover, since the space $L^2$ is reflexive, this sequence can be chosen so that $u_n \rightharpoonup 0$ in $L^2$ (see \cite{EE87}, p. 415). On the other hand, it is clear that $\ptsp(\cL) \subset {\sigma_\mathrm{\tiny{app}}}(\cL)$, as well.
\end{remark}

In the sequel, $\sigma(\cL)_{|X}$ will denote the $X$-spectrum of the linearized operator \eqref{opL} with dense domain $\cD(\cL) \subset X$. For example, when it is computed with respect to a local energy weighted space $L_a^2$ for some $a\in \R$, its spectrum is denoted by $\sigma(\cL)_{|L_a^2}$. This notation applies to any subset of the spectrum.

\begin{remark}[translation eigenvalue]
\label{remef0}
We recall that $\lambda = 0$ always belongs to the $L^2$-point spectrum because
\[ 
\mathcal{L} \varphi_x = \partial_x \big( (D(\varphi)\varphi_x)_x+c\varphi_x+f(\varphi) \big)=0,
\]
in view of the profile equation \eqref{profileq} and the fact that $\varphi_x \in 
H^{2} = \cD(\cL)$ (see Lemma \ref{lemH2}). Thus, $\varphi_x$ is the eigenfunction associated to the eigenvalue 
$\lambda=0$.
\end{remark}

\subsection{Differential operators on the real line}
\label{secdiffop}

For convenience of the reader, in this section we gather some known facts of the spectral theory of linear differential operators. For more information the reader is referred to \cite{EE87,Nai-LDO2}.

Consider a linear differential operator of the form
\begin{equation}
\label{gendiffop}
\begin{aligned}
{\cL} &: \cD({\cL}) \subset L^2(\R) \to L^2(\R),\\
\cL  &= b_m(x) \partial_x^m + \ldots + b_1(x) \partial_x + b_0 (x) \Idd,
\end{aligned}
\end{equation}
where $m \geq 1$ denotes the order of the operator and $x \in \R$ is the spatial variable. It is assumed that the coefficients are real, smooth enough and uniformly bounded, that they have finite asymptotic limits at $x=\pm \infty$ and that $b_m$ is non-negative; more precisely, that
\begin{equation}
\label{hypbjs}
\begin{aligned}
b_j \in C^m(\R) \cap L^\infty(\R), & & 0 \leq j \leq m,\\
\partial_x^k b_j \in L^\infty(\R), & & 1 \leq j,k \leq m,\\
b_j^\pm := \lim_{x \to \pm \infty} b_j(x) \in \R,  & & 0 \leq j \leq m,\\
\text{and } \quad b_m(x) \geq 0, & &\text{for all } x \in \R.
\end{aligned}
\end{equation}
The domain $\cD(\cL)$ is supposed to be a dense subspace of $L^2(\R)$ and, consequently, $\cL : L^2 \to L^2$ is densely defined. For example, let us consider
\[
\cD(\cL) = H^m(\R),
\]
which is clearly dense in $L^2(\R)$.

We remind the reader that if $\cL$ and $\cT$ are linear operators from $X$ to $Y$ (Banach spaces) such that $\cD(\cL) \subset \cD(\cT)$ and $\cL u = \cT u$ for all $u \in \cD(\cL)$, then $\cT$ is called an \emph{extension of} $\cL$ (denoted $\cT \supset \cL$) and $\cL$ is called a \emph{restriction of} $\cT$ (denoted $\cL \subset \cT$); cf. \cite{EE87,Kat80}. An operator $\cT$ from $X$ to $Y$ is \emph{closable} if $\cT$ has a closed extension. $\overline{\cT}$ is the smallest closed extension of $\cT$ if any other closed extension of $\cT$ is also an extension of $\overline{\cT}$.
\begin{proposition}
\label{propclosable}
Under assumptions \eqref{hypbjs}, $\cL : \cD(\cL) = H^m(\R) \subset L^2(\R) \to L^2(\R)$ given in \eqref{gendiffop} is a closable operator.
\end{proposition}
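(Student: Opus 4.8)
The plan is to invoke the standard criterion from the theory of unbounded operators on a Hilbert space: a densely defined operator $\cT$ is closable if and only if its adjoint $\cT^*$ is densely defined (see, e.g., \cite{Kat80} or \cite{Weid80}). Since $\cD(\cL) = H^m(\R)$ is dense in $L^2(\R)$, the operator $\cL$ is densely defined and its adjoint $\cL^*$ is well defined; it therefore suffices to prove that $\cD(\cL^*)$ is dense in $L^2(\R)$, and for this I would show that $C_0^\infty(\R) \subset \cD(\cL^*)$.

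First I would exhibit the formal adjoint differential expression. Recalling that the inner product on $L^2(\R;\C)$ is $\langle u,v\rangle_{L^2} = \int_\R u v^* \, dx$, for any $u \in \cD(\cL) = H^m(\R)$ and any $v \in C_0^\infty(\R)$ one integrates by parts; all boundary terms vanish because $v$ has compact support, and since the coefficients $b_j$ are real no conjugation falls on them. This yields
\[
\langle \cL u, v\rangle_{L^2} = \langle u, \cL^\dagger v\rangle_{L^2}, \qquad \cL^\dagger v := \sum_{j=0}^m (-1)^j \partial_x^j\big( b_j v\big).
\]

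The key step is then to check that, for each fixed $v \in C_0^\infty(\R)$, the element $\cL^\dagger v$ belongs to $L^2(\R)$. By Leibniz's rule, $\cL^\dagger v$ is a finite linear combination of terms of the form $(\partial_x^k b_j)(\partial_x^{\ell} v)$ with $k + \ell \leq j \leq m$. Assumptions \eqref{hypbjs} guarantee that each $b_j \in C^m(\R)$ with $\partial_x^k b_j \in L^\infty(\R)$ for $1 \leq k \leq m$, while $v$ and all its derivatives are continuous with compact support. Hence $\cL^\dagger v$ is a continuous, compactly supported function, so in particular $\cL^\dagger v \in L^2(\R)$. Setting $w := \cL^\dagger v \in L^2$, the identity above shows that $\langle \cL u, v\rangle_{L^2} = \langle u, w\rangle_{L^2}$ holds for all $u \in \cD(\cL)$, which is exactly the statement that $v \in \cD(\cL^*)$ with $\cL^* v = \cL^\dagger v$.

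Since $C_0^\infty(\R)$ is dense in $L^2(\R)$ and is contained in $\cD(\cL^*)$, the adjoint $\cL^*$ is densely defined, and the closability criterion gives that $\cL$ is closable, proving the proposition. I expect no serious obstacle here: the only point requiring care is the verification that $\cL^\dagger v \in L^2$, which is precisely where the regularity and uniform boundedness hypotheses \eqref{hypbjs} on the coefficients and their derivatives are used; note in particular that the non-negativity of $b_m$ plays no role in closability, and that $\cL$ need not be symmetric for the argument to work.
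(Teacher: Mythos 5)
Your proof is correct and follows essentially the same route as the paper: both establish closability by showing the adjoint $\cL^*$ is densely defined and then invoking the standard criterion (Theorem 5.28 in \cite{Kat80}). The only difference is immaterial: you verify $C_0^\infty(\R) \subset \cD(\cL^*)$, whereas the paper verifies $H^m(\R) \subset \cD(\cL^*)$ using the same integration-by-parts computation and the boundedness of the coefficients and their derivatives from \eqref{hypbjs}.
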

\begin{proof}
Since $L^2$ is a reflexive Hilbert space we identify $L^2$ with its dual. Thus, the formal adjoint of $\cL$, given by
\[
\begin{aligned}
\cL^* &: \cD(\cL^*) \subset L^2(\R) \to L^2(\R),\\
\cL^* v &= \sum_{k=0}^m (-1)^k \partial_x^k (b_k(x) \overline{v}(x)),
\end{aligned}
\]
has domain (cf. \cite{Kat80}, p. 167)
\[
\cD(\cL^*) = \{ v \in L^2(\R) \, : \, \exists \, f \in L^2 \; \text{such that } \, \langle v, \cL u \rangle_{L^2} = \langle f, u \rangle_{L^2}, \; \text{for all } u \in H^m\}.
\]

It suffices to show that $H^m \subset \cD(\cL^*)$ to conclude that $\cD(\cL^*)$ is dense in $L^2$. Let $v \in H^m$. Then for any $u \in \cD(\cL) = H^m$ we have, after integration by parts, that
\[
\langle v, \cL u \rangle_{L^2} = \langle \cL^* v, u \rangle_{L^2}.
\]
Since clearly $f := \cL^* v \in L^2$ whenever $v \in H^m$, we obtain that $v \in \cD(\cL^*)$. Thus, $\cD(\cL^*)$ is dense in $L^2$.

Now, since $\cL$ is densely defined, $\cL^*$ coincides with the proper adjoint of $\cL$ (see Kato \cite{Kat80}, p. 167). As $\cL^*$ is densely defined, one applies Theorem 5.28 in \cite{Kat80} (p. 168) to conclude that $\cL$ is closable. Moreover, since $L^2$ is a reflexive space, by Theorem 5.29 in \cite{Kat80} we also have that $\cL^*$ is closed and $\cL^{**} = \overline{\cL} \supset \cL$, where $\overline{\cL}$ denotes the smallest closed extension of $\cL$.
\end{proof}

If we further assume that the highest order coefficient is uniformly bounded above and below, that is,
\begin{equation}
\label{bmbded}
0 < C_0 \leq b_m(x) \leq C_1, \qquad x \in \R, \; \; \text{uniform} \; C_j > 0,
\end{equation}
then we have the following
\begin{proposition}
\label{propclosed}
Under hypotheses \eqref{hypbjs} and \eqref{bmbded}, $\cL : \cD(\cL) = H^m(\R) \subset L^2(\R) \to L^2(\R)$ given in \eqref{gendiffop} is a closed operator. 
\end{proposition}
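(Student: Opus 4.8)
The plan is to establish that $\cL$ is closed by showing that its graph norm $u \mapsto \|u\|_{L^2} + \|\cL u\|_{L^2}$ is equivalent to the $H^m$-norm on the domain $\cD(\cL) = H^m(\R)$; since $H^m(\R)$ is complete, this equivalence forces the graph of $\cL$ to be closed. One inequality is immediate: because the coefficients $b_j$ lie in $L^\infty(\R)$ by \eqref{hypbjs}, the map $\cL : H^m(\R) \to L^2(\R)$ is bounded, so $\|u\|_{L^2} + \|\cL u\|_{L^2} \leq C \|u\|_{H^m}$ for a uniform $C > 0$. The substance of the argument lies in the reverse, \emph{a priori} elliptic estimate
\[
\|u\|_{H^m} \leq \widetilde{C}\big( \|\cL u\|_{L^2} + \|u\|_{L^2}\big), \qquad u \in H^m(\R),
\]
and it is precisely here that the new hypothesis \eqref{bmbded} enters.

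To prove the a priori estimate I would first isolate the top-order derivative. Since $b_m(x) \geq C_0 > 0$ by \eqref{bmbded}, multiplication by $b_m^{-1}$ is bounded on $L^2$, so from $b_m \partial_x^m u = \cL u - \sum_{j=0}^{m-1} b_j \partial_x^j u$ one controls $\|\partial_x^m u\|_{L^2}$ by $C_0^{-1}\|\cL u\|_{L^2}$ plus a multiple of $\|u\|_{H^{m-1}}$, using boundedness of the lower-order coefficients. The remaining task is to absorb the intermediate-derivative norm $\|u\|_{H^{m-1}}$. For this I would invoke the standard interpolation inequality on the whole line, namely that for each $0 \leq k < m$ and each $\delta > 0$ there is $C_\delta > 0$ with $\|\partial_x^k u\|_{L^2} \leq \delta \|\partial_x^m u\|_{L^2} + C_\delta \|u\|_{L^2}$ (readily obtained by Plancherel together with the elementary bound $|\xi|^k \leq \delta |\xi|^m + C_\delta$ valid for all $\xi \in \R$). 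Choosing $\delta$ small enough to make the coefficient of $\|\partial_x^m u\|_{L^2}$ strictly less than one lets me move that term to the left-hand side, yielding first a bound for $\|\partial_x^m u\|_{L^2}$ and then, by a second use of interpolation, the full estimate.

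With the norm equivalence in hand, closedness follows routinely: if $u_n \in H^m$ with $u_n \to u$ in $L^2$ and $\cL u_n \to g$ in $L^2$, then $(u_n)$ is Cauchy in the graph norm, hence Cauchy in $H^m$ by the a priori estimate; completeness of $H^m(\R)$ gives $u_n \to u$ in $H^m$, so $u \in \cD(\cL)$, and continuity of $\cL : H^m \to L^2$ forces $\cL u = g$.

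The main obstacle is the a priori estimate, and more specifically the absorption of the lower-order terms through interpolation. This step depends in an essential way on the uniform lower bound \eqref{bmbded} for $b_m$: without it the leading coefficient may vanish (as it does on the degenerate side of the fronts treated in the body of the paper), the operator loses uniform ellipticity, $H^m$ ceases to be the natural domain, and the conclusion fails --- which is exactly why the degenerate case requires the more delicate closed-extension machinery of Proposition \ref{propclosable} rather than this direct argument.
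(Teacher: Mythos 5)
Your proof is correct, but it follows a genuinely different route from the paper. The paper's argument normalizes the operator: it sets $a_k := b_k b_m^{-1}$, notes that \eqref{hypbjs} and \eqref{bmbded} give $a_k \in W^{1,\infty}(\R)$, invokes Lemma 3.1.2 of \cite{KaPro13} to conclude that $\cA := \partial_x^m + a_{m-1}\partial_x^{m-1} + \ldots + a_0\Idd$ is closed on $H^m$, and then transfers closedness back to $\cL$ using that multiplication by $b_m^{-1}$ is bounded, so that $u_j \to u$, $\cL u_j \to v$ in $L^2$ forces $\cA u_j \to b_m^{-1}v$ and hence $u \in H^m$, $\cL u = v$. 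You instead prove the a priori estimate $\|u\|_{H^m} \leq \widetilde{C}\big(\|\cL u\|_{L^2} + \|u\|_{L^2}\big)$ directly, by isolating $b_m\partial_x^m u$, inverting $b_m$ via \eqref{bmbded}, and absorbing the intermediate derivatives through the Plancherel interpolation inequality $\|\partial_x^k u\|_{L^2} \leq \delta\|\partial_x^m u\|_{L^2} + C_\delta\|u\|_{L^2}$; together with boundedness of $\cL : H^m \to L^2$ this gives equivalence of the graph norm with the $H^m$-norm, and closedness follows from completeness of $H^m$. Each approach has its merits: the paper's is shorter, at the cost of citing an external lemma and of needing the derivative bounds $\partial_x b_j \in L^\infty$ from \eqref{hypbjs} to place $a_k$ in $W^{1,\infty}$; yours is self-contained and, notably, uses only the $L^\infty$ bounds on the $b_j$ plus \eqref{bmbded} (no bounds on $\partial_x b_j$), and it delivers the slightly stronger conclusion that the graph norm of $\cL$ is equivalent to $\|\cdot\|_{H^m}$ --- which is essentially the content of the lemma the paper imports. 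Your closing remark on the failure of the argument in the degenerate case, and its connection to the closable-extension route of Proposition \ref{propclosable}, is also consistent with how the paper handles the degenerate operators.
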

\begin{proof}
Define $a_k(x):= b_k(x) b_m(x)^{-1}$, for $k = 0, \ldots, m-1$. Then from \eqref{hypbjs} and \eqref{bmbded} it is clear that $a_k \in W^{1,\infty}(\R)$. Apply Lemma 3.1.2 in reference \cite{KaPro13} (p. 40) to deduce that the operator $\cA : H^m \subset L^2 \to L^2$, $\cA := \partial_x^m + a_{m-1}(x) \partial_x^{m-1} + \ldots + a_0 \Idd$ is closed. Now take any sequence $u_j \in H^m$ such that $u_j \to u$ and $\cL u_j \to v$ in $L^2$. From \eqref{bmbded} we have that $\widetilde{v} = b_m(x)^{-1}v \in L^2$ and that
\[
\| \cA u_j - \widetilde{v} \|_{L^2} \leq C \| \cL u_j - v \|_{L^2} \to 0,
\]
as $j \to \infty$. Since $\cA$ is closed we conclude that $u \in H^m$ and $\cA u = \widetilde{v}$, which is tantamount to $\cL u = v$. Thus, $\cL$ is closed.
\end{proof}

Being the highest order coefficient $b_m$ bounded from above and from below, multiplication by $b_m^{-1}$ is a diffeomorphism that does not affect the Fredholm properties of an operator.

\begin{lemma}
\label{lemJFred}
Under assumptions \eqref{hypbjs} and \eqref{bmbded}, $\cL$ is a Fredholm operator with index $\nu$ if and only if $\cJ := b_m(x)^{-1} \cL : H^m \subset L^2 \to L^2$ is Fredholm with index $\nu$.
\end{lemma}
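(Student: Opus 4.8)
The plan is to exploit that, under the uniform bound \eqref{bmbded}, multiplication by $b_m^{-1}$ is a bounded invertible operator on $L^2$, and that Fredholmness and the index are invariant under composition with such an isomorphism. Concretely, I would first define the multiplication operator $\cM : L^2 \to L^2$, $\cM u := b_m(x)^{-1} u$. Since \eqref{bmbded} gives $C_1^{-1} \leq b_m(x)^{-1} \leq C_0^{-1}$ for all $x \in \R$, the operator $\cM$ is bounded, $\cM \in \ccB(L^2)$, and it is a bijection of $L^2$ onto itself whose inverse $\cM^{-1} u = b_m(x) u$ is again bounded because $b_m \in L^\infty(\R)$ by \eqref{hypbjs}. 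Thus $\cM$ is a topological isomorphism of $L^2$. By definition $\cJ = \cM \cL$, and since $\cM$ is defined on all of $L^2$ we have $\cD(\cJ) = \cD(\cL) = H^m$, so the two operators share the same (dense) domain; moreover both are closed by Proposition \ref{propclosed} (applied to $\cL$, and to $\cJ$ whose leading coefficient is the constant $1$).

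Next I would compute the two Fredholm indicators directly. For the nullity, injectivity of $\cM$ gives $\ker \cJ = \ker(\cM \cL) = \ker \cL$, hence $\nul \cJ = \nul \cL$. For the deficiency, I would use that $\cR(\cJ) = \cM\big(\cR(\cL)\big)$: because $\cM$ is a homeomorphism of $L^2$, it maps closed subspaces to closed subspaces and reflects this property as well, so $\cR(\cJ)$ is closed if and only if $\cR(\cL)$ is closed. Furthermore $\cM$ induces a linear isomorphism between the quotient spaces $L^2 / \cR(\cL)$ and $L^2 / \cR(\cJ)$ (it carries any closed complement of $\cR(\cL)$ bijectively onto a closed complement of $\cR(\cJ)$), whence $\mathrm{def}\, \cJ = \codim \cR(\cJ) = \codim \cR(\cL) = \mathrm{def}\, \cL$.

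Combining these two equalities yields the claim: $\cR(\cL)$ is closed with both $\nul \cL$ and $\mathrm{def}\, \cL$ finite precisely when the same holds for $\cJ$, so $\cL$ is Fredholm if and only if $\cJ$ is; and in that case $\ind \cJ = \nul \cJ - \mathrm{def}\, \cJ = \nul \cL - \mathrm{def}\, \cL = \ind \cL = \nu$. The ``only if'' and ``if'' directions are symmetric, since $\cL = \cM^{-1}\cJ$ with $\cM^{-1}$ also an isomorphism; equivalently, one may invoke the standard additivity of the index under composition, $\ind(\cM \cL) = \ind \cM + \ind \cL$, with $\ind \cM = 0$. There is no real obstacle here: the argument is purely functional-analytic and elementary. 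The only point requiring a modicum of care is the stability of closedness and codimension of the range under the isomorphism $\cM$, which is precisely where the two-sided bound \eqref{bmbded} (rather than mere positivity of $b_m$) is used; without a uniform lower bound on $b_m$ the operator $\cM$ would fail to be bounded, and the equivalence could break down.
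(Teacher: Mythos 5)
Your proposal is correct and follows essentially the same route as the paper: both arguments rest on the fact that, thanks to the two-sided bound \eqref{bmbded}, multiplication by $b_m(x)^{-1}$ is an isomorphism of $L^2$ that preserves the kernel, carries $\cR(\cL)$ onto $\cR(\cJ)$ (hence preserves closedness of the range), and induces a bijection on the quotients, so nullity, deficiency and index all coincide. The paper phrases the range-closedness step with an explicit sequence argument where you invoke the homeomorphism property abstractly, and you additionally note the closedness of $\cJ$ itself, but these are cosmetic differences.
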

\begin{proof}
Suppose $\cL$ is Fredholm with index $\nu$. Take a sequence $v_j \in \cR(\cJ)$ such that $v_j \to v$ in $L^2$. Then there exists $u_j \in H^m$ such that $\cJ u_j = b_m(x)^{-1} \cL u_j = v_j$, that is, $b_m(x) v_j \in \cR(\cL)$. Thanks to \eqref{bmbded} we have that $v_j \to v$ in $L^2$ if and only if $b_m(x) v_j \to b_m(x) v$ in $L^2$. Since $\cR(\cL)$ is closed we obtain $b_m(x) v \in \cR(\cL)$ or, equivalently, $v \in \cR(\cJ)$. Thus, $\cR(\cJ)$ is closed. Moreover, the transformation $\Phi : \cR(\cJ) \to \cR(\cL)$ defined by $\Phi v = b_m(x) v$ is injective and onto, yielding $\codim \cR(\cJ) = \codim \cR(\cL)$. Finally, observe that if $v \in \ker \cJ$ then $0 = \| b_m(x)^{-1} \cL v \|_{L^2} \geq C_1^{-1}\| \cL v\|_{L^2}$, yielding $v \in \ker \cL$. Likewise, $\ker \cL \subset \ker \cJ$. Hence $\nul \cL = \nul \cJ$ and $\cJ$ is Fredholm with index $\nu = \ind \cL$.
\end{proof}

\begin{remark}
\label{remclosed}
In the present study of the spectral properties of linearized operators around diffusion-degenerate traveling fronts, the highest order coefficient is not uniformly bounded below. Nonetheless, we work with the smallest closed extension of the linearized operator and, without loss of generality, we regard it as a closed, densely defined operator in a suitable energy space.
\end{remark}

\section{Generalized convergence of parabolic regularizations}
\label{secparreg}

In this section we extend the technique introduced in \cite{LeP20} (see Section 5 in that reference) to locate the subset $\sigma_\delta$ of the spectrum, to the case of exponentially weighted Sobolev spaces. The method consists of performing a parabolic regularization of the linearized operator around the wave and on establishing the convergence (in the generalized sense) of the family of regularized operators to the degenerate operator as the regularization parameter tends to zero.

\subsection{The regularized operator on exponentially weighted energy spaces}
\label{secregop}

Let $\varphi = \varphi(x)$ be any of the monotone Nagumo fronts of Proposition \ref{propstructure} and consider the following regularization of the diffusion coefficient: for any $\epsilon > 0$, let us define
\begin{equation}
\label{regD}
D^\epsilon(\varphi) := D(\varphi) + \epsilon.
\end{equation}
Note that, for fixed $\epsilon > 0$, $D^\epsilon(\varphi)$ is positive and uniformly bounded below, including the asymptotic limits, $D^\epsilon(u_\pm) = D(u_\pm) + \epsilon > 0$. 

Let us consider now the following family of linearized, regularized operators defined on weighted energy spaces,
\begin{equation}
\label{perturbOpL}
\begin{aligned}
\cL^\epsilon u &:= (D^\epsilon(\varphi) u)_{xx} + cu_x + f'(\varphi)u,\\
\cL^\epsilon &: \cD_a(\cL^\epsilon) \subset L^2_a \to L^2_a,
\end{aligned}
\end{equation}
for some $a \in \R$ and any $\epsilon \geq 0$. Its domain $\cD_a(\cL^\epsilon)$ is a dense subset of $L^2_a$. For instance, we can take
\[
\cD_a(\cL^\epsilon) := H^2_a.
\]
In this fashion, for any $\epsilon > 0$, $\cL^\epsilon$ is a densely defined, closed operator in $L^2_a$, with domain $\cD_a(\cL^\epsilon) = H^2_a$. For $\epsilon = 0$, $\cL^0 = \cL$ is a densely defined, \emph{closable} operator in $L^2_a$. In this case we work with the closed extension of $\cL$ and, without loss of generality, we regard $\cL^\epsilon : L^2_a \to L^2_a$ as a closed, densely defined operator \emph{for any} $\epsilon \geq 0$ with dense domain $\cD_a(\cL^\epsilon) = H^2_a$ (see Section \ref{secdiffop}).

We are interested in computing the spectrum of $\cL^\epsilon$ with respect to the weighted energy space $L^2_a$. It is well known that the location of the essential spectrum depends upon the weight function under consideration (see, e.g., Kapitula and Promislow \cite{KaPro13}, chapter 3), whereas the point spectrum is unmoved (cf. \cite{KaPro13}, or Proposition 3 in \cite{LeP20}). It is also known \cite{KaPro13} that the spectrum of $\cL^\epsilon$ on $L^2_a$ is equivalent to that of the conjugated operator $\cL^\epsilon_a$ acting on the original $L^2$ space, where
\[
\cL_a^\epsilon := e^{ax} \cL^\epsilon e^{-ax} : \cD(\cL_a^\epsilon) \subset L^2 \to L^2,
\]
\[
\cD(\cL_a^\epsilon) := \{ u \in \ccM(\R;\C) \, : \, e^{-ax} u \in H^2_a \} \, \subset \, L^2.
\]
From its definition it is clear that $\cD(\cL_a^\epsilon) = H^2(\R) \subset L^2(\R)$ and, therefore, $\cL_a^\epsilon$ is a densely defined operator in $L^2$. Once again, if $\epsilon > 0$ then $\cL_a^\epsilon$ is a closed operator in $L^2$, whereas if $\epsilon =0$ the operator is closable and we work with the closed extension of $\cL_a^0$ on $L^2$. Hence, without loss of generality, we regard $\cL_a^\epsilon : \cD(\cL_a^\epsilon) = H^2 \subset L^2 \to L^2$, as a family of closed, densely defined operators for each $\epsilon \geq 0$.

If $\varphi$ is any of the monotone traveling fronts in Proposition \ref{propstructure} and $\cL^\epsilon$ is the linear operator defined in \eqref{perturbOpL}, then after straightforward calculations we find that the associated conjugated operator $\cL_a^\epsilon$ is given by
\begin{equation}
\label{conjop}
\begin{aligned}
\cL^\epsilon_a &: \cD(\cL_a) = H^2 \subset L^2 \to L^2,\\
\cL^\epsilon_a &= D^\epsilon(\varphi) \partial_x^2 + b^\epsilon_{1,a}(x) \partial_x + b^\epsilon_{0,a}(x) \Idd,
\end{aligned}
\end{equation}
with coefficients
\begin{equation}
\label{defbepa}
\begin{aligned}
b^\epsilon_{1,a}(x) &:= 2D^\epsilon(\varphi)_x + c - 2a D^\epsilon(\varphi),\\
b^\epsilon_{0,a}(x) &:= a^2D^\epsilon(\varphi) - 2aD^\epsilon(\varphi)_x -ac + D^\epsilon(\varphi)_{xx} + f'(\varphi).
\end{aligned}
\end{equation}

Set $\epsilon = 0$ in \eqref{conjop} and \eqref{defbepa} to obtain the expression of the conjugated operator of the original (non-regularized) operator $\cL$,
\begin{equation}
\label{conjopL}
\begin{aligned}
\cL_a &: \cD(\cL_a) = H^2 \subset L^2 \to L^2,\\
\cL_a &= D(\varphi) \partial_x^2 + b_{1,a}(x) \partial_x + b_{0,a}(x) \Idd,
\end{aligned}
\end{equation}
where
\begin{equation}
\label{defbpa}
\begin{aligned}
b_{1,a}(x) &:= 2D(\varphi)_x + c - 2a D(\varphi),\\
b_{0,a}(x) &:= a^2D(\varphi) - 2aD(\varphi)_x -ac + D(\varphi)_{xx} + f'(\varphi).
\end{aligned}
\end{equation}
Observe that $\cL_a$ and $\cL_a^\epsilon$ have the same domain, $\cD = H^2$.

Notice as well that for every $\epsilon > 0$, $- \, \cL_\omega^\epsilon$ is a
\textit{strongly elliptic} operator acting on $L^2$ and that multiplication by $D^\epsilon (\varphi)^{-1}$ is 
an isomorphism (recall that $0 < \epsilon \leq D^\epsilon(\varphi) \leq C_\epsilon$ for all $x \in \R$ and some uniform $C_\epsilon > 0$). Henceforth, the Fredholm properties of $\mathcal{L}^\epsilon_a - \lambda$ and those of the operator 
$\mathcal{J}^\epsilon_a(\lambda) : L^2 \to L^2$, densely defined as
\begin{equation}
\label{opJl}
\mathcal{J}^\epsilon_a(\lambda) := D^\epsilon(\varphi)^{-1}(\cL_a^\epsilon - \lambda) = \partial_x^2 + D^\epsilon(\varphi)^{-1} b^\epsilon_{1,a}(x) \partial_x + D^\epsilon(\varphi)^{-1} (b^\epsilon_{0,a}(x) - \lambda ) \Idd ,
\end{equation}  
are the same (see Lemma \ref{lemJFred}).

Following a procedure which is customary in the literature on nonlinear waves (cf. \cite{AGJ90,San02,KaPro13}), let us recast the spectral problem \eqref{opJl} as a first order system of the form
\begin{equation}
\label{firstorder}
\bw_x = \A^\epsilon_a(x,\lambda) \bw,
\end{equation}
where
\[
\bw = \begin{pmatrix} u \\ u_x
\end{pmatrix} \in H^1 \times H^1,
\]
and,
\begin{equation}
\label{gencoeff}
\A^\epsilon_a (x,\lambda) = \begin{pmatrix} 0 & 1 \\ D^\epsilon(\varphi)^{-1} (\lambda - b^\epsilon_{0,a}(x)) & -D^\epsilon(\varphi)^{-1} b^\epsilon_{1,a}(x)
\end{pmatrix}.
\end{equation}

It is a well-known fact (see, e.g., Theorem 3.2 in \cite{MRS14}, as well as the references \cite{KaPro13} and \cite{San02}) that the associated first order operators
\[
\mathcal{T}_a^\epsilon(\lambda) =  \partial_x -\mathbb{A}^\epsilon_a(\cdot , \lambda), \qquad \mathcal{T}^\epsilon_a(\lambda) : L^2 \times L^2 \to L^2 \times L^2,
\]
with domain $\cD(\mathcal{T}_a^\epsilon(\lambda)) = H^1 \times H^1$ (and hence, densely defined in $L^2 \times L^2$),
are endowed with the same Fredholm properties as $\mathcal{J}^\epsilon_a(\lambda)$ and, consequently, as $\cL^\epsilon_a - \lambda$. Moreover, these Fredholm properties depend upon the hyperbolicity of the asymptotic coefficient matrices (cf. \cite{San02}) 
\[
\A^{\epsilon,\pm}_a(\lambda) = \lim_{x \to \pm\infty} \A^\epsilon_a (x,\lambda). 
\]
Denoting
\[
\begin{aligned}
b^{\epsilon,\pm}_{0,a} := \lim_{x \to \pm \infty} b^\epsilon_{0,a}(x) &= a^2 D^\epsilon(u_\pm) - ac + f'(u_\pm),\\
b^{\epsilon,\pm}_{1,a} := \lim_{x \to \pm \infty} b^\epsilon_{1,a}(x) &= c - 2aD^\epsilon(u_\pm),
\end{aligned}
\]
then the constant coefficients of the asymptotic systems are given by
\begin{equation}
\label{genasymcoeff}
\A^{\epsilon,\pm}_a(\lambda) = \begin{pmatrix}
0 & 1 \\ D^\epsilon(u_\pm)^{-1} (\lambda - b^{\epsilon,\pm}_{0,a}) & - D^\epsilon(u_\pm)^{-1} b^{\epsilon,\pm}_{1,a}
\end{pmatrix}.
\end{equation}

Let us denote $\pi^{\epsilon,\pm}_a (\lambda,z) = \det (\A^{\epsilon,\pm}_a(\lambda) - z \Id)$, so that, for each $k \in \R$
\[
\pi^{\epsilon,\pm}_a (\lambda,ik) = -k^2 + ik(c D^\epsilon(u_\pm)^{-1} - 2a) + a^2 - D^\epsilon(u_\pm)^{-1} (\lambda + ac - f'(u_\pm)).
\]
Thus, the Fredholm borders, defined as the $\lambda$-roots of $\pi^{\epsilon,\pm}_a(\lambda, ik) = 0$, are given by
\begin{equation}
\lambda^{\epsilon,\pm}_a(k) := D^\epsilon(u_\pm) (a^2 - k^2) -ac + f'(u_\pm) + ik(c - 2a D^\epsilon(u_\pm)), \quad k \in \R.
\end{equation}
Notice that
\[
\max_{k \in \R} \, \Re \lambda^{\epsilon,\pm}_a(k) = D^\epsilon(u_\pm) a^2 -ac + f'(u_\pm).
\]
Therefore, we define the region of consistent splitting for each $a \in \R$ and each $\epsilon \geq 0$ as
\begin{equation}
\label{Omegae}
\Omega(a,\epsilon) := \left\{ \lambda \in \C \, : \, \Re \lambda > \max \, \{ D^\epsilon(u_+) a^2 -ac + 
f'(u_+), \, D^\epsilon(u_-) a^2 -ac + f'(u_-)\} \right\}.
\end{equation}

Now, for each fixed $\lambda \in \C$, $a \in \R$ and $\epsilon > 
0$, let us denote $S_a^{\epsilon,\pm}(\lambda)$ and $U_a^{\epsilon,\pm}(\lambda)$ as the stable and unstable 
eigenspaces of $\A_a^{\epsilon,\pm}(\lambda)$ in $\C^2$, respectively.

\begin{lemma}[consistent splitting]
\label{lemconsplit}
For each $\lambda \in \Omega(a,\epsilon)$, $a \in \R$ and all $\epsilon > 0$, the coefficient matrices $\A_a^{\epsilon, \pm}(\lambda)$ 
have no center eigenspace and 
\[
\dim S_a^{\epsilon,\pm}(\lambda) = \dim U_a^{\epsilon,\pm}(\lambda) = 1.
\]
\end{lemma}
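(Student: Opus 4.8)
The plan is to exploit the explicit characteristic polynomial $\pi^{\epsilon,\pm}_a(\lambda,z) = \det(\A^{\epsilon,\pm}_a(\lambda) - z\Id)$ together with a continuity (homotopy) argument on the connected region $\Omega(a,\epsilon)$. Reading off the companion matrix \eqref{genasymcoeff}, the two eigenvalues $z_1^\pm, z_2^\pm$ of $\A^{\epsilon,\pm}_a(\lambda)$ satisfy
\[
z_1^\pm + z_2^\pm = -D^\epsilon(u_\pm)^{-1} b^{\epsilon,\pm}_{1,a}, \qquad z_1^\pm z_2^\pm = -D^\epsilon(u_\pm)^{-1}\big(\lambda - b^{\epsilon,\pm}_{0,a}\big),
\]
where $D^\epsilon(u_\pm) = D(u_\pm) + \epsilon > 0$ for every $\epsilon > 0$ (this positivity, valid even at a degenerate end point $u_\pm = 0$, is precisely where the regularization is used). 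First I would establish the absence of a center eigenspace, and then count the stable and unstable directions by deforming $\lambda$ to a convenient base point.

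For the absence of a center eigenspace, recall that $\A^{\epsilon,\pm}_a(\lambda)$ possesses a purely imaginary eigenvalue $z = ik$, $k\in\R$, if and only if $\pi^{\epsilon,\pm}_a(\lambda,ik) = 0$, i.e. if and only if $\lambda$ lies on the Fredholm border, $\lambda = \lambda^{\epsilon,\pm}_a(k)$ for some $k$. Since $\Re\lambda^{\epsilon,\pm}_a(k) = D^\epsilon(u_\pm)(a^2 - k^2) - ac + f'(u_\pm)$ is a downward parabola in $k$ (here $D^\epsilon(u_\pm) > 0$ is essential), its maximum over $k\in\R$ is attained at $k=0$ and equals $D^\epsilon(u_\pm) a^2 - ac + f'(u_\pm)$. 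By the very definition of $\Omega(a,\epsilon)$ in \eqref{Omegae}, any $\lambda \in \Omega(a,\epsilon)$ satisfies $\Re\lambda > D^\epsilon(u_\pm) a^2 - ac + f'(u_\pm) \ge \Re\lambda^{\epsilon,\pm}_a(k)$ for all $k$, whence $\lambda \neq \lambda^{\epsilon,\pm}_a(k)$ and neither matrix has an eigenvalue on the imaginary axis. Both matrices are therefore hyperbolic, so $\dim S_a^{\epsilon,\pm}(\lambda) + \dim U_a^{\epsilon,\pm}(\lambda) = 2$ throughout $\Omega(a,\epsilon)$.

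It remains to show each summand equals $1$. The region $\Omega(a,\epsilon)$ is an open right half-plane, hence convex and in particular connected. The eigenvalues $z_{1,2}^\pm$ depend continuously on $\lambda$ (as roots of a quadratic with coefficients continuous in $\lambda$), and by the previous step none of them meets the imaginary axis on $\Omega(a,\epsilon)$; therefore the number of eigenvalues with positive real part is locally constant, and hence constant, on the connected set $\Omega(a,\epsilon)$. I would then compute it at a single convenient point by letting $\lambda$ be real and taking $\lambda \to +\infty$ along the real axis, which stays in $\Omega(a,\epsilon)$ for $\lambda$ large. There $\A^{\epsilon,\pm}_a(\lambda)$ has real entries and $z_1^\pm z_2^\pm = -D^\epsilon(u_\pm)^{-1}(\lambda - b^{\epsilon,\pm}_{0,a}) \to -\infty$; a strictly negative product forces the two roots of a real quadratic to be real of opposite sign, since a complex conjugate pair would have nonnegative product. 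Thus exactly one eigenvalue is stable and one is unstable, giving $\dim S_a^{\epsilon,\pm}(\lambda) = \dim U_a^{\epsilon,\pm}(\lambda) = 1$ on all of $\Omega(a,\epsilon)$.

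The only genuinely delicate point is that, for complex $\lambda$, the matrices $\A^{\epsilon,\pm}_a(\lambda)$ are complex and their eigenvalues are \emph{not} constrained by complex conjugation, so one cannot read off the $(1,1)$ splitting from the sign of a determinant at an arbitrary $\lambda$. This is exactly why the homotopy to a real base point, where the companion matrix becomes real and the product-of-roots criterion applies, is needed; the connectedness of $\Omega(a,\epsilon)$ then transports the count to every $\lambda$ in the region.
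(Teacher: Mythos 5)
Your proof is correct and follows essentially the same route as the paper: hyperbolicity of $\A_a^{\epsilon,\pm}(\lambda)$ on $\Omega(a,\epsilon)$ because the region lies strictly to the right of the Fredholm borders, constancy of the splitting by connectedness and continuity of the eigenvalues, and an explicit count at real $\lambda \gg 1$. The only cosmetic difference is that you deduce the $(1,1)$ splitting from the negative product of the roots (Vieta), whereas the paper writes both roots out via the quadratic formula and checks their signs directly; the two computations are equivalent.
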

\begin{proof}
Follows by elementary facts: thanks to connectedness of $\Omega(a,\epsilon)$ and continuity of $\lambda$, it is clear that the dimensions of the stable and unstable eigenspaces do not change for $\lambda \in \Omega(a,\epsilon)$. Moreover, since the Fredholm curves are defined precisely as the set of points in the complex plane in which there is a center eigenspace, we conclude that the asymptotic matrices are strictly hyperbolic in $\Omega(a,\epsilon)$. To compute the dimensions of the eigenspaces it suffices to take $\lambda \in \R, \, \lambda \gg 1$, sufficiently large. Thus, we find that the characteristic equation $\pi_a^{\epsilon, \pm}(\lambda,z) = 0$ has one positive and one negative root:
\[
\begin{aligned}
z_1 &=  -\tfrac{1}{2} (D^\epsilon(u_\pm))^{-1} b^{\epsilon,\pm}_{1,a} - \tfrac{1}{2} \sqrt{ ((D^\epsilon(u_\pm))^{-1} b^{\epsilon,\pm}_{1,a})^2 + 4(D^\epsilon(u_\pm))^{-1}(\lambda - b^{\epsilon,\pm}_{0,a})} \; < 0,\\
z_2 &=  -\tfrac{1}{2} (D^\epsilon(u_\pm))^{-1} b^{\epsilon,\pm}_{1,a} + \tfrac{1}{2} \sqrt{ ((D^\epsilon(u_\pm))^{-1} b^{\epsilon,\pm}_{1,a})^2 + 4(D^\epsilon(u_\pm))^{-1}(\lambda - b^{\epsilon,\pm}_{0,a})} \;> 0,
\end{aligned}
\]
for $\lambda > 0$ large. The conclusion follows.
\end{proof}

\subsection{Fredholm properties and generalized convergence}

Next, we characterize the Fredholm properties of the operators $\cL_a^\epsilon - \lambda$ for $\lambda$ in the region of consistent splitting.

\begin{lemma}
\label{lemLep}
For all $\epsilon > 0$, $a \in \R$, and for each $\lambda \in \Omega(a,\epsilon)$, the operator $\cL_a^\epsilon - \lambda$ is Fredholm with index zero.
\end{lemma}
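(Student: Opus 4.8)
The plan is to transfer the Fredholm question from the second-order operator $\cL_a^\epsilon - \lambda$ to the associated first-order system, and then to invoke the standard exponential-dichotomy theory on the region of consistent splitting. First I would use that, because $\epsilon > 0$, the coefficient $D^\epsilon(\varphi) = D(\varphi) + \epsilon$ is uniformly bounded below, so that multiplication by $D^\epsilon(\varphi)^{-1}$ is an isomorphism of $L^2$; hence $\cL_a^\epsilon - \lambda$ and $\mathcal{J}_a^\epsilon(\lambda)$ share the same Fredholm properties (Lemma \ref{lemJFred}). By the equivalence recorded after \eqref{gencoeff} (see \cite{MRS14,San02,KaPro13}), these are in turn the same as the Fredholm properties of the first-order operator $\mathcal{T}_a^\epsilon(\lambda) = \partial_x - \A_a^\epsilon(\cdot,\lambda)$ acting on $L^2 \times L^2$ with domain $H^1 \times H^1$. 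It therefore suffices to prove that $\mathcal{T}_a^\epsilon(\lambda)$ is Fredholm of index zero.

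Next I would appeal to the classical result (Palmer's theorem; see also \cite{San02,KaPro13}) that a first-order operator $\partial_x - \A(\cdot,\lambda)$ whose coefficient matrix converges to hyperbolic limits $\A_a^{\epsilon,\pm}(\lambda)$ as $x \to \pm\infty$ is Fredholm, with index equal to the difference of the dimensions of the unstable subspaces at the two end states,
\[
\ind \mathcal{T}_a^\epsilon(\lambda) = \dim U_a^{\epsilon,-}(\lambda) - \dim U_a^{\epsilon,+}(\lambda).
\]
The hypothesis $\lambda \in \Omega(a,\epsilon)$ is exactly what guarantees, via Lemma \ref{lemconsplit}, that both asymptotic matrices are strictly hyperbolic (no center eigenspace) and that $\dim S_a^{\epsilon,\pm}(\lambda) = \dim U_a^{\epsilon,\pm}(\lambda) = 1$. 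Consequently the two unstable dimensions coincide and the index vanishes.

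I do not expect a serious obstacle here: the whole point of the parabolic regularization $\epsilon > 0$ is precisely to restore strong ellipticity of $-\cL_a^\epsilon$ and hyperbolicity of the asymptotic matrices, so that the standard Fredholm and dichotomy machinery applies verbatim, in sharp contrast to the degenerate case $\epsilon = 0$ where hyperbolicity fails at the degenerate end point. The only point requiring a little care is the bookkeeping in the index formula; but since $\Omega(a,\epsilon)$ forces equal stable and unstable dimensions at both $\pm\infty$, the index is automatically zero and no further computation is needed.
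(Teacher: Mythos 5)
Your proposal is correct and follows essentially the same route as the paper: reduce to the first-order operator $\mathcal{T}_a^\epsilon(\lambda)$ via Lemma \ref{lemJFred} and the equivalence of Fredholm properties, then use the hyperbolicity of $\A_a^{\epsilon,\pm}(\lambda)$ for $\lambda \in \Omega(a,\epsilon)$ (Lemma \ref{lemconsplit}) together with exponential-dichotomy theory to conclude the index is zero. The paper cites Coppel and Theorems 3.2--3.3 of Sandstede where you invoke Palmer's theorem, but these are the same result, and your index formula $\dim U_a^{\epsilon,-}(\lambda) - \dim U_a^{\epsilon,+}(\lambda)$ agrees with the paper's bookkeeping since both dimensions equal one.
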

\begin{proof}
If we fix $\lambda \in \Omega(a,\epsilon)$ then the matrices $\A_a^{\epsilon,\pm}(\lambda)$ are hyperbolic. Therefore, by standard exponential dichotomies theory \cite{Cop78} (see also Theorem 3.3 in \cite{San02}), system \eqref{firstorder} is endowed with exponential dichotomies on both rays $[0,+\infty)$ and $(-\infty,0]$, with Morse indices $i_a^{\epsilon,+}(\lambda) = \dim U_a^{\epsilon,+}(\lambda) = 1$ and $i_a^{\epsilon,-}(\lambda) = \dim S_a^{\epsilon, -}(\lambda) = 1$, respectively.  Apply Theorem 3.2 in \cite{San02} to conclude that the operators $\mathcal{T}_a^{\epsilon}(\lambda)$ are Fredholm with index $\ind \mathcal{T}_a^\epsilon(\lambda) = i_a^{\epsilon,+}(\lambda) - i_a^{\epsilon,-}(\lambda) = 0$, showing that $\mathcal{J}_a^\epsilon(\lambda)$ and $\cL_a^\epsilon - \lambda$ are Fredholm with index zero, as claimed.
\end{proof}

For later use, it is convenient to define the set
\begin{equation}
\label{defOmegaa}
\begin{aligned}
\Omega(a) &:= \Omega(a,0) \\&= \left\{ \lambda \in \C \, : \, \Re \lambda > \max \, \{ D(u_+) a^2 -ac + 
f'(u_+), \, D(u_-) a^2 -ac + f'(u_-)\} \right\},
\end{aligned}
\end{equation} 
which is independent of $\epsilon > 0$ and is actually the region of consistent splitting (see section \S \ref{secexpwNd} below).

We now prove that in the generalized limit when $\epsilon \to 0^+$, the regularized operators conserve their Fredholm properties.  First, let us remind the reader the following standard definitions from Functional Analysis (cf. Kato \cite{Kat80}): Let $Z$ be a Banach 
space, and let $M$ and $N$ be any two nontrivial closed subspaces of $Z$. Let $S_M$ be the unitary sphere in 
$M$. Then one defines
\[
\begin{aligned}
\mathrm{d}(M,N) &:= \sup_{u\in S_M} \mathrm{dist}(u,S_N),\\
\mathrm{\hat{d}}(M,N) &:= \max \{\mathrm{d}(M,N),\mathrm{d}(N,M)\}.
\end{aligned}
\]
$\mathrm{\hat{d}}(M,N)$ is called the \textit{distance} between $M$ and $N$ and satisfies the triangle 
inequality. (Here the function $\mathrm{dist}(u, M)$ is the 
usual distance function from $u$ to any closed manifold $M$.)

\begin{definition}
Let $X,Y$ be Banach spaces. If $\mathcal{T}, \mathcal{S} \in \mathscr{C}(X,Y)$, then the graphs 
$G(\mathcal{T}), G(\mathcal{S})$ 
are closed subspaces of $X\times Y$, and we set 
\begin{equation*}
\mathrm{d}(\mathcal{T},\mathcal{S}) = \mathrm{d}(G(\mathcal{T}), G(\mathcal{S})) ,
\end{equation*}
\begin{equation*}
\mathrm{\hat{d}}(\mathcal{T},\mathcal{S}) = \max 
\{\mathrm{d}(\mathcal{T},\mathcal{S}),\mathrm{d}(\mathcal{S},\mathcal{T})\}.
\end{equation*}
It is said that a sequence $\mathcal{T}_n \in \mathscr{C}(X,Y)$ \textit{converges in generalized sense} to 
$\mathcal{T}  \in \mathscr{C}(X,Y)$ provided that 
$\mathrm{\hat{d}}(\mathcal{T}_n,\mathcal{T}) \to 0$ as $n \to +\infty$.
\end{definition}

\begin{lemma}
\label{lemconv}
For each fixed $\lambda \in \C$ and each $a \in \R$, the operators $\cL_a^\epsilon - \lambda$ converge in generalized sense to $\cL_a - \lambda$ as $\epsilon \to 0^+$.
\end{lemma}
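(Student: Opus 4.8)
The plan is to reduce the statement to a quantitative bound on the gap between the graphs of the two closed operators. Since $D^\epsilon(\varphi) = D(\varphi)+\epsilon$ enters the coefficients \eqref{defbepa} only through the additive constant $\epsilon$, the first step is the explicit computation obtained by subtracting \eqref{defbpa} from \eqref{defbepa}: all the $D(\varphi)_x$ and $D(\varphi)_{xx}$ contributions cancel and one is left with
\[
\cL_a^\epsilon - \cL_a = \epsilon\,\partial_x^2 - 2a\epsilon\,\partial_x + a^2\epsilon\,\Idd = \epsilon\,(\partial_x - a)^2 .
\]
Thus the two operators differ by $\epsilon$ times the fixed, $\epsilon$-independent operator $(\partial_x-a)^2$, which is bounded from $H^2$ into $L^2$, and they share the common domain $\cD(\cL_a^\epsilon)=\cD(\cL_a)=H^2$ (for $\epsilon=0$ we use the closed extension). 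Consequently generalized convergence is equivalent to $\mathrm{\hat{d}}(\cL_a^\epsilon-\lambda,\cL_a-\lambda)\to 0$, and the whole problem becomes an estimate on this gap.

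Next I would bound the two one-sided gaps. Given $u\in H^2$ on the graph sphere of $\cL_a^\epsilon-\lambda$, that is $\|u\|_{L^2}^2+\|(\cL_a^\epsilon-\lambda)u\|_{L^2}^2=1$, the simplest competitor in $G(\cL_a-\lambda)$ is the pair with the same first coordinate, which gives
\[
\mathrm{d}\big((u,(\cL_a^\epsilon-\lambda)u),\,G(\cL_a-\lambda)\big)\le \|(\cL_a^\epsilon-\cL_a)u\|_{L^2}=\epsilon\,\|(\partial_x-a)^2u\|_{L^2}.
\]
Hence it suffices to prove that $\epsilon\,\|(\partial_x-a)^2u\|_{L^2}\to 0$ uniformly over this sphere (and to treat the reversed direction analogously, there using the elliptic regularity of $\cL_a^\epsilon$ to select the competitor). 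Since $\|(\partial_x-a)^2u\|\le \|u_{xx}\|+2|a|\,\|u_x\|+a^2\|u\|$ and $\|u\|\le 1$ on the sphere, everything reduces to the uniform decay $\epsilon\|u_x\|\to 0$ and $\epsilon\|u_{xx}\|\to 0$.

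The core of the argument is a localized a priori estimate obtained by splitting $\R$ according to the size of $D(\varphi)$. On the region $\{x:D(\varphi(x))\ge\delta\}$ the operator $\cL_a^\epsilon$ is uniformly elliptic with ellipticity constant $\ge\delta$ \emph{independently of} $\epsilon$, so elliptic estimates control $\|u_{xx}\|$ uniformly there and give $\epsilon\|u_{xx}\|=O(\epsilon)$. On the complementary asymptotic region, where $\varphi$ approaches the degenerate limit $u_\pm$, I would freeze coefficients at the endpoint, using that $\cL_a^\epsilon$ is asymptotic to the constant-coefficient operator $D^\epsilon(u_\pm)\partial_x^2+b^{\epsilon,\pm}_{1,a}\partial_x+b^{\epsilon,\pm}_{0,a}$; testing the spectral equation against $u$ and against $u_{xx}$ and integrating by parts produces an energy estimate whose error terms are controlled by the exponential or algebraic decay of $\varphi$ and $\varphi_x$ from Lemmas \ref{lemdecaysN}--\ref{lemdecayNn}, allowing one to absorb the $\epsilon u_{xx}$ and $\epsilon u_x$ contributions into $\|(\cL_a^\epsilon-\lambda)u\|$ and $\|u\|$ with a gain of a power of $\epsilon$.

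The main obstacle is precisely this asymptotic region at the degenerate endpoint, where $D^\epsilon(u_\pm)=\epsilon\to 0$ and $\cL_a^\epsilon$ loses uniform ellipticity: the naive elliptic bound $\|u\|_{H^2}\le C(\epsilon)\,(\,\cdots)$ has $C(\epsilon)\sim\epsilon^{-1}$ and is useless, since high-frequency packets localized far out (frequency $\sim\epsilon^{-1/2}$ or larger, where $D^\epsilon\approx\epsilon$) can keep $\epsilon\|u_{xx}\|$ from decaying. What rescues the estimate is the first-order coefficient at the endpoint, $b^{\epsilon,\pm}_{1,a}\to c-2aD(u_\pm)$: when this limit is nonzero the drift dominates the high-frequency part of $\|(\cL_a^\epsilon-\lambda)u\|$ and forces $\|u_x\|$ to remain bounded, after which $\epsilon\|u_{xx}\|\to 0$ follows. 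This holds transparently for the non-degenerate fronts \textbf{[Nn]} and for the traveling degenerate fronts \textbf{[Nd]} (where $c>\bbc(\alpha)>0$); the stationary case, in which the drift degenerates, is the delicate one and is the place where the precise asymptotic decay of the profile must be exploited. This is exactly the step that has to be checked on a case-by-case basis, and it is where the choice of the weight $a$ and the sign of the speed $c$ enter decisively.
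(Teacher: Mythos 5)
Your opening steps coincide with the paper's own proof of this lemma: the paper also fixes $v=(p,(\cL_a^\epsilon-\lambda)p)$ on the graph sphere, takes the competitor $w=(p,(\cL_a-\lambda)p)$ with the same first coordinate, and thereby reduces the gap to $\sup\,\epsilon\|p_{xx}\|$ over the graph sphere (your identity $\cL_a^\epsilon-\cL_a=\epsilon(\partial_x-a)^2$ is in fact more accurate than the paper's $\epsilon\,\partial_x^2$, which drops the first- and zeroth-order terms). The paper then closes by quoting Kato's Remark IV.1.5 to get $\|p_{xx}\|\le C_\lambda(\|p\|+\|(\cL_a^\epsilon-\lambda)p\|)$, tacitly treating $C_\lambda$ as independent of $\epsilon$. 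You have correctly isolated the point the paper glosses over: that constant comes from the closed graph theorem applied to $\cL_a^\epsilon$ itself, so it depends on $\epsilon$, and when the front is degenerate it necessarily blows up like $\epsilon^{-1}$, because relative boundedness of $\partial_x^2$ with respect to $\cL_a$ fails outright at $\epsilon=0$. Your resolution of the non-degenerate case \textbf{[Nn]} by uniform ellipticity is correct (and there the regularization is not needed anyway).

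The genuine gap is in the degenerate cases, and it is structural rather than technical: the statement you reduce to, namely $\epsilon\|u_{xx}\|\to 0$ uniformly over the graph sphere, is \emph{false} for every degenerate front, including case \textbf{[Nd]} where you claim the drift rescues it. Take $u_\epsilon(x)=e^{ikx}\chi(x-x_\epsilon)$ with $k=c/\epsilon$ and $x_\epsilon$ so far on the degenerate side that $D(\varphi)+|D(\varphi)_x|+|D(\varphi)_{xx}|\le k^{-4}$ on the support of $\chi(\cdot-x_\epsilon)$; then, up to $o(1)$ errors, $(\cL_a^\epsilon-\lambda)u_\epsilon\approx\big(-\epsilon k^2+ikc+b^{-}_{0,a}-\lambda\big)u_\epsilon$ with $\epsilon k^2=kc=c^2/\epsilon$, and after normalizing the graph norm one finds $\epsilon\|\partial_x^2u_\epsilon\|\to 1/\sqrt{2}$. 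The drift does not make $\epsilon\|u_{xx}\|$ small on the graph sphere; it makes $\|(\cL_a^\epsilon-\lambda)u_\epsilon\|$ large, which the normalization has already absorbed. What a nonzero drift actually buys is different: at these frequencies the graph of the \emph{limit} operator is nearly vertical, so the competitor matching the \emph{second} coordinate is $O(\epsilon)$-close; equivalently, one should prove norm convergence of resolvents, which is equivalent to generalized convergence (\cite{Kat80}, Theorem IV.2.25). So case \textbf{[Nd]} can be handled, but only by abandoning the fixed first-coordinate competitor on which your scheme (and the paper's) rests. In the stationary case \textbf{[sN}$\mathbb{\gtrless}$\textbf{]} ($c=0$, $a=0$), which you leave open, no refinement of the estimates can work: the far-field comparison is $\epsilon\partial_x^2+f'(0)$ versus multiplication by $f'(0)$, whose resolvents stay at norm distance at least $1/|f'(0)-\zeta|$ for every $\epsilon>0$; transplanting the same wave packets (now with $\epsilon k^2\to\infty$) and using $\mathrm{dist}\big((0,g),G(T)\big)^2=\langle (I+TT^*)^{-1}g,g\rangle$ for the closed extension $T=\overline{\cL_a}-\lambda$ produces unit graph vectors $v_\epsilon$ of $\cL_a^\epsilon-\lambda$ with $\mathrm{dist}\big(v_\epsilon,G(T)\big)\ge (1+|f'(0)-\lambda|^2)^{-1/2}-o(1)$. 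The decay rate of the profile only sharpens the localization and cannot close this: in that case the asserted convergence itself fails, so the difficulty you flagged in your final paragraph is not a delicate case awaiting a finer estimate but a breakdown of the statement — and of the paper's proof, whose hidden assumption of $\epsilon$-uniformity is exactly what you were trying, unavoidably in vain along this route, to justify.
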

\begin{proof}
It is to be observed that, from the definition of $\mathrm{d}(\cdot,\cdot)$, we have
\begin{equation*}
\mathrm{d}(\cL_a^\epsilon -\lambda,\cL_a -\lambda) = \mathrm{d}(G(\cL_a^\epsilon 
-\lambda),G(\cL_a -\lambda)) = \sup_{v\in S_{G(\cL_a^\epsilon -\lambda)}} \!\!
\left ( \inf_{w\in S_{G(\cL_a -\lambda)}} \| v-w \|  \right ),
\end{equation*}
where $S_{G(\cL_a -\lambda)}$ (respectively, $S_{G(\cL^\epsilon_a -\lambda)}$) is the unit sphere in the graph $G(S_{G(\cL_a -\lambda)})$ (respectively, $G(S_{G(\cL^\epsilon_a -\lambda)})$). Now let $v\in S_{G(\cL_a^\epsilon -\lambda)}$ be such that $v = (p,(\cL_a^\epsilon -\lambda)p) \in L^2 \times L^2$ for 
some $p\in \mathcal{D}(\cL_a^\epsilon -\lambda) = H^2$ and with
\begin{equation*}
\| v\| ^2_{L^2 \times L^2} = \| p \| ^2_{L^2} + \| (\cL_a^\epsilon -\lambda)p \| ^2_{L^2} =1.
\end{equation*}
Likewise, let $w\in S_{G(\cL_a -\lambda)}$ be such that $w = (u,(\cL_a -\lambda)u)$ for some $u\in 
\mathcal{D}(\cL_a -\lambda) = H^2$ and
\begin{equation*}
\| w\| ^2_{L^2 \times L^2} = \| u \| ^2_{L^2} + \| (\cL_a -\lambda)u \| ^2_{L^2} =1.
\end{equation*}

Let us find a upper bound for $\| v-w\|_{L^2 \times L^2}$ so that the set consisting of the infima taken over $S_{G(\cL_a -\lambda)}$ is bounded and therefore the supremum over $S_{G(\cL^\epsilon_a -\lambda)}$ is bounded too. Let us write 
\begin{equation}
\label{graphNorm}
\| v-w \| ^2_{L^2 \times L^2} = \| p-u \| ^2_{L^2} +\| (\cL_a^\epsilon -\lambda)p -(\cL_a 
-\lambda)u \| ^2_{L^2}.
\end{equation}
If we keep $v \in S_{G(\cL_a^\epsilon -\lambda)}$ fixed then it suffices to take $w = (p,(\cL_a 
-\lambda)p)$ because $p \in H^2 = \mathcal{D}(\cL_a) = \mathcal{D}(\cL_a^\epsilon)$. Note 
that $(\cL_a -\lambda)p = (\cL_a^\epsilon -\lambda)p - \epsilon p_{xx}$. Therefore expression 
\eqref{graphNorm} gets simplified to
\begin{equation*}
\| v-w \| ^2_{L^2 \times L^2} = \| (\cL_a^\epsilon -\lambda)p -(\cL_a -\lambda)p \| ^2_{L^2} = \| 
\epsilon p_{xx} \| ^2_{L^2}.
\end{equation*}

If we regard $\partial_x^2$ as a closed, densely defined operator on $L^2$, with domain $\cD = 
H^2$, then it follows from Remark 1.5 in \cite[p. 191]{Kat80}, that $\partial_x^2$ is 
$(\cL_a^\epsilon -\lambda)-$bounded, i.e., there exist a constant $C_\lambda>0$ such that 
\begin{equation*}
\| p_{xx} \|_{L^2} \leq C_\lambda (\| p \| _{L^2} + \| (\cL_a^\epsilon -\lambda)p \| _{L^2} ),
\end{equation*}
for all $p \in H^2$. Observe that the constant $C_\lambda$ may depend on $\lambda \in \C$, fixed. Consequently,
\[ \| p_{xx} \|^2_{L^2} \leq \bar{C}_\lambda (\| p \| ^2_{L^2}+\| (\cL_a^\epsilon -\lambda)p \| 
^2_{L^2})=\bar{C}_\lambda,
\]
for some other $\bar{C}_\lambda > 0$ and for $v = (p, (\cL^\epsilon - \lambda)p) \in S_{G(\cL_a^\epsilon 
-\lambda)}$. This estimate implies, in turn, that
\[ 
\| v-w \| ^2_{L^2 \times L^2} = \epsilon^2 \| p_{xx} \| ^2_{L^2} \leq \bar{C}_\lambda \epsilon^2,
\]
yielding
\begin{equation*}
\mathrm{d}(\cL_a^\epsilon -\lambda,\cL_a -\lambda) \leq \bar{C}_\lambda\epsilon^2.
\end{equation*}
In a similar fashion it can be proved that 
$\mathrm{d}(\cL_a -\lambda,\cL_a^\epsilon -\lambda) = O(\epsilon^2)$.
This shows that $\hat{\mathrm{d}}(\cL_a^\epsilon -\lambda,\cL_a -\lambda) \to 0$
as $\epsilon\to 0 ^{+}$ and the lemma is proved.
\end{proof}

Now, let us recall the definition of the reduced minimum modulus of a closed operator: for any $\mathcal{S}\in \mathscr{C}(X,Y)$ we define $\gamma= \gamma(\mathcal{S})$ as the greatest number 
$\gamma \in \mathbb{R}$ such that  
\[ 
\| \mathcal{S}u \|  \geq \gamma \: \mathrm{dist}(u, \ker \mathcal{S}), 
\]
for all $u \in  \mathcal{D}(\mathcal{\mathcal{S}})$. It is well-known that $\cR(\cS)$ is closed if and only if $\gamma(\cS) > 0$ (cf. \cite{EE87,Kat80}). The following classical result of functional analysis due to Kato, known as the general stability theorem, is the main tool to relate the Fredholm properties of $\cL_a^\epsilon - \lambda$ to those of $\cL _a- \lambda$ (see \cite{Kat80}, p. 235).

\begin{theorem}[Kato's stability theorem]
\label{thmindexKato}
Let $\mathcal{T}, \mathcal{S} \in \mathscr{C}(X,Y)$ and let $\mathcal{T}$ be Fredholm (semi-Fredholm). If 
\[
\mathrm{\hat{d}}(\mathcal{S},\mathcal{T})< \gamma (1+ \gamma^2)^{-1/2},
\] 
where $\gamma=\gamma(\mathcal{T})$, then $\mathcal{S}$ is Fredholm (semi-Fredholm) and $\nul \mathcal{S} 
\leq \nul \mathcal{T}$, $\mathrm{def}\, \mathcal{S} \leq \mathrm{def}\, \mathcal{T}$. Furthermore, there 
exists 
$\delta> 0$ such that $\mathrm{\hat{d}}(\mathcal{S},\mathcal{T})<\delta$ implies 
\[ 
\ind \mathcal{S} = \ind \mathcal{T}. 
\]
If $X,Y$ are Hilbert spaces then we can take $\delta = \gamma (1+ \gamma^2)^{-\frac{1}{2}}$.
\end{theorem}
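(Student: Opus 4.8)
The plan is to prove this as a purely functional-analytic statement about closed operators and their graphs, following the circle of ideas around the gap between closed subspaces; since $\mathcal{T}$ is semi-Fredholm its range is closed, so the reduced minimum modulus satisfies $\gamma = \gamma(\mathcal{T}) > 0$ and all the estimates below are meaningful. First I would reformulate everything in terms of the graphs $G(\mathcal{T}), G(\mathcal{S}) \subset X \times Y$, which are closed subspaces precisely because $\mathcal{T}, \mathcal{S}$ are closed, and recall the basic subspace-stability fact: if two closed subspaces $M, N$ of a Banach space satisfy $\hat{\mathrm{d}}(M,N) < 1$, then $\dim M = \dim N$ and $\codim M = \codim N$ (all in the extended sense, with $\infty$ allowed), while $\mathrm{d}(M,N) < 1$ already forces $\dim M \le \dim N$. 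The whole theorem then becomes an exercise in transporting the finite-dimensional data $\nul$ and $\mathrm{def}$ across the small gap $\hat{\mathrm{d}}(\mathcal{S}, \mathcal{T})$.

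The role of $\gamma$ is the key structural point, and I would isolate it first. The nullity $\nul \mathcal{T}$ equals the dimension of $N_{\mathcal{T}} := G(\mathcal{T}) \cap (X \times \{0\}) = \ker \mathcal{T} \times \{0\}$, and similarly for $\mathcal{S}$. A gap between the graphs does not a priori control the gap between these particular subspaces; this is exactly where $\gamma$ intervenes. Given $(u,0) \in N_{\mathcal{S}}$ of unit norm, the definition of $\mathrm{d}(\mathcal{S},\mathcal{T})$ produces $(v, \mathcal{T}v) \in G(\mathcal{T})$ with $\|u - v\|$ and $\|\mathcal{T}v\|$ both of order $\hat{\mathrm{d}}(\mathcal{S},\mathcal{T})$; the inequality $\|\mathcal{T}v\| \ge \gamma\,\mathrm{dist}(v, \ker \mathcal{T})$ then pushes $v$ to within $\gamma^{-1}\|\mathcal{T}v\|$ of $\ker \mathcal{T}$, so that $(u,0)$ lies close to $N_{\mathcal{T}}$. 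Optimizing this estimate — the factor $(1+\gamma^2)^{1/2}$ arises from comparing the operator norm with the graph norm on a complement of $\ker \mathcal{T}$, where $\|(u,\mathcal{T}u)\| \ge (1+\gamma^2)^{1/2}\|u\|$ — yields $\mathrm{d}(N_{\mathcal{S}}, N_{\mathcal{T}}) < 1$ exactly under the hypothesis $\hat{\mathrm{d}}(\mathcal{S},\mathcal{T}) < \gamma(1+\gamma^2)^{-1/2}$, whence $\nul \mathcal{S} \le \nul \mathcal{T}$.

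For the deficiency bound and the preservation of closed range I would pass to adjoints: $\mathcal{T}^*, \mathcal{S}^* \in \mathscr{C}(Y^*, X^*)$ satisfy $\gamma(\mathcal{T}^*) = \gamma(\mathcal{T})$ and $\hat{\mathrm{d}}(\mathcal{S}^*, \mathcal{T}^*) = \hat{\mathrm{d}}(\mathcal{S}, \mathcal{T})$ (the gap is invariant under the graph-flip $(\cdot, \cdot) \mapsto (\cdot, -\cdot)$ realizing adjunction), while $\mathrm{def}\,\mathcal{S} = \nul \mathcal{S}^*$ once ranges are closed. Applying the nullity argument of the previous step to the adjoints gives $\mathrm{def}\,\mathcal{S} \le \mathrm{def}\,\mathcal{T}$ and, simultaneously, that $\cR(\mathcal{S})$ is closed, so $\mathcal{S}$ is semi-Fredholm — Fredholm if $\mathcal{T}$ is, since then both quantities are finite. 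In Hilbert spaces, which is the only case needed for the application to $\cL_a^\epsilon - \lambda$, orthogonal complements render the adjoint identities transparent and make the constant sharp.

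Finally, for the constancy of the index I would observe that the two inequalities just proved show $\nul$ and $\mathrm{def}$ are upper semicontinuous along the gap, so they can only drop under a small perturbation; to rule out a compensating simultaneous drop that would alter $\ind = \nul - \mathrm{def}$, I would run the same comparison in the reverse direction (bounding $\nul \mathcal{T} \le \nul \mathcal{S}$ and $\mathrm{def}\,\mathcal{T} \le \mathrm{def}\,\mathcal{S}$ once $\mathcal{S}$ is known to be semi-Fredholm with $\gamma(\mathcal{S})$ bounded below) and invoke the integer-valuedness of the index together with the connectedness of the admissible gap-ball to conclude that $\ind$ is locally constant, producing the threshold $\delta$; in the Hilbert-space setting the sharp value $\delta = \gamma(1+\gamma^2)^{-1/2}$ is retained. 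The main obstacle, in my view, is the deficiency half: transferring closedness of the range and the codimension bound is not symmetric to the kernel argument at the level of the graph, and the cleanest route is genuinely through the adjoint, which in a general Banach space requires some care with annihilators and with the identity $\mathrm{def}\,\mathcal{S} = \nul\,\mathcal{S}^*$; the nullity bound, by contrast, is the soft and direct part.
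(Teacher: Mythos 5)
First, a point of reference: the paper does \emph{not} prove this statement at all --- it is imported verbatim as a classical result, with a citation to Kato's book (\cite{Kat80}, Theorem IV.5.17, p.~235) --- so your proposal can only be measured against Kato's own proof. Your overall architecture is in fact faithful to it: Kato passes to graphs and applies his stability theorem for semi-Fredholm \emph{pairs of subspaces} to the pair $\left( G(\mathcal{T}), X \times \{0\} \right)$, whose ``minimum gap'' is exactly $\gamma(1+\gamma^2)^{-1/2}$ (this is where your constant comes from: for $u=(x,\mathcal{T}x)$ one has $\mathrm{dist}(u, X\times\{0\}) / \mathrm{dist}(u, \ker \mathcal{T} \times \{0\}) \geq \gamma(1+\gamma^2)^{-1/2}$); the nullity bound via $\|\mathcal{T}v\| \geq \gamma\,\mathrm{dist}(v,\ker\mathcal{T})$ and the treatment of the deficiency by duality (the annihilator $G(\mathcal{T})^{\perp}$ being, up to a flip and a sign, the graph of $\mathcal{T}^*$, with the gap invariant under annihilators) are both genuinely Kato's steps.

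There are, however, two gaps, one repairable and one serious. The repairable one: the adjoint argument yields $\nul \mathcal{S}^* = \codim \overline{\cR(\mathcal{S})} \leq \mathrm{def}\,\mathcal{T}$, but it does \emph{not} yield that $\cR(\mathcal{S})$ is closed; closedness is equivalent to strict positivity of the minimum gap of the perturbed pair and needs its own argument (in Kato, a reduction to the case of trivial kernel, where the minimum gap is quantitatively stable). The serious one is the index statement. Your ``reverse comparison'' requires $\mathrm{\hat{d}}(\mathcal{S},\mathcal{T}) < \gamma(\mathcal{S})\bigl(1+\gamma(\mathcal{S})^2\bigr)^{-1/2}$, and there is no lower bound for $\gamma(\mathcal{S})$ in terms of $\gamma(\mathcal{T})$ and the gap: take $\mathcal{T} = \mathrm{diag}(0,1,1,\dots)$ and $\mathcal{S} = \mathrm{diag}(\epsilon,1,1,\dots)$ on $\ell^2$. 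Then $\mathrm{\hat{d}}(\mathcal{S},\mathcal{T}) = O(\epsilon)$ and $\gamma(\mathcal{T})=1$, but $\gamma(\mathcal{S}) = \epsilon$, so the reverse hypothesis fails; and indeed $\nul \mathcal{S} = \mathrm{def}\,\mathcal{S} = 0 < 1 = \nul \mathcal{T} = \mathrm{def}\,\mathcal{T}$, so the equalities your reverse comparison would produce are simply false --- nullity and deficiency are only upper semicontinuous and genuinely drop. Consequently ``integer-valuedness plus connectedness of the gap-ball'' is circular: local constancy of the index is precisely what has to be proven, and upper semicontinuity of $\nul$ and $\mathrm{def}$ separately does not give it. The actual content of the theorem is that the two drops always coincide, and Kato's proof of this (reduction to the zero-nullity case, then a dimension count showing that each kernel direction lost under perturbation is exactly compensated in the range) is the part your sketch does not contain.
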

\begin{remark}
The original statement of Kato's theorem is expressed in terms of the \emph{gap} between the graphs; the distance and the gap are, however, equivalent (see Kato \cite{Kat80}, p. 202).
\end{remark}

We apply Kato's theorem in order to obtain the following result, which is the main tool to locate the subset of the compression spectrum, $\sigma_\delta(\cL_a)$, in the degenerate front case.
\begin{lemma}
\label{lemLsemi}
Suppose that $\cL_a - \lambda$ is semi-Fredholm, for $a \in \R$, $\lambda \in \C$. Then for each $0 < \epsilon \ll 1$ sufficiently small $\cL^\epsilon_a - \lambda$ is semi-Fredholm and $\ind(\cL^\epsilon_a - \lambda) = \ind(\cL_a - \lambda)$.
\end{lemma}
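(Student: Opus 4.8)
The plan is to deduce this directly from Kato's general stability theorem (Theorem \ref{thmindexKato}), feeding it the generalized convergence already established in Lemma \ref{lemconv} and exploiting the fact that the underlying space is the Hilbert space $L^2$. The argument is essentially a verification that the smallness hypothesis of Kato's theorem is met for $\epsilon$ small, with the roles reversed from the previous lemma: here the \emph{degenerate} operator plays the part of the reference semi-Fredholm operator.

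First I would set $\mathcal{T} := \cL_a - \lambda$, which is semi-Fredholm by assumption. Since a semi-Fredholm operator has closed range, the characterization of the reduced minimum modulus recalled above (namely, $\cR(\mathcal{S})$ closed if and only if $\gamma(\mathcal{S}) > 0$) yields $\gamma := \gamma(\cL_a - \lambda) > 0$; hence the Kato threshold $\gamma(1+\gamma^2)^{-1/2}$ is strictly positive. By Lemma \ref{lemconv} the operators $\cL_a^\epsilon - \lambda$ converge to $\cL_a - \lambda$ in the generalized sense, with $\mathrm{\hat{d}}(\cL_a^\epsilon - \lambda, \cL_a - \lambda) = O(\epsilon^2) \to 0$ as $\epsilon \to 0^+$. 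Consequently there exists $\epsilon_0 > 0$ such that
\[
\mathrm{\hat{d}}(\cL_a^\epsilon - \lambda, \cL_a - \lambda) < \gamma (1+\gamma^2)^{-1/2}, \qquad 0 < \epsilon < \epsilon_0 .
\]

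With $\mathcal{S} := \cL_a^\epsilon - \lambda$ for such $\epsilon$, I would apply the semi-Fredholm branch of Theorem \ref{thmindexKato} to conclude at once that $\cL_a^\epsilon - \lambda$ is semi-Fredholm, with $\nul(\cL_a^\epsilon - \lambda) \leq \nul(\cL_a - \lambda)$ and $\mathrm{def}(\cL_a^\epsilon - \lambda) \leq \mathrm{def}(\cL_a - \lambda)$. For the equality of indices I would then invoke the Hilbert-space refinement of the same theorem: because $L^2$ is a Hilbert space, the admissible threshold $\delta$ in Kato's theorem can be taken equal to $\gamma(1+\gamma^2)^{-1/2}$, which is exactly the bound already secured. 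Thus the very same range $0 < \epsilon < \epsilon_0$ forces $\ind(\cL_a^\epsilon - \lambda) = \ind(\cL_a - \lambda)$, completing the proof. I do not expect a genuine obstacle here; the only points meriting care are confirming $\gamma > 0$ (so that the $\epsilon$-window is nonempty) and observing that the Hilbert-space setting allows a \emph{single} smallness threshold to deliver both the semi-Fredholm conclusion and the index equality simultaneously, rather than two separate thresholds.
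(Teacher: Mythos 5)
Your proposal is correct and follows essentially the same route as the paper: semi-Fredholmness gives closed range, hence $\gamma(\cL_a - \lambda) > 0$; the generalized convergence of Lemma \ref{lemconv} makes $\mathrm{\hat{d}}(\cL_a^\epsilon - \lambda, \cL_a - \lambda)$ fall below the threshold $\gamma(1+\gamma^2)^{-1/2}$ for small $\epsilon$; and Kato's stability theorem (with the degenerate operator as the reference) yields both semi-Fredholmness and equality of indices. Your explicit remark that the Hilbert-space case allows the single threshold $\delta = \gamma(1+\gamma^2)^{-1/2}$ to deliver the index equality simultaneously is exactly what the paper's ``direct application'' of Theorem \ref{thmindexKato} implicitly uses.
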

\begin{proof}
If $\cL_a - \lambda$ is semi-Fredholm then $\cR(\cL_a - \lambda)$ is closed and, consequently, $\gamma : = \gamma(\cL_a-\lambda) > 0$ and is independent of $\epsilon > 0$. Since $\cL_a^\epsilon - \lambda \, \to \, \cL_a - \lambda$ in the generalized sense as $\epsilon \to 0^+$, then we have that for all $0 < \epsilon \ll 1$ sufficiently small, 
\[
\mathrm{\hat{d}}(\cL_a^\epsilon - \lambda, \cL_a - \lambda) <  \gamma (1 + \gamma^2)^{-1/2}.
\]
From a direct application of Kato's theorem \ref{thmindexKato} we conclude that $\cL^\epsilon_a - \lambda$ is semi-Fredholm and $\ind(\cL^\epsilon_a - \lambda) = \ind(\cL_a - \lambda)$, for all $\epsilon > 0$ small enough.
\end{proof}

\section{Energy estimates on spectral equations}
\label{secenergyest}

In this section we establish the basic energy estimate for solutions to the spectral equation. The monotonicity of the fronts plays a crucial role in the analysis.

Let $\varphi= \varphi(x)$ be any of the monotone fronts of Proposition \ref{propstructure} traveling with speed $c
 \geq 0$, and let $\cL_a : \cD(\cL_a) = H^{2} \subset L^2 \to L^2$ be the conjugated linearized operator around the front \eqref{conjopL} for a certain fixed $a \in \R$. Take any fixed $\lambda \in \C$ and suppose that there exists a solution $u\in  \cD(\cL_a) = H^{2}$ to the spectral equation
\begin{equation}
\label{spectralp2}
\begin{aligned}
0 = (\cL_a - \lambda)u &= D(\varphi)u_{xx} + \Big( 2 D(\varphi)_x - 2a D(\varphi) + c\Big) u_x + \\ & \quad + \Big( a^2 D(\varphi) - 2a D(\varphi)_x - ac + D(\varphi)_{xx} + f'(\varphi) - \lambda  \Big)u.
\end{aligned}
\end{equation}
Consider the change of variables
\begin{equation}
\label{changev}
u(x)= w(x) e^{\theta(x)},
\end{equation}
where $\theta =\theta(x)$ is a function to be chosen later. Upon substitution into the spectral equation \eqref{spectralp2} we obtain the following equation for $w$,
\begin{equation}\label{eqwp}
D(\varphi)w_{xx} + \Big( 2 D(\varphi)_x + 2 \theta_x D(\varphi) - 2aD(\varphi) + c\Big) w_x+ \widetilde{G}(x,a)w - \lambda w = 0,
\end{equation}
where the coefficient
\[
\begin{aligned}
\widetilde{G}(x,a) &:= D(\varphi)\big( \theta_{xx} + \theta_x^2 \big) + \big( 2 D(\varphi)_x -a D(\varphi) + c \big) \theta_x \\ &\quad  + a^2 D(\varphi) - 2 aD(\varphi)_x - ac + D(\varphi)_{xx} + f'(\varphi),
\end{aligned}
\]
is a function of $x \in \R$, parametrized by $a \in \R$ and $c \geq 0$. Let us choose $\theta = \theta(x)$ such that
\[
2 \theta_x D(\varphi) - 2aD(\varphi) + c = 0,
\]
for all $x \in \R$. In view that $D(\varphi) > 0$ for all $x \in \R$ and that it may vanish only in the limit when $x \to \pm \infty$, we obtain
\begin{equation}\label{theta}
\theta(x) = -\frac{c}{2} \int_{x_0}^{x}\frac{dy}{D(\varphi(y))} + a(x-x_0), \qquad x \in \R,
\end{equation}
for any fixed (but arbitrary) $x_0 \in \R$. With this choice of $\theta = \theta(x)$, and upon substitution into \eqref{eqwp}, one gets
\begin{equation}
\label{eqw}
D(\varphi)w_{xx}+2 D(\varphi)_x w_x+ G(x)w - \lambda w = 0,
\end{equation}
where 
\[
G(x) := \widetilde{G}(x,a) = -\frac{c}{2} \frac{D(\varphi)_x}{D(\varphi)}-\frac{c^2}{4 D(\varphi)}+ D(\varphi)_{xx}+f'(\varphi).
\]
In other words, with this particular form of $\theta$, the function $\widetilde{G}(x,a)$ becomes independent of the parameter $a \in \R$. Notice that $\theta = \theta(x)$ is well defined for all $x \in \R$; depending on the case under consideration, though, it may diverge (and consequently, so may $e^{-\theta(x)}$) as $x \to \pm \infty$.

In the analysis that follows, we are going to suppose that $u \in H^2$ decays sufficiently fast as $x \to \pm \infty$ so that $w$ defined in \eqref{changev} belongs to $H^2$ as well. More precisely, we make the following
\begin{assumption}
\label{assumcrucial}
Given $a \in \R$, for any $\lambda \in \Omega(a)$ and any solution $u \in H^2$ of the spectral equation $(\cL_a - \lambda) u = 0$, the function
\begin{equation}
\label{defofchangew}
w(x) = \exp \left(\frac{c}{2} \int_{x_0}^{x}\frac{dy}{D(\varphi(y))} - a(x-x_0) \right) u(x), \quad x \in \R,
\end{equation}
belongs to $H^2$ as well. The set $\Omega(a)$ is defined in \eqref{defOmegaa}.
\end{assumption}

\begin{lemma}[basic energy estimate]\label{lembee}
For $a \in \R$ fixed let $\cL_a : \cD = H^2 \subset L^2 \to L^2$ be the conjugated linearized operator around any monotone traveling Nagumo front of Proposition \ref{propstructure}. Let $\lambda \in \Omega(a)$ be fixed and let $u \in H^2$ be any solution to the spectral equation $(\cL_a - \lambda) u = 0$. Let us suppose that $a \in \R$ is such that $0 \in \Omega(a)$ and $e^{ax} \varphi_x \in H^2$. Then, under Assumption \ref{assumcrucial} there holds the energy estimate
\begin{equation}
 \label{basicee}
\lambda \< D(\varphi)w, w\>_{L^2} = -  \| D(\varphi) \psi (w/\psi)_x \|_{L^2}^2,
\end{equation}
where $w := e^{-\theta(x)}u \in H^2$, $\psi := e^{-\theta(x)} e^{ax}\varphi_x \in H^2$ and $\theta = \theta(x)$ is defined in \eqref{theta}.
\end{lemma}
\begin{proof}
By hypothesis, $a \in \R$ is such that $\lambda = 0 \in \Omega(a)$ and $e^{ax} \varphi_x \in H^2$. Hence, since $\varphi_x \in H^2$ is an $L^2$-eigenfunction of the linearized operator $\cL$ around the wave associated to the zero eigenvalue, as $\cL \varphi_x = 0$ (see Remark \ref{remef0}), we obtain that $e^{ax} \varphi_x \in H^2$ is an $L^2$-eigenfunction of the conjugated operator associated to the zero eigenvalue as well, because
\[
\cL_a e^{ax} \varphi_x = e^{ax} \cL e^{-ax} e^{ax} \varphi_x = e^{ax} \cL \varphi_x = 0.
\]

Thanks to Assumption \ref{assumcrucial}, the solution $u = e^{ax} \varphi_x$ to the spectral equation $\cL_a u = 0$ is such that $\psi = e^{-\theta(x)} e^{ax} \varphi_x \in H^2$, in view that $\lambda = 0 \in \Omega(a)$. Hence, we can repeat the procedure leading to equation \eqref{eqw} on $\psi$ to arrive at the equation
\begin{equation}
\label{eqpsi}
D(\varphi)\psi_{xx}+ 2D(\varphi)_x \psi_x+G(x)\psi =0.
\end{equation}

Now take any $\lambda \in \Omega(a)$ and any solution $u \in H^2$ to the spectral equation $(\cL_a - \lambda) u = 0$. Defining $w = e^{-\theta(x)}u \in H^2$ (by Assumption \ref{assumcrucial}) we arrive at the spectral equation \eqref{eqw}. Multiply equations \eqref{eqw} and \eqref{eqpsi} by $D(\varphi)$ and rearrange the terms appropriately. The result is
\begin{equation}
 \label{eqab}
 \begin{aligned}
  (D(\varphi)^{2}w_x)_x +D(\varphi)G(x)w - \lambda D(\varphi)w &= 0,\\
  (D(\varphi)^{2}\psi_x)_x +D(\varphi)G(x)\psi &= 0.
 \end{aligned}
\end{equation}
Since the fronts is monotone ($\varphi_x \gtrless 0$) we have that $\psi \neq 0$ for all $x \in \R$ and we can substitute
\[ 
D(\varphi)G(x) = -\frac{(D(\varphi)^{2}\psi_x)_x}{\psi}
\]
into the first equation in \eqref{eqab} to obtain
\begin{equation}
\label{sltype}
(D(\varphi)^{2}w_x)_x -\frac{(D(\varphi)^{2}\psi_x)_x}{\psi}w - \lambda D(\varphi)w = 0.
\end{equation}

Take the complex $L^2$-product of $w$ with last equation and integrate by parts. This yields,
\[
\begin{aligned}
\lambda \int\limits_{\R} D(\varphi)|w|^{2} dx  &= \int\limits_{\R} 
\overline{w} 
(D(\varphi)^{2}w_x)_x dx - \int\limits_{\R} (D(\varphi)^{2}\psi_x)_x \frac{|w|^{2}}{\psi} dx\\
&= -\int\limits_{\R} D(\varphi)^{2}|w_x|^{2}dx + \int\limits_{\R} D(\varphi)^{2} \psi_x 
\left(\frac{|w|^{2}}{\psi} \right)_x dx \\
&= \int\limits_{\R} D(\varphi)^{2} \left( \psi_x \left(\frac{|w|^{2}}{\psi} \right)_x-|w_x|^{2} \right) dx.
\end{aligned}
\]
Use the identity 
\[
\psi^{2}\left|\left(\frac{w}{\psi} \right)_x \right|^{2} = - \left( \psi_x \left(\frac{|w|^{2}}{\psi} \right)_x-|w_x|^{2} \right),
\]
to obtain
\[
\lambda \int\limits_{\R} D(\varphi)|w|^{2} dx  = -  \int\limits_{\R}  D(\varphi)^{2}\psi^{2}\left|\left(\frac{w}{\psi} \right)_x \right|^{2} dx,
\]
as claimed. Finally, we need to verify that $D(\varphi) \psi (w/\psi)_x \in L^2$. But this is a straightforward consequence of Assumption \ref{assumcrucial}. Indeed, use the expression for $\theta$ and substitute $\psi = e^{-\theta(x)} e^{ax} \varphi_x$ to obtain 
\[
D(\varphi) \psi \left(\frac{w}{\psi} \right)_x = \frac{c}{2} w + D(\varphi) \frac{\varphi_{xx}}{\varphi_x} w,
\]
yielding
\[
\| D(\varphi) \psi (w/\psi)_x \|_{L^2}\leq C \| w \|_{L^2} < \infty,
\]
for some uniform $C > 0$ because the coefficient $D(\varphi)\varphi_{xx}/\varphi_x$ is uniformly bounded for all $x \in \R$ (see Lemma \ref{lemauxi}) and $w \in H^2$ by Assumption \ref{assumcrucial}. Thus the integral on the right hand side of \eqref{basicee} exists and the conclusion follows.
\end{proof}

\begin{remark}
A few observations are in order. First, notice that we need to find $a \in \R$ such that Assumption \ref{assumcrucial} holds in order to obtain the energy estimate. Hence, the search for such $a \in \R$ will be done on a case by case basis. Second, observe that the monotonicity of the front is important to conclude. Not only is it crucial to eliminate one of the spectral equations and to perform the energy estimate, but it is a fundamental property of the trajectory: it is well-known, in the particular case of constant diffusion coefficients, for instance, that the traveling pulse solutions to bistable reaction-diffusion equations are spectrally unstable precisely because of the lack of monotonicity of the trajectory, which renders positive eigenvalues of second order differential operators of Sturm type (see Section 2.3.3 and Theorem 2.3.3 in \cite{KaPro13}).
\end{remark}
\begin{remark}
The transformation \eqref{changev} of the eigenfunction and the corresponding energy estimate that leads to the conclusion of Lemma \ref{lembee} has been also adapted to the spectral stability of planar fronts with degenerate diffusions in a multidimensional setting \cite{BMP17} and to traveling fronts for hyperbolic models of diffusion \cite{LMPS-book}.
\end{remark}

\section{Spectral stability of monotone degenerate fronts}
\label{secNd}

In this section, we establish the spectral stability property of the family of monotone, non-stationary, diffusion-degenerate Nagumo fronts, for which
\[
\begin{aligned}
u_+ &= \alpha, &  & u_- = 0,\\
f'(u_+) = f'(\alpha) &> 0, & & f'(u_-) = f'(0) < 0,
\end{aligned}
\]
which correspond to fronts of Type I in Proposition \ref{propstructure}. Each front $\varphi = \varphi(x)$ is monotone increasing, $\varphi_x > 0$, traveling with positive constant speed
\[
c > \bbc(\alpha) = 2 \sqrt{D(\alpha) f'(\alpha)} > 0.
\]

The fronts are diffusion degenerate in view that $D(u_-) = D(0) = 0$ and the underlying spectral problem is degenerate. Consequently, we use the partition of spectrum of Definition \ref{defspecd}. In addition, note that $f'(u_+) = f'(\alpha) > 0$, $f'(u_-) = f'(0) < 0$ and, thus, the unweighted continuous $L^2$-spectrum of the linearized operator $\cL$ around the front is unstable. Therefore, it is necessary to locate the subset of the compression spectrum, $\sigma_\delta$, under both conjugation and parabolic regularization. Section \ref{secexpwNd} is devoted to find  the sufficient conditions on the exponential weight so that $\sigma_\delta(\cL)_{|L^2_a}$ gets stabilized. These conditions on $a \in \R$ are crucial in order to verify Assumption \ref{assumcrucial}. For that purpose, Section \ref{secdecaystr} establishes the detailed decay structure of solutions to the spectral equation on both the degenerate and the non-degenerate sides. On the non-degenerate side, as $x \to \infty$, the asymptotic coefficient is hyperbolic and we are able to apply \emph{the Gap Lemma} (cf. \cite{GZ98,KS98}) to obtain the exact decay rate of the eigenfunctions. Due to the degeneracy in the asymptotic limit as $x \to -\infty$ and after a detailed description of the eigenfunctions' decay structure, we find the necessity to impose a further condition on the exponential weight in order to verify Assumption \ref{assumcrucial}. In this fashion, under the appropriate exponential weight compatible with the hypotheses of Lemma \ref{lembee}, we prove that the point spectrum of the conjugated operator is stable (Section \ref{secptspNd}). Finally, thanks also to the choice of the proper energy space $L^2_a$, in Section \ref{seclocapp} we prove the stability of the subset of spectrum $\sigma_\pi$ by showing that singular sequences disperse their $L^2$-mass asymptotically to $\pm \infty$ where the sign of the coefficients of the operator is known. An energy estimate closes the argument.

\subsection{Choice of the exponential weight}
\label{secexpwNd}

First, we choose the weight $a \in \R$ that defines the conjugated operator $\cL_a$ and stabilizes $\sigma_\delta(\cL)_{|L^2_a} = \sigma_\delta(\cL_a)_{|L^2}$. In this case the region of consistent splitting (see \eqref{defOmegaa} above) is given by
\[
\Omega(a) = \big\{ \lambda \in \C \, : \, \Re \lambda > \max \{ D(\alpha)a^2 - ac + f'(\alpha), \, - ac + f'(0) \} \, \big\},
\]
for each $a \in \R$. 

\begin{lemma}
\label{lemdspecloc}
For any $a > 0$, there holds
\[
\sigma_\delta(\cL_a)_{|L^2} \subset \C \backslash \Omega(a).
\]
\end{lemma}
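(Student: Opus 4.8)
The plan is to follow the strategy of Lemma \ref{lemsdsN} from the stationary case, now applied to the conjugated operator $\cL_a$ together with its parabolic regularizations $\cL_a^\epsilon$, with the extra care required because here the region of consistent splitting $\Omega(a,\epsilon)$ genuinely depends on $\epsilon$ (the degenerate end state $u_-=0$ has $D^\epsilon(u_-)=\epsilon$ rather than $0$).

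First I would show that every $\lambda \in \spd(\cL_a)_{|L^2}$ makes $\cL_a - \lambda$ semi-Fredholm with nonzero index. Indeed, by Definition \ref{defspecd}, if $\lambda \in \spd(\cL_a)_{|L^2}$ then $\cL_a - \lambda$ is injective with closed range and $\cR(\cL_a - \lambda) \subsetneq L^2$; hence $\nul(\cL_a - \lambda) = 0$, so it is semi-Fredholm, while $\codim \cR(\cL_a - \lambda) > 0$ forces $\ind(\cL_a - \lambda) \neq 0$. Applying the Kato stability result of Lemma \ref{lemLsemi} then yields $\epsilon_1 > 0$ such that for all $0 < \epsilon < \epsilon_1$ the regularized operator $\cL_a^\epsilon - \lambda$ is again semi-Fredholm with $\ind(\cL_a^\epsilon - \lambda) = \ind(\cL_a - \lambda) \neq 0$.

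The second ingredient, which is the new point relative to the stationary case, is to reconcile the $\epsilon$-dependent region $\Omega(a,\epsilon)$ of \eqref{Omegae} with the limiting region $\Omega(a) = \Omega(a,0)$ of \eqref{defOmegaa}. Arguing by contradiction, I would suppose $\lambda \in \spd(\cL_a)_{|L^2} \cap \Omega(a)$. Since $D^\epsilon(u_\pm) = D(u_\pm) + \epsilon$, the threshold defining $\Omega(a,\epsilon)$, namely $\max\{(D(\alpha)+\epsilon)a^2 - ac + f'(\alpha),\, \epsilon a^2 - ac + f'(0)\}$, decreases to the threshold $\max\{D(\alpha)a^2 - ac + f'(\alpha),\, -ac + f'(0)\}$ of $\Omega(a)$ as $\epsilon \to 0^+$, the difference at each end state being the vanishing amount $\epsilon a^2 \geq 0$. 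Because $\lambda$ is fixed with $\Re\lambda$ strictly exceeding the latter threshold, there is $\epsilon_2 > 0$ so that $\lambda \in \Omega(a,\epsilon)$ for every $0 < \epsilon < \epsilon_2$.

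Choosing any $0 < \epsilon < \min\{\epsilon_1,\epsilon_2\}$ then produces a contradiction: on one hand $\lambda \in \Omega(a,\epsilon)$, so Lemma \ref{lemLep} asserts that $\cL_a^\epsilon - \lambda$ is Fredholm with index zero; on the other hand the first step gives $\ind(\cL_a^\epsilon - \lambda) \neq 0$. Hence $\lambda \notin \Omega(a)$, and therefore $\spd(\cL_a)_{|L^2} \subset \C \backslash \Omega(a)$, as claimed. The only genuinely delicate point is this region-matching step: one must confirm that the regularization does not expel a fixed $\lambda$ from the region of consistent splitting, which holds precisely because the regularization raises the Fredholm thresholds only by $\epsilon a^2$ at each end state, an amount that is continuous in $\epsilon$ and vanishes in the limit.
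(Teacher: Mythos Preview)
Your proof is correct and follows essentially the same approach as the paper: both argue by contradiction, use the semi-Fredholm observation for $\lambda\in\spd(\cL_a)$, transfer the nonzero index to $\cL_a^\epsilon-\lambda$ for small $\epsilon$ via Kato's stability (the paper cites Lemma~\ref{lemconv} directly, you cite the packaged Lemma~\ref{lemLsemi}), and then reach a contradiction with Lemma~\ref{lemLep} by noting that the threshold defining $\Omega(a,\epsilon)$ exceeds that of $\Omega(a)$ by exactly $\epsilon a^2$, so a fixed $\lambda\in\Omega(a)$ lies in $\Omega(a,\epsilon)$ once $\epsilon$ is small enough. Your identification of the region-matching step as the only genuinely new point relative to the stationary case is exactly right.
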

\begin{proof}
Consider the regularized operator $\cL_a^\epsilon : \cD(\cL_a^\epsilon) = H^2 \subset L^2 \to L^2$ defined in \eqref{conjop} (Section \ref{secregop}) for some $\epsilon > 0$ and $a > 0$. Let us suppose, by contradiction, that there exists $\lambda_0 \in \sigma_\delta(a) \cap \Omega(a)$. First, observe that $\cL_a - \lambda_0$ is semi-Fredholm with $\ind(\cL_a - \lambda_0) \neq 0$. Indeed, if $\lambda_0 \in \sigma_\delta(\cL_a)$ then, by definition, $\cL_a - \lambda_0$ is injective, $\cR(\cL_a - \lambda_0)$ is closed and $\cR(\cL_a - \lambda_0) \subsetneqq L^2$ (see Definition \ref{defspecd}). This implies that $\nul (\cL_a - \lambda_0) = 0$ and $\cL_a - \lambda_0$ is semi-Fredholm. Moreover, since $\mathrm{def} (\cL_a - \lambda_0) = \mathrm{codim} \cR(\cL_a - \lambda_0) > 0$ we conclude that $\ind(\cL_a - \lambda_0) \neq 0$.

Now, thanks to Lemma \ref{lemconv}, we deduce the existence of $\epsilon_0 > 0$ sufficiently small such that $\cL_a^\epsilon - \lambda_0$ is semi-Fredholm with $\ind(\cL_a^\epsilon - \lambda_0) = \ind(\cL_a - \lambda_0) \neq 0$ for each $0 < \epsilon < \epsilon_0$. Now, since $\lambda_0 \in \Omega(a)$ we have
\[
\Re \lambda > q_0(\alpha,a) := \max \{ D(\alpha)a^2 - ac + f'(\alpha), \, - ac + f'(0) \}.
\]

In view that $D^\epsilon(u_\pm) -ac + f'(u_\pm) = \epsilon a^2 + D(u_\pm) -ac + f'(u_\pm)$ and $a > 0$, we can always choose $\widetilde{\epsilon} > 0$ sufficiently small such that
\[
0 < \widetilde{\epsilon} < \tfrac{1}{2} \min \left\{ \epsilon_0, \frac{\Re \lambda_0 - q_0(\alpha,a)}{a^2}\right\}.
\]
This implies that $\lambda_0 \in \Omega(a,\widetilde{\epsilon})$ by definition (see \eqref{Omegae}). Apply Lemma \ref{lemLep} to conclude that $\cL_a^{\widetilde{\epsilon}} - \lambda_0$ is Fredholm with 
$\ind(\cL_a^{\widetilde{\epsilon}} - \lambda_0) = 0$, yielding a contradiction. This proves the lemma.
\end{proof}

Henceforth, in order to achieve stability of $\sigma_\delta(\cL_a)_{|L^2}$ we need to find values of $a > 0$ such that $q_0(\alpha,a) < 0$. In view that $f'(0) < 0$, this happens if and only if $a_1(\alpha) < a < a_2(\alpha)$, where $a_1(\alpha)$ and $a_2(\alpha)$ are the roots of 
\begin{equation}
\label{defpaalpha}
p(a;\alpha) := D(\alpha) a^2 - ac + f'(\alpha) = 0,
\end{equation}
namely,
\[
\begin{aligned}
a_1(\alpha) &=  \frac{1}{2D(\alpha)} \Big( c - \sqrt{c^2 - 4D(\alpha) f'(\alpha)} \, \Big),\\
a_2(\alpha) &=  \frac{1}{2D(\alpha)} \Big( c + \sqrt{c^2 - 4D(\alpha) f'(\alpha)} \, \Big).
\end{aligned}
\]
Observe that $0 < a_1(\alpha) < a_2(\alpha)$, inasmuch as $c > \bbc(\alpha)$. We conclude that it suffices to take $0 < a_1(\alpha) < a < a_2(\alpha)$ to stabilize $\sigma_\delta(\cL_a)_{|L^2}$. We have thus proved the following
\begin{lemma}
\label{lemsdNd}
For each value of $a \in \R$ such that
\begin{equation}
\label{condaNdprel}
0 < a_1(\alpha) < a < a_2(\alpha),
\end{equation}
the set $\sigma_\delta(\cL_a)_{|L^2}$ is stable, more precisely,
\[
\sigma_\delta(\cL_a)_{|L^2} \subset \C \backslash \Omega(a) = \{ \lambda \in \C \, : \, \Re \lambda \leq - \mu_0 < 0\},
\]
where $\mu_0 = - \max \{ D(\alpha)a^2 - ac + f'(\alpha), \, - ac + f'(0) \} > 0$.
\end{lemma}

\subsection{Decay structure of solutions to spectral equations}
\label{secdecaystr}

In this section we show that there exists $a \in \R$ satisfying condition \eqref{condaNdprel} for which, in addition, Assumption \ref{assumcrucial} holds. This implies, in turn, that the change of variables \eqref{defofchangew} satisfies the hypotheses of Lemma \ref{lembee} in the present degenerate case. For that purpose, we examine the decay properties of solutions to the spectral equation.

Let us suppose that for a given fixed $a \in \R$ and $\lambda \in \Omega(a)$ there exists a solution $u \in H^2$ to the spectral equation  $(\cL_a - \lambda) u = 0$. Let us write the change of variables \eqref{defofchangew} as
\[
w(x) = e^{-\theta(x)}u(x) = \Theta(x) u(x), \qquad x \in \R,
\]
where
\begin{equation}
\label{defTheta}
\Theta(x) := \exp \left(\frac{c}{2} \int_{x_0}^{x}\frac{ds}{D(\varphi(s))} - a(x-x_0) \right) u(x), \quad x \in \R,
\end{equation}
and $\theta = \theta(x)$ is defined in \eqref{theta}. Here $x_0 \in \R$ is fixed but arbitrary. Since $\Theta$ is smooth, it is clear that if $u \in H^2$ then $w \in H^2(-x_0,x_0)$ for any finite $x_0 \in \R$. Hence, in order to show that $w \in H^2$, it suffices to prove that $w$ decays sufficiently fast as $x \to \pm \infty$. We shall show that if we impose further conditions on $a$ then we can guarantee this behavior.

Let us first examine the decay on the non-degenerate side as $x \to +\infty$.

\subsubsection{Behavior at $+\infty$}

The asymptotic decay properties of $u \in H^2$ as a solution to $(\cL_a - \lambda) u = 0$ are determined by those of $\bw \in H^1 \times H^1$, solution to
\begin{equation}
\label{trueqs}
\bw_x = \A_a(x,\lambda) \bw, \qquad \bw = \begin{pmatrix} u \\ u_x \end{pmatrix},
\end{equation}
where the variable coefficient matrices are given by
\[
\A_a(x,\lambda) = \begin{pmatrix} 0 & 1 \\ D(\varphi)^{-1} (\lambda - b_{0,a}(x)) & -D(\varphi)^{-1} b_{1,a}(x)
\end{pmatrix},
\]
with
\[
\begin{aligned}
b_{1,a}(x) &= 2D(\varphi)_x + c - 2a D(\varphi),\\
b_{0,a}(x) &= a^2D(\varphi) - 2aD(\varphi)_x -ac + D(\varphi)_{xx} + f'(\varphi)
\end{aligned}
\]
(see expressions \eqref{firstorder} - \eqref{gencoeff} with $\epsilon = 0$). On the non-degenerate side the asymptotic constant coefficients are well-defined and hyperbolic,
\begin{equation}
\label{asymANd}
\A_a^+(\lambda) = \lim_{x \to +\infty} \A_a(x,\lambda) = \begin{pmatrix} 0 & 1 \\ & \\ \displaystyle{\frac{\lambda - p(a;\alpha)}{D(\alpha)}} & \displaystyle{2a - \frac{c}{D(\alpha)}}
\end{pmatrix}, 
\end{equation}
and the associated asymptotic system is
\begin{equation}
\label{asymsystNd}
\bv_x = \A_a^+(\lambda) \bv.
\end{equation}

To determine the decaying properties of $\bw$ we invoke the Gap Lemma of Gardner and Zumbrun \cite{GZ98} and Kapitula and Sandstede \cite{KS98} (see also \cite{MZ03,MZ02,PZ04,Z6,ZH} for its different versions, further information and direct applications), which relates the decay structure of solutions of the variable coefficient system \eqref{trueqs} to those of the asymptotic constant coefficient system \eqref{asymsystNd}, provided that $\A_a(x,\lambda)$ approaches $\A_a^+(\lambda)$ exponentially fast as $x \to +\infty$. For the precise statement of the Gap Lemma we refer the reader to Lemma A.11 in \cite{Z6} or Appendix C in \cite{MZ02}.

First we need to prove the following auxiliary result.
\begin{lemma}
Given $a > 0$ suppose that $\lambda \in \Omega(a)$. Then the decaying eigenvalue of $\A_a^+(\lambda)$ is given by
\begin{equation}
\label{decmode}
\mu_1^+(\lambda) = a - \frac{c}{2D(\alpha)} - \frac{1}{2} \zeta^+(\lambda)^{1/2},
\end{equation}
where the discriminant
\begin{equation}
\label{discri}
\zeta^+(\lambda) := \Big( \frac{c}{D(\alpha)} - 2a\Big)^2 +  \frac{4(\lambda - p(a;\alpha))}{D(\alpha)},
\end{equation}
is an analytic function of $\lambda \in \Omega(a)$ and $p(a;\alpha)$ is defined in \eqref{defpaalpha}.
\end{lemma}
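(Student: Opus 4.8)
The plan is to diagonalise the constant companion matrix $\A_a^+(\lambda)$ explicitly, to verify analyticity of the resulting square root on $\Omega(a)$, and then to single out the eigenvalue with negative real part.

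First I would compute the spectrum of $\A_a^+(\lambda)$. Writing it in companion form with lower row entries $P = (\lambda - p(a;\alpha))/D(\alpha)$ and $Q = 2a - c/D(\alpha)$, the characteristic equation $\det(\A_a^+(\lambda) - z\Id) = z^2 - Qz - P = 0$ has roots $z_\pm = \tfrac12 Q \pm \tfrac12\sqrt{Q^2 + 4P}$. A direct substitution gives $\tfrac12 Q = a - c/(2D(\alpha))$ and $Q^2 + 4P = (c/D(\alpha) - 2a)^2 + 4(\lambda - p(a;\alpha))/D(\alpha) = \zeta^+(\lambda)$, so that $z_\pm = a - c/(2D(\alpha)) \pm \tfrac12\zeta^+(\lambda)^{1/2}$, and $\mu_1^+ := z_-$ is the candidate for the decaying mode.

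The key step is the analyticity of $\zeta^+(\lambda)^{1/2}$. Since $a, c, D(\alpha), f'(\alpha)$ are real, $p(a;\alpha)$ is real, and the defining inequality of $\Omega(a)$ forces $\Re\lambda > D(\alpha)a^2 - ac + f'(\alpha) = p(a;\alpha)$. Consequently $\Re\zeta^+(\lambda) = (c/D(\alpha) - 2a)^2 + 4(\Re\lambda - p(a;\alpha))/D(\alpha) > 0$, so $\zeta^+$ maps $\Omega(a)$ into the open right half-plane, which is disjoint from the branch cut $(-\infty,0]$. Because $\lambda \mapsto \zeta^+(\lambda)$ is affine, hence entire, its composition with the principal branch of the square root is analytic on $\Omega(a)$; this is the asserted analyticity. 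In particular $\zeta^+(\lambda) \neq 0$, so the two eigenvalues stay distinct and the splitting is genuine.

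Finally I would identify $z_-$ as the decaying eigenvalue. Setting $\beta := a - c/(2D(\alpha))$ (real) and noting $(c/D(\alpha) - 2a)^2 = 4\beta^2$, we have $\zeta^+(\lambda) = 4w$ with $w := \beta^2 + (\lambda - p(a;\alpha))/D(\alpha)$ and $\Re w > \beta^2$. For the principal root one has the identity $(\Re\sqrt{w})^2 = \tfrac12(|w| + \Re w) \geq \Re w > \beta^2$, whence $\Re\sqrt{w} > |\beta| \geq 0$. Since $\tfrac12\zeta^+(\lambda)^{1/2} = \sqrt w$, we get $z_\pm = \beta \pm \sqrt w$, so $\Re z_- = \beta - \Re\sqrt w < 0 < \beta + \Re\sqrt w = \Re z_+$; hence $z_- = \mu_1^+(\lambda)$ is the unique eigenvalue with negative real part, i.e. the mode decaying as $x \to +\infty$, exactly as claimed. (Alternatively, one may invoke the consistent-splitting argument of Lemma \ref{lemconsplit}, valid here because $D(\alpha) > 0$ keeps the end state at $+\infty$ hyperbolic, together with $\Re z_- < \Re z_+$.) The main obstacle is precisely the control of the branch of the square root: it is the inequality $\Re\lambda > p(a;\alpha)$ built into $\Omega(a)$ that keeps $\zeta^+$ off the negative real axis and thereby legitimises both the analyticity and the sign convention $z_- = \mu_1^+$.
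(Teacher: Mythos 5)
Your proposal is correct, and it reaches the conclusion by a more self-contained route than the paper. The eigenvalue computation is identical (companion matrix, roots $z_\pm = a - c/(2D(\alpha)) \pm \tfrac12\zeta^+(\lambda)^{1/2}$), but the two arguments diverge at the key analytic steps. The paper justifies analyticity of $\zeta^+(\lambda)^{1/2}$ only by remarking that $\zeta^+$ is linear in $\lambda$, and then identifies the decaying mode by a soft continuation argument: by Lemma \ref{lemconsplit} the splitting holds for real $\lambda \gg 1$, and since $\Omega(a)$ is connected, the eigenvalues are analytic, and no Fredholm border (hence no center eigenspace) is crossed inside $\Omega(a)$, the sign of $\Re\mu_1^+$ persists throughout. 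You instead verify directly that $\Re\zeta^+(\lambda) = (c/D(\alpha)-2a)^2 + 4(\Re\lambda - p(a;\alpha))/D(\alpha) > 0$ on $\Omega(a)$, which both keeps $\zeta^+$ off the branch cut $(-\infty,0]$ (thereby legitimising the principal square root --- a point the paper leaves implicit) and feeds your pointwise inequality $(\Re\sqrt{w})^2 = \tfrac12(|w|+\Re w) \geq \Re w > \beta^2$, giving $\Re z_- < 0 < \Re z_+$ for \emph{every} $\lambda \in \Omega(a)$ at once. Your computation thus re-proves consistent splitting at $+\infty$ as a by-product rather than quoting it, which is cleaner here also because Lemma \ref{lemconsplit} is formally stated for the regularized matrices with $\epsilon > 0$, so the paper's appeal to it at $\epsilon = 0$ requires the (easy, but unstated) remark that the non-degenerate endpoint $D(\alpha) > 0$ makes the same computation valid. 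What the paper's softer continuation argument buys in exchange is generality: it would survive in settings (higher-order systems, non-explicit dispersion relations) where no closed-form square-root formula for the eigenvalues is available.
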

\begin{proof}
By a straightforward computation one easily finds that the eigenvalues of \eqref{asymANd} are
\[
\mu_{j}^+(\lambda) = a - \frac{c}{2D(\alpha)} + z_{j}^+(\lambda), \qquad j=1,2,
\]
where
\[
z_{1}^+(\lambda) = - \frac{1}{2}\zeta^+(\lambda)^{1/2}, \quad  z_{2}^+(\lambda) = \frac{1}{2}\zeta^+(\lambda)^{1/2}.
\]
Clearly, the discriminant $\zeta^+(\lambda)$ is linear in $\lambda$ and, hence, analytic in $\lambda \in \Omega(a)$. Therefore, here $\zeta^+(\lambda)^{1/2}$ denotes the principal branch of the square root and it is analytic in $\lambda \in \Omega(a)$ as well.

Now, we know from Lemma \ref{lemconsplit} that for $\lambda \in \R$, $\lambda \gg 1$ sufficiently large, the only eigenvalue with negative real part is $\mu_1^+(\lambda)$. Since the set of sufficiently large real numbers is contained in the connected set $\Omega(a)$ (this is evident by definition of $\Omega(a)$), and since the eigenvalues are continuous (actually, analytic) in $\lambda \in \Omega(a)$, we conclude that $\Re \mu_1^+(\lambda) < 0$ for all $\lambda \in \Omega(a)$ and that $\mu_1^+(\lambda)$ is the only decaying mode of $\A_a^+(\lambda)$. Otherwise the hyperbolicity (and consequently, the consistent splitting at $+\infty$) would be violated. This shows the result.
\end{proof}

It is to be observed that, thanks to the exponential decay of the front at the non-degenerate side (see Lemma \ref{lemdecayNd}), one readily obtains
\[
|\A_a(x,\lambda) - \A_a^+(\lambda)| \leq Ce^{-\eta x},
\]
as $x \to +\infty$ for some $C, \eta > 0$ and uniformly in $\lambda \in \Omega(a)$ (due to hyperbolicity). Henceforth, we are able to apply the Gap Lemma and to conclude that the decaying solution to the variable coefficient equation \eqref{trueqs} behaves as
\[
\bw(x,\lambda) = e^{\mu_1^+(\lambda)x} \big( \bv_1^+(\lambda) + O(e^{-\eta |x|} |\bv_1^+(\lambda)|) \big), \qquad x > 0,
\] 
where $ \bv_1^+(\lambda)$ denotes the eigenvector of $\A_a^+(\lambda)$ associated to the decaying eigenmode $\mu_1^+(\lambda)$ with $\Re \mu_1^+(\lambda) < 0$. This implies, in turn, that $u$ and $u_x$ decay at most as
\begin{equation}
\label{Nddecayuux}
|u|, |u_x| \leq C_1 e^{\Re \mu_1^+(\lambda) x} \to 0,
\end{equation}
for some $C_1 > 0$ as $x \to +\infty$, for $\lambda \in \Omega(a)$. Use the spectral equation,
\[
u_{xx} = \frac{1}{D(\varphi)} \left( b_{1,a}(x) u_x + b_{0,a}(x) u\right) - \lambda u,
\]
and the boundedness of the coefficients on the non-degenerate side to observe that
\begin{equation}
\label{Nddecayuxx}
|u_{xx}| \leq C (1 + |\lambda|) e^{\Re \mu_1^+(\lambda) x} = C_2(\lambda) e^{\Re \mu_1^+(\lambda) x} \to 0,
\end{equation}
as $x \to +\infty$, for some $C_2 = C_2(\lambda) > 0$. Thus, $u_{xx}$ also decays exponentially fast. We use this information to prove the following 
\begin{lemma}
\label{lemwgoodNdplus}
Given $a > 0$ and any fixed $\lambda \in \Omega(a)$, if $u \in H^2$ is a solution to the spectral equation $(\cL_a - \lambda) u = 0$ then the function $w(x) = \Theta(x) u(x)$ belongs to $H^2(x_0, \infty)$ for $x_0 \gg 1$ sufficiently large, where $\Theta = \Theta(x)$ is defined in \eqref{defTheta}.
\end{lemma}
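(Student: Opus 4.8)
The plan is to exploit the exact cancellation that the weight $\Theta = e^{-\theta}$, $\theta$ as in \eqref{theta}, was designed to produce: its exponential growth rate at $+\infty$ is precisely $\tfrac{c}{2D(\alpha)} - a$, which annihilates the ``$a - \tfrac{c}{2D(\alpha)}$'' part of the decaying mode $\mu_1^+(\lambda)$ from \eqref{decmode}, leaving a decay rate governed solely by $-\tfrac12\zeta^+(\lambda)^{1/2}$. Since $w$ is smooth and hence locally square integrable on any compact set, it suffices to show that $w$, $w_x$ and $w_{xx}$ decay exponentially as $x \to +\infty$.

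First I would record the asymptotics of $\Theta$ on the non-degenerate side. From \eqref{theta} one has $\theta_x = a - \tfrac{c}{2D(\varphi)}$, so $\Theta_x = \bigl(\tfrac{c}{2D(\varphi)} - a\bigr)\Theta$ and $\Theta_{xx} = \bigl[\bigl(\tfrac{c}{2D(\varphi)} - a\bigr)_x + \bigl(\tfrac{c}{2D(\varphi)} - a\bigr)^2\bigr]\Theta$. By Lemma \ref{lemdecayNd}, $\varphi \to \alpha$ exponentially fast, so $\tfrac{1}{D(\varphi(s))} - \tfrac{1}{D(\alpha)} = O(e^{-\eta s})$ as $s \to +\infty$; integrating, $\int_{x_0}^x \tfrac{ds}{D(\varphi(s))} = \tfrac{x}{D(\alpha)} + K + o(1)$ for some constant $K$, whence
\[
\Theta(x) = C\, e^{(\frac{c}{2D(\alpha)} - a)x}\bigl(1 + o(1)\bigr), \qquad x \to +\infty,
\]
for some $C > 0$. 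The bracketed coefficients in $\Theta_x$ and $\Theta_{xx}$ converge to finite limits as $x \to +\infty$ (using again $\varphi_x = O(e^{-\eta x})$ and $D(\varphi) \to D(\alpha) > 0$), so $\Theta_x$ and $\Theta_{xx}$ have the same exponential order as $\Theta$.

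Next I would combine this with the eigenfunction decay. By \eqref{Nddecayuux}--\eqref{Nddecayuxx}, $|u|, |u_x|, |u_{xx}| \leq C(\lambda)\, e^{\Re\mu_1^+(\lambda)\, x}$. Writing $w = \Theta u$ and expanding $w_x = \Theta_x u + \Theta u_x$ and $w_{xx} = \Theta_{xx} u + 2\Theta_x u_x + \Theta u_{xx}$ via the product rule, every term is a product of a factor of order $\Theta$ with one of order $e^{\Re\mu_1^+(\lambda)\, x}$, so
\[
|w|,\ |w_x|,\ |w_{xx}| \;\leq\; C\, e^{(\frac{c}{2D(\alpha)} - a + \Re\mu_1^+(\lambda))\, x} \;=\; C\, e^{-\frac12 \Re\zeta^+(\lambda)^{1/2}\, x},
\]
the exponent collapsing exactly because the weight cancels the $a - \tfrac{c}{2D(\alpha)}$ term of $\mu_1^+(\lambda)$ in \eqref{decmode}.

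The crux, and the step I expect to require the most care, is to verify that $\Re\zeta^+(\lambda)^{1/2} > 0$ for every $\lambda \in \Omega(a)$, so that the final exponent is strictly negative. Since $\zeta^+(\lambda)^{1/2}$ is the principal branch, its real part is strictly positive unless $\zeta^+(\lambda) \in (-\infty, 0]$, so it suffices to exclude this possibility. From \eqref{discri}, $\Im\zeta^+(\lambda) = 4\,\Im\lambda / D(\alpha)$, so $\zeta^+(\lambda)$ is real only when $\Im\lambda = 0$; and in that case $\Re\zeta^+(\lambda) = \bigl(\tfrac{c}{D(\alpha)} - 2a\bigr)^2 + \tfrac{4(\Re\lambda - p(a;\alpha))}{D(\alpha)} > 0$, because $\lambda \in \Omega(a)$ forces $\Re\lambda > p(a;\alpha)$ (recall \eqref{defpaalpha} and the definition of $\Omega(a)$). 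Hence $\zeta^+(\lambda) \notin (-\infty,0]$ and $\beta := \tfrac12 \Re\zeta^+(\lambda)^{1/2} > 0$, giving $|w|, |w_x|, |w_{xx}| \leq C e^{-\beta x}$ on $(x_0, \infty)$. These bounds are square integrable on the ray, and together with the continuity of $w$ and its derivatives on $[x_0, \infty)$ this yields $w \in H^2(x_0,\infty)$ for $x_0 \gg 1$, as claimed.
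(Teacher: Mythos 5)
Your proof is correct and follows essentially the same route as the paper's: combine the exponential decay of $u$, $u_x$, $u_{xx}$ at rate $\Re \mu_1^+(\lambda)$ with the growth rate $\tfrac{c}{2D(\alpha)} - a$ of $\Theta$ (extracted from the exponential convergence $\varphi \to \alpha$ and a Taylor expansion of $1/D(\varphi)$), so that the two rates cancel and leave decay at the rate $\tfrac12 \Re \zeta^+(\lambda)^{1/2}$. Your explicit verification that $\Re \zeta^+(\lambda)^{1/2} > 0$ for all $\lambda \in \Omega(a)$ (via $\Im \zeta^+ \neq 0$ off the real axis and $\Re \lambda > p(a;\alpha)$ on it) fills in a positivity claim that the paper asserts without detailed argument, but this is a refinement of the same proof rather than a different approach.
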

\begin{proof}
It suffices to show that $w$ and its derivatives decay sufficiently fast as $x \to \infty$. First denote
\[
\Re \mu_1^+(\lambda) = a - \frac{c}{2 D(\alpha)} - r^+(\lambda) < 0,
\]
where
\[
r^+(\lambda) := \frac{1}{2}\Re \zeta^+(\lambda)^{1/2} = \frac{1}{2\sqrt{2}} \sqrt{\Re \zeta^+(\lambda) + |\zeta^+(\lambda)| \, } > 0.
\]
Now, we estimate
\[
\begin{aligned}
\Theta(x) e^{\Re \mu_1^+(\lambda)x} &= e^{-\theta(x)} e^{\Re \mu_1^+(\lambda)x} \\
&= \exp \left(\frac{c}{2} \int_{x_0}^{x}\frac{ds}{D(\varphi(s))} - a(x-x_0) \right)  \exp \left( \Big( a - \frac{c}{2D(\alpha)} \Big) x_0 \right) \times \\& \quad \times \exp \left( \Big( a - \frac{c}{2D(\alpha)} \Big) (x-x_0) \right)  \exp \big( - r^+(\lambda) x_0 \big) \times \\ & \quad \times \exp \big( - r^+(\lambda) (x-x_0) \big) \\ 
&\leq  C_0(\lambda) \exp \left(\frac{c}{2} \int_{x_0}^{x}\frac{ds}{D(\varphi(s))} - \frac{c}{2D(\alpha)}(x-x_0) \right) \exp \big( - r^+(\lambda) (x-x_0) \big),
\end{aligned}
\]
for some $C_0 = C_0(\lambda) > 0$. 

Let us define
\[
\chi(x) := \frac{c}{2} \int_{x_0}^{x} \left( \frac{1}{D(\varphi(y))} - \frac{1}{D(\alpha)} \right) \, dy - r^+(\lambda) (x - x_0),
\]
so that
\[
\Theta(x) e^{\Re \mu_1^+(\lambda)x} = e^{-\theta(x)} e^{\Re \mu_1^+(\lambda)x} \leq C_0(\lambda) e^{\chi(x)},
\]
for fixed $\lambda \in \Omega(a)$. From Lemma \ref{lemdecayNd} we have that
\[
\alpha - \varphi(x) = |\alpha - \varphi(x)| \leq C e^{-\eta x},
\]
as $x \to +\infty$. Thus, after a Taylor expansion that reads
\[
\frac{1}{D(\varphi)} - \frac{1}{D(\alpha)} = \frac{D'(\alpha)}{D(\alpha)^2} (\alpha - \varphi) + O(|\alpha - \varphi|^2) \leq C_1 (\alpha - \varphi),
\]
for $\varphi \uparrow \alpha^-$, we arrive at
\[
\frac{c}{2} \int_{x_0}^{x} \left( \frac{1}{D(\varphi(y))} - \frac{1}{D(\alpha)} \right) \, dy \leq \frac{c C_1}{2 \eta} \big(e^{- \eta x_0} - e^{- \eta x}\big) = C_2 \big(e^{- \eta x_0} - e^{- \eta x} \big),
\]
for $x > x_0 \gg 1$, sufficiently large. Hence,
\[
\chi(x) \leq C_2 \big(e^{- \eta x_0} - e^{- \eta x} \big) - r^+(\lambda) (x - x_0) \leq C_2 e^{- \eta x_0} - r^+(\lambda) (x - x_0),
\]
yielding
\[
e^{\chi(x)} = \exp( C_2 e^{- \eta x_0} ) e^{- r^+(\lambda) (x - x_0)} \leq C_3 e^{- r^+(\lambda) (x - x_0)}.
\]
Thus, we have proved the decay estimate
\[
\Theta(x) e^{\Re \mu_1^+(\lambda)x} = e^{-\theta(x)} e^{\Re \mu_1^+(\lambda)x} \leq C(\lambda) e^{-r^+(\lambda)(x-x_0)} \to 0,
\]
as $x \to +\infty$ for some $C(\lambda) > 0$ with $\lambda \in \Omega(a)$ fixed. 

Now, since 
\[
\begin{aligned}
w&= e^{-\theta} u, \\
w_x &= e^{-\theta} \big( u_x - \theta_x u\big),\\
w_{xx} &= e^{-\theta} \big( u_{xx} - 2 \theta_x u_x + (\theta_x^2 - \theta_{xx})u),
\end{aligned}
\]
and noticing that 
\[
\theta_x = a - \frac{c}{2 D(\varphi)}, \quad \theta_{xx} = \frac{c D'(\varphi)\varphi_x}{2 D(\varphi)^2},
\]
are uniformly bounded on the non-degenerate side (as $D(\varphi) \to D(\alpha) > 0$), we apply estimates \eqref{Nddecayuux} and \eqref{Nddecayuxx} to arrive at
\[
|w|, |w_x|, |w_{xx}| \leq C(\lambda) e^{-r^+(\lambda)x} \to 0,
\]
as $x \to +\infty$. This shows that $w \in H^2(x_0, +\infty)$, as claimed.
\end{proof}

\subsubsection{Behavior at $-\infty$}

On the degenerate side, however, we lose hyperbolicity of the asymptotic matrix coefficient and we are not able to apply the Gap Lemma. To guarantee that $w \in H^2(-\infty, x_0)$ we need to impose a further condition on $a \in \R$. This is the content of the following
\begin{lemma}
\label{lemwgoodNdminus}
Let $a \in \R$ satisfy the condition
\begin{equation}
\label{condaNd}
0 < a_1(\alpha) < a < \frac{c}{2 D(\alpha)} < a_2(\alpha).
\end{equation}
Then for any solution $u \in H^2$ to the spectral equation $(\cL_a - \lambda) u = 0$ with $\lambda \in \Omega(a)$ the function $w(x) = \Theta(x) u(x)$ belongs to $H^2(-\infty, x_0)$ for any $x_0 < 0$, $|x_0| \gg 1$ sufficiently large.
\end{lemma}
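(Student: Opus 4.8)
The plan is to reduce the claim to the boundedness of the weight $\Theta$ defined in \eqref{defTheta} and of its first two derivatives on the half-line $(-\infty,x_0)$. Indeed, since $u\in H^2(\R)$ we have $u,u_x,u_{xx}\in L^2(-\infty,x_0)$, and from $w=\Theta u$ the product rule gives $w_x=\Theta_x u+\Theta u_x$ and $w_{xx}=\Theta_{xx}u+2\Theta_x u_x+\Theta u_{xx}$. Hence, if I can show that $\Theta,\Theta_x,\Theta_{xx}\in L^\infty(-\infty,x_0)$, then each summand is a bounded function times an $L^2(-\infty,x_0)$ function, so that $w,w_x,w_{xx}\in L^2(-\infty,x_0)$ and $w\in H^2(-\infty,x_0)$, as desired. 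Note that, in contrast with the behaviour at $+\infty$, this argument needs no information on the pointwise decay of $u$ (which is unavailable here because the asymptotic matrix at $-\infty$ is non-hyperbolic and the Gap Lemma does not apply); all the weight of the proof is transferred onto $\Theta$ and the parameter $\lambda$ plays no role.

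The boundedness of $\Theta$ itself is where hypothesis \eqref{condaNd} enters. From \eqref{theta} one computes $\theta_x=a-\tfrac{c}{2D(\varphi)}$. Since the front is of type \textbf{[Nd]}, $\varphi(x)\in(0,\alpha)$ for all $x$, and by monotonicity of $D$ in \eqref{hypD} we have $0<D(\varphi(x))<D(\alpha)$, whence $\tfrac{c}{2D(\varphi)}>\tfrac{c}{2D(\alpha)}>a$ precisely because $a<c/(2D(\alpha))$. Therefore $\theta_x<0$ for all $x\in\R$; as $\theta(x_0)=0$, the function $\theta$ is strictly decreasing and positive on $(-\infty,x_0)$, so $\Theta=e^{-\theta}\in(0,1)$ there. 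This gives $\|\Theta\|_{L^\infty(-\infty,x_0)}\le 1$ at once, and is the one genuinely indispensable use of condition \eqref{condaNd}; it also keeps the subsequent bookkeeping clean.

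The hard part is then the boundedness of $\Theta_x=\big(\tfrac{c}{2D(\varphi)}-a\big)\Theta$ and of $\Theta_{xx}=\big(\theta_x^2-\theta_{xx}\big)\Theta$, where $\theta_{xx}=\tfrac{cD'(\varphi)\varphi_x}{2D(\varphi)^2}$. The difficulty is the competition between the degeneracy of the diffusion, which makes the prefactors blow up, and the decay of $\Theta$. Using Lemma \ref{lemdecayNd}, as $x\to-\infty$ one has $\varphi,\varphi_x=O(e^{\beta x})$ with $\beta=|f'(0)|/c>0$ and $D(\varphi)\sim D'(0)\varphi$, so that, setting $t:=e^{-\beta x}\to+\infty$, the prefactors of $\Theta_x$ and $\Theta_{xx}$ grow only algebraically, like $t$ and $t^2$ respectively. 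On the other hand, the divergence of $\int^x_{x_0}D(\varphi)^{-1}\,dy$ as $x\to-\infty$ forces $\theta(x)\to+\infty$ at a linear rate in $t$, say $\theta\sim\kappa t$ with $\kappa>0$, so that $\Theta=e^{-\theta}$ decays super-exponentially in $t$ and $t^k\Theta\to 0$ for every $k\ge 0$. This shows $\Theta_x,\Theta_{xx}\to 0$ as $x\to-\infty$, and since both are continuous on the closed ray $(-\infty,x_0]$ they are bounded there, completing the reduction of the first paragraph. I expect this last estimate, reconciling the blow-up coming from $D(\varphi)\to 0$ with the super-exponential smallness of the weight, to be the main technical obstacle.
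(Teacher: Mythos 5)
Your proof is correct and follows essentially the same route as the paper's: condition \eqref{condaNd} makes $\Theta$ bounded by $1$ on $(-\infty,x_0]$, the degeneracy forces $\theta$ to grow like $e^{-|f'(0)|x/c}$ so that $\Theta$ decays double-exponentially, and this beats the at-most-exponential growth of the prefactors in $\Theta_x$ and $\Theta_{xx}$, so that $w=\Theta u\in H^2(-\infty,x_0)$ follows from $u\in H^2$ and the product rule, exactly as in the paper. The only point to make explicit is that bounding the prefactors $\tfrac{c}{2D(\varphi)}-a$ and $\theta_x^2-\theta_{xx}$ by powers of $t=e^{-\beta x}$ requires the two-sided asymptotics $\varphi\sim C_2e^{\beta x}$, $\varphi_x\sim C_3e^{\beta x}$ of \eqref{asintotaNd2} (i.e.\ a lower bound on $\varphi$, hence on $D(\varphi)$), not merely the upper bound $\varphi,\varphi_x=O(e^{\beta x})$ you quote; this is precisely what Lemma \ref{lemdecayNd} supplies and what the paper's proof invokes at the same step.
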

\begin{proof}
We know that $D(\alpha) > D(\varphi(x)) > 0$ for all $x \in \R$. Therefore, condition \eqref{condaNd} clearly implies that
\[
\int_{x_0}^x \Big( \frac{c}{2 D(\varphi(y))} - a\Big) \, dy < 0,
\]
if $x < 0$, inasmuch as $c/2D(\varphi) - a > 0$. This shows that $0 < \Theta(x) < 1$ if $x < x_0$ for $x_0$ fixed but arbitrary, which includes the case when $x \to -\infty$. Therefore, 
\[
|w(x)|^2 = |\Theta(x)|^2 |u(x)|^2 \leq |u(x)|^2, \quad \text{for all } x < x_0,
\]
yielding $w \in L^2(-\infty, x_0)$. Now, $w_x = \Theta_x u + \Theta u_x$. Since $u \in H^2$ it is then clear that $\Theta u_x \in L^2(-\infty, x_0)$. Let us now compute
\[
\Theta_x = \Big( \frac{c}{2 D(\varphi(x))} - a\Big) \Theta(x).
\]
On the degenerate side $D(\varphi)$ decays to zero as $\varphi \to 0^+$. Moreover, we have proved that $\varphi \to 0^+$ as $x \to -\infty$ at rate $O(\exp (|f'(0)|x/c))$ (see Lemma \ref{lemdecayNd}). Therefore, we make expansions around $\varphi = 0$ of the form
\begin{equation}
\label{Dexp}
\begin{aligned}
D(\varphi) &= D'(0) \varphi + O(\varphi^2),\\
\frac{1}{D(\varphi)} &=\frac{1}{D'(0) \varphi} + O(1), 
\end{aligned}
\qquad \text{as } \; \varphi \to 0^+.
\end{equation}
Hence, from $0 < \varphi(x) \leq C \exp (|f'(0)|x/c)$ when $x \to -\infty$ and some $C > 0$ we obtain the bound
\[
\int_{x_0}^x \frac{dy}{\varphi(y)} \leq C_1 \int_{x_0}^x \exp \big(- |f'(0)|y/c \big) \, dy,
\]
for $x < x_0 < 0$, $|x_0| \gg 1$ large enough and some uniform $C_1 > 0$. Substituting the expansion for $D(\varphi)^{-1}$ for $x < x_0 < 0$ with $|x_0| \gg 1$ we arrive at
\[
\begin{aligned}
\frac{c}{2} \int_{x_0}^x \frac{dy}{D(\varphi(y))} - a(x - x_0) &\leq \frac{c}{2} \int_{x_0}^x \Big( \frac{1}{D'(0) \varphi(y)} + C_2 \Big) \, dy  - a(x-x_0)\\
&\leq \frac{c C_1}{2 D'(0)} \int_{x_0}^x \exp \big( - |f'(0)|y/c \big) \, dy + C_3(x-x_0)\\
&= \frac{c^2 C_1}{2 D'(0) |f'(0)|} \big( e^{-|f'(0)|x_0/c} - e^{-|f'(0)|x/c}\big) - C_3 |x-x_0|,
\end{aligned}
\]
for all $x < x_0 \ll -1$ and with uniform constants $C_j > 0$. This implies that
\[
0 < \Theta(x) \leq \exp \Big( - C_4 e^{-|f'(0)|x_0/c} - C_3 |x-x_0|\Big) \leq \exp \Big( - C_4 e^{-|f'(0)|x_0/c}\Big) \to 0,
\]
as $x \to -\infty$ for some $C_4 > 0$.

Now, in view that $\varphi \sim C_0 \exp (|f'(0)|x/c)$ asymptotically as $x \to -\infty$ (see \eqref{asintotaNd}), use
\[
\frac{1}{C_0} \varphi(x) e^{-|f'(0)|x/c} \to 1, \quad \text{as } \; x \to - \infty,
\]
and the expansion \eqref{Dexp} to show that
\[
|\Theta_x| \leq \left| \frac{c}{2 D'(0) \varphi(x)} + a \right| \exp \Big( - C_4 e^{-|f'(0)|x/c} \Big) \leq \frac{C e^{-|f'(0)|x/c}}{\exp \big( C_4 e^{-|f'(0)|x/c}\big)}  \to 0,
\]
as $x \to -\infty$. Therefore, we readily conclude that $w_x = \Theta_x u + \Theta u_x \in L^2(-\infty, x_0)$ for $x_0 \ll -1$.

In a similar fashion it can be proved that $\Theta_x D(\varphi)^{-1}$ and $\varphi_x \Theta D(\varphi)^{-2}$ are bounded above as $x \to -\infty$, due to the fast decay of $\Theta = O(\exp( - C e^{-kx}))$ as $x \to -\infty$, with appropriate constants $C, k > 0$. Details are left to the reader. This analysis shows, in turn, that $w_{xx} = \Theta_{xx} u + 2 \Theta_x u_x + \Theta u_{xx} \in L^2(-\infty, x_0)$ as well. We conclude that $w \in H^2(-\infty, x_0)$, as claimed.
\end{proof}

\subsection{Point spectral stability}
\label{secptspNd}

The decay estimates of Section \ref{secdecaystr}, as well as the establishment of inequality \eqref{condaNd} for the exponential weight, are the key ingredients to show that the point spectrum of the conjugated operator is stable. First we verify that for any $a \in \R$ satisfying \eqref{condaNd} the eigenfunction of the linearized operator around the wave associated to the zero eigenvalue transforms, under multiplication by $e^{ax}$, into an eigenfunction for the conjugated operator with same eigenvalue.

\begin{lemma}
\label{lemzeroefNd}
Let $\varphi$ be a degenerate, monotone Nagumo front traveling with speed $c > \bbc(\alpha)$ (front of type I in Proposition \ref{propstructure}), and let $a \in \R$ be such that condition \eqref{condaNd} holds. Then the function
\[
\phi(x) := e^{ax} \varphi_x(x), \qquad x \in \R,
\]
belongs to $H^2(\R)$. Moreover, $\lambda = 0 \in \ptsp(\cL_a)_{|L^2}$ with associated eigenfunction $\phi \in \ker (\cL_a) \subset H^2$.
\end{lemma}
\begin{proof}
On the non-degenerate side as $\varphi \to \alpha$ when $x \to \infty$, the front and its derivatives decay asymptotically as (see Lemmata \ref{lemdecayNd} and \ref{lemC4})
\[
|\partial_x^j(\varphi - \alpha) | \leq C e^{- a_2(\alpha)x}, 
\]
for $j = 0,1,2,3$. By hypothesis, $a \in \R$ satisfies \eqref{condaNd}. Thus, $a > a(\alpha)$ and then it is clear that
\[
\begin{aligned}
\phi &=  e^{ax} \varphi_x,\\
\phi_x &= e^{ax} ( a \varphi_x + \varphi_{xx}),\\
\phi_{xx} &= e^{ax} (a^2 \varphi_x + 2a \varphi_{xx} + \varphi_{xxx}),
\end{aligned}
\]
decay exponentially to zero when $x \to \infty$ with rate $|\partial_x^j \phi | \leq Ce^{- \nu x}$ for all $j$ and where $\nu := a_2(\alpha) - a > 0$.

On the degenerate side as $\varphi \to 0$ when $x \to -\infty$ the front and its derivatives decay exponentially as (see Lemma \ref{lemdecayNd})
\[
|\partial_x^j \varphi | \leq C e^{|f'(0)|x/c},
\]
for $j = 0,1,2,3.$ Thus, since $a > 0$ it is then straightforward to observe that $\phi$ and its derivatives decay exponentially as well, in view that
\[
|e^{ax}\varphi_x|, \, |e^{ax}\varphi_{xx}|, \, |e^{ax}\varphi_{xxx}| \leq C e^{|f'(0)|x/c} \to 0,
\]
as $x \to -\infty$. Since the front is at least of class $C^4$, the exponential decay of $\phi$ and its derivatives at $x = \pm \infty$ is sufficient to conclude that $\phi \in H^2(\R)$.

Finally, notice that $\lambda = 0 \in \ptsp(\cL_a)$ with associated eigenfunction $\phi \in H^2$ because $\cL_a \phi = e^{ax} \cL e^{-ax} \phi = e^{ax} \cL \varphi_x = 0$. This proves the lemma.
\end{proof}

\begin{theorem}[point spectral stability]
\label{thmsptNd}
Let $\varphi$ be a diffusion-degenerate monotone Nagumo front traveling with speed $c > \bbc(\alpha)$ (of type I in Proposition \ref{propstructure}). If $a \in \R$ satisfies inequality \eqref{condaNd} then the conjugated linearized operator around the wave is point spectrally stable in $L^2$. More precisely,
\[
\ptsp(\cL_a)_{|L^2} \subset (-\infty,0].
\] 
\end{theorem}
\begin{proof}
By definition of $\ptsp$ (see Definition \ref{defspecd}), if $\lambda \in \ptsp(\cL_a)_{|L^2}$ then there exists $u \in H^2$ such that $(\cL_a - \lambda) u = 0$. Now let $\lambda \in \ptsp(\cL_a)_{|L^2} \cap \Omega(a)$. For each $x \in \R$ let us define $w = w(x)$ as in \eqref{defofchangew}, where $u \in H^2$ is a solution to the spectral equation. Pick $x_0 \in \R$, $x_0 \gg 1$ sufficiently large. Hence, if $a \in \R$ satisfies condition \eqref{condaNd} then we apply Lemmata \ref{lemwgoodNdminus} and \ref{lemwgoodNdplus} to reckon
\[
\begin{aligned}
w(x) &= \exp \left( \frac{c}{2} \int_{x_0}^x \frac{dy}{D(\varphi(y))} - a(x-x_0) \right) u(x) \in H^2(x_0, \infty),\\
&= C_0 \exp \left( \int_{-x_0}^x \left( \frac{c}{2 D(\varphi(y))} - a \right) dy \right) u(x) \in H^2(-\infty, -x_0).
\end{aligned}
\]

Since for any finite $x_0 \in \R$ we clearly have $w \in H^2(-x_0, x_0)$, we conclude that $w \in H^2(\R)$ and Assumption \ref{assumcrucial} holds. Moreover, it is to be observed that under condition \eqref{condaNd} we have
\[
\Omega(a) = \{ \Re \lambda > - \mu_0 \},
\]
where $\mu_0 > 0$ (see Lemma \ref{lemsdNd}). Therefore, $0 \in \Omega(a)$. Also thanks to Lemma \ref{lemzeroefNd}, $\phi = e^a \varphi_x$ is the eigenfunction of $\cL_a$ associated to the zero eigenvalue, and Assumption \ref{assumcrucial} is also valid for $\psi = e^{-\theta} \phi$. Henceforth, all the hypotheses of Lemma \ref{lembee} are satisfied. We conclude that for each $\lambda \in \ptsp(\cL_a)_{|L^2} \cap \Omega(a)$ the basic energy estimate \eqref{basicee} holds yielding $\ptsp(\cL_a)_{L^2} \subset (-\infty,0]$, as claimed.
\end{proof}

\subsection{Location of $\sigma_\pi(\cL_a)_{|L^2}$}
\label{seclocapp}

In this section we locate the subset of the approximate spectrum, namely $\sigma_\pi(\cL_a)_{|L^2}$ in the stable half plane. For a given $a \in \R$ satisfying \eqref{condaNd} let us write the conjugated operator \eqref{conjopL}, $\cL_a : \cD(\cL_a) = H^2 \subset L^2 \to L^2$, as
\[
\cL_a u = b_2(x) u_{xx} + b_1(x) u_x + b_0(x) u, \qquad u \in H^2,
\]
where
\begin{align*}
b_2(x) &= D(\varphi),\\b_1(x) &= D(\varphi)_x - 2aD(\varphi) + c,\\
b_0(x) &= a^2 D(\varphi) - 2a D(\varphi)_x - ac + D(\varphi)_{xx} + f'(\varphi).
\end{align*}
Let us denote,
\[
b_j^\pm := \lim_{x \to \pm \infty} b_j(x), \qquad j = 0,1,2,
\]
yielding
\[
\begin{aligned}
b_2^+ &= D(\alpha), &  b_2^- &= 0,\\
b_1^+ &= c - 2 a D(\alpha), &  b_1^- &= c,\\
b_0^+ &= a^2 D(\alpha) - ac + f'(\alpha), &  b_0^- &= f'(1) - ac.\\
\end{aligned}
\]

\begin{lemma}
\label{lemsigmapiNd}
Suppose that $a \in \R$ satisfies condition \eqref{condaNd}. Then the $\sigma_{\pi}(\cL_a)_{|L^2}$ spectrum of the linearized operator around a degenerate Nagumo front of type I as in Proposition \ref{propstructure}, is stable:
  \begin{equation*}
    \sigma_{\pi}(\cL_a)_{|L^2} \subset \{ \lambda \in \C \, : \, \Re \lambda \leq 0 \}.
  \end{equation*}
\end{lemma}
\begin{proof}
Let $\lambda \in \sigma_{\pi}(\cL_a)_{|L^2}$ be fixed. Then, from Definition \ref{defspecd} we know that the range of $\cL_a - \lambda$ is not closed and there exists a singular sequence $u_n \in \cD(\cL) = H^2$ with $\| u_n \|_{L^2} = 1$, for all $n \in \N$, such that $(\cL_a - \lambda) u_n \to 0$ in $L^2$ as $n \to \infty$. Since $L^2$ is a reflexive space, this sequence can be chosen so that $u_n \rightharpoonup 0$ in $L^2$ (see Remark \ref{remponla}). 

We claim that there exists a subsequence, which we still denote by $u_n$, such that $u_n \to 0$ in $L^2_\textrm{loc}$ as $n \to \infty$. Indeed, fix an open bounded interval $I$ and let $f_n := (\cL_a - \lambda)u_n$. Take the complex $L^2$-product of $(\cL_a - \lambda)u_n = f_n$ with $u_n$, integrate by parts and take the real part. The result is
\begin{equation}
 \label{Ndlambda}
\Re \lambda = - \Re \langle f_n, u_n \rangle_{L^2} - \langle b_2(x) \partial_x u_n, \partial_x u_n \rangle_{L^2} + \langle (b_0(x) - \tfrac{1}{2} \partial_x b_1(x)) u_n, u_n \rangle_{L^2}.
\end{equation}
This equation together with the hypothesis on $u_n$, $f_n$, $\lambda$ fixed and the strict positivity of $b_2 = D(\varphi)$ on $I$, imply that $u_n$ is bounded in the space $H^1(I)$. Therefore, by the Rellich-Kondrachov theorem there exists a subsequence such that $u_n \to 0$ in $L^2(I)$. By using a standard diagonal argument in increasing intervals, we arrive at a subsequence $u_n$ such that $u_n \to 0$ in $L^2_\textrm{loc}$ as $n \to +\infty$.

Now, since
\[
\begin{aligned}
b_0(x) - \tfrac{1}{2} \partial_x b_1(x) &= a^2 D(\varphi) - a D(\varphi)_x - ac + \tfrac{1}{2} D(\varphi)_{xx} + f'(\varphi) \\& \to a^2 D(u_\pm) -ac + f'(u_\pm),
\end{aligned}
\]
as $x \to \pm \infty$, thanks to the fact that $a \in \R$ satisfies \eqref{condaNd} we can choose $R>0$ sufficiently large such that
  \begin{equation*}
    b_0(x) - \tfrac{1}{2} \partial_x b_1(x)  = \tfrac{1}{2} D(\varphi)_{xx} + f'(\varphi) < 0 \quad \text{for } |x| \ge R.
  \end{equation*}
Hence, from $b_2(x) = D(\varphi) \geq 0$ and \eqref{Ndlambda} we get
\[
\begin{aligned}
\Re \lambda &\leq |\langle f_n, u_n \rangle_{L^2}| + \int_{-R}^R \!(b_0(x) - \tfrac{1}{2} \partial_x b_1(x)) |u_n|^2 \, dx + \int_{|x|\geq R} \!\!(b_0(x) - \tfrac{1}{2} \partial_x b_1(x)) |u_n|^2 \, dx \\
&\leq \| (\cL_a - \lambda) u_n \|_{L^2} + C \| u_n \|_{L^2(-R,R)} \, \to 0,
\end{aligned}
\]
as $n \to \infty$, thanks to boundedness of the coefficients, $\| u_n \|_{L^2} = 1$, $(\cL_a - \lambda) u_n \to 0$ in $L^2$ and to the convergence of $u_n$ to zero in $L^2_\textrm{loc}$. The lemma is proved.
\end{proof}

\subsection{Proof of Main Theorem \ref{mainthmNd}}
Under the hypotheses of Theorem \ref{mainthmNd}, choose any $a \in \R$ such that condition \eqref{condaNd} holds. More precisely,
\[
0 < a_1(\alpha) < a < \frac{c}{2D(\alpha)} < a_2(\alpha).
\]
Then apply Theorem \ref{thmsptNd} and Lemmata \ref{lemdspecloc} and \ref{lemsigmapiNd} in order to obtain
\[
\begin{aligned}
\sigma(\cL)_{|L^2_a} = \sigma(\cL_a)_{|L^2} = \ptsp(\cL_a)_{|L^2} \cup \sigma_\delta(\cL_a)_{|L^2} \cup \sigma_\pi(\cL_a)_{|L^2} \subset \{ \lambda \in \C \, : \, \Re \lambda \leq 0\},
\end{aligned}
\]
as claimed. This proves the Theorem.
\qed

\section*{Acknowledgements}

We are grateful to an anonymous referee for her/his valuable comments which significantly improved both the quality and the scope of the paper. We also thank Raffaele Folino for useful discussions. The work of L. F. L\'opez R\'{\i}os was supported by a Postdoctoral Fellowship from DGAPA-UNAM. R. G. Plaza was partially supported by DGAPA-UNAM, program PAPIIT, grant IN-100318.

%

\def\cprime{$'$}

\end{document}